\theoremstyle{plain}
\newtheorem{thm}{Theorem}[section]
\newtheorem{lem}[thm]{Lemma}
\newtheorem{prop}[thm]{Proposition}
\newtheorem{rem}[thm]{Remark}
\newtheorem{prob}[thm]{Problem}
\theoremstyle{definition}
\newtheorem{defn}{Definition}[section]
\newtheorem{case}{Case}
\newtheorem*{Th1.1}{Theorem 1.1}
\newtheorem*{Th1.2}{Theorem 1.2}
\newtheorem*{case1}{Case 1}
\newtheorem*{case2}{Case 2}
\newtheorem*{case1.1}{Case 1.1}
\newtheorem*{case2.1}{Case 2.1}
\newtheorem*{case1.2}{Case 1.2}
\newtheorem*{case2.2}{Case 2.2}
\def\no{\noindent}
\def\bs{\bigskip}
\begin{document}
\title{{\bf {The norms for symmetric and antisymmetric tensor products of the weighted shift operators }}
}
\author{{\normalsize Xiance Tian,    Penghui Wang  and   Zeyou Zhu } \\
{ School of Mathematics, Shandong University,} {\normalsize Jinan, 250100, China} }

\date{}
\maketitle

\begin{abstract}
In the present paper,  we study the norms for symmetric and antisymmetric tensor products of weighted shift operators. By proving  that for $n\geq 2$,
	$$\|S_{\alpha}^{l_1}\odot\cdots \odot S_{\alpha}^{l_k}\odot S_{\alpha}^{*l_{k+1}}\odot\cdots \odot S_{\alpha}^{*l_{n}}\|  =\mathop{\prod}_{i=1}^n\left \| S_{\alpha}^{{l_{i}}}\right\|,  \text{ for any} \ (l_1,l_2\cdots l_n)\in\mathbb N^n$$ if and only if the weight satisfies the regularity condition, we partially solve  \cite[Problem 6 and Problem 7]{GA}.
It will be seen that many weighted shift operators on function spaces, including weighted Bergman shift, Hardy shift, etc, satisfy the regularity condition. Moreover, at the end of the paper, we solve \cite[Problem 1 and Problem 2]{GA}. 
\end{abstract}

\bs

\footnotetext{2020 AMS Subject Classification: primary 47A30; secondary 47A80.}

\bs

\no{\bf Key Words:} Symmetric tensor products,  antisymmetric tensor products,  weighted shift operators.

\numberwithin{equation}{section}
\newtheorem{theorem}{Theorem}[section]
\newtheorem{lemma}[theorem]{Lemma}
\newtheorem{proposition}[theorem]{Proposition}
\newtheorem{corollary}[theorem]{Corollary}

\section{Introduction}
\indent\indent

Symmetric tensor products and antisymmetric tensor products are essential in various fields. 
For instance, symmetric tensors  are foundational to general relativity \cite{in4}. Additionally, symmetric tensor products play  significant roles in multilinear algebra \cite{in5}, representation theory \cite{in6} and statistics \cite{in13}.
Moreover, decomposing a symmetric tensor into a minimal linear combination of tensor powers of
the same vector arises in mobile communications, machine learning, factor analysis of $k$-way
 arrays, biomedical engineering, psychometrics, and chemometrics \cite{in7,in8,in9,in10,in11}.
 Furthermore, the symmetric part of a quantum geometric tensor has been utilized as a tool to detect quantum phase transitions in PT-symmetric quantum mechanics \cite{in18}.
In particular, symmetric tensor products of operators are crucial in the study of non-interacting systems \cite{in14} and are actively explored within the physics community \cite{in15,in16,in17}.

In the present paper, we will study the norm of symmetric and antisymmetric tensor product of operators. 
Let $\mathcal{H}$ be a Hilbert space and $\mathcal{B}(\mathcal{H})$ be the space of bounded linear operators on $\mathcal{H}$. Let $\mathcal{H}^{\otimes n}$ be the full tensor space of $\mathcal{H}$, $\mathcal{H}^{\odot n}$ be the symmetric tensor space of $\mathcal{H}$ and $\mathcal{H}^{\wedge n}$ be the antisymmetric tensor space of $\mathcal{H}$, as introduced in \cite[Page 106]{QU}. For $\mathbf{v}_1, \mathbf{v}_2, \cdots, \mathbf{v}_n \in \mathcal{H}$, let $\mathbf{v}_1 \odot \mathbf{v}_2 \odot \cdots \odot \mathbf{v}_n$ be the symmetric tensor and $\mathbf{v}_1 \wedge \mathbf{v}_2 \wedge \cdots \wedge \mathbf{v}_n$ be the antisymmetric tensor of $\mathbf{v}_1, \mathbf{v}_2, \cdots, \mathbf{v}_n$.
For $\mathbb{A}_{1},\mathbb{A}_{2},\cdots,\mathbb{A}_{n} \in \mathcal{B}(\mathcal{H})$, we define $\mathbb{A}_{1}\otimes\mathbb{A}_{2}\otimes\cdots\otimes\mathbb{A}_{n}$ on $\mathcal{H}^{\otimes n}$ as
$$
\left(\mathbb{A}_{1}\otimes\mathbb{A}_{2}\otimes\cdots\otimes\mathbb{A}_{n}\right)\left(\mathbf{v}_1 \otimes \mathbf{v}_2 \otimes \cdots \otimes \mathbf{v}_n\right)=\mathbb{A}_{1} \mathbf{v}_1 \otimes \mathbb{A}_{2} \mathbf{v}_2 \otimes \cdots \otimes \mathbb{A}_{n} \mathbf{v}_n.
$$
Then it is not difficult to see that $\mathcal{H}^{\odot n}$ and $\mathcal{H}^{\wedge n}$ are invariant under the action of
$$
\mathrm{S}_n\left(\mathbb{A}_{1},\mathbb{A}_{2},\cdots,\mathbb{A}_{n}\right)=\frac{1}{n!} \sum_{\pi \in \Sigma_n}\left(\mathbb{A}_{\pi(1)} \otimes \mathbb{A}_{\pi(2)} \otimes \cdots \otimes \mathbb{A}_{\pi(n)}\right) \in \mathcal{B}\left(\mathcal{H}^{\otimes n}\right),
$$
where $\Sigma_{n}$ is the group of permutations of $\left \{ 1,2,\cdots ,n \right \} $. Set
$$\mathbb{A}_{1} \odot \mathbb{A}_{2} \odot \cdots \odot \mathbb{A}_{n}=\mathrm{S}_{n}\left(\mathbb{A}_{1}, \mathbb{A}_{2}, \cdots, \mathbb{A}_{n}\right)|_{\mathcal{H}^{\odot n}},$$
and
$$\mathbb{A}_{1} \wedge \mathbb{A}_{2} \wedge \cdots \wedge \mathbb{A}_{n}=\mathrm{S}_{n}\left(\mathbb{A}_{1}, \mathbb{A}_{2}, \cdots, \mathbb{A}_{n}\right)|_{\mathcal{H}^{\wedge n}}.$$

By \cite[Page 162]{BP} and \cite[(3.8.17)]{BSI}, $$\sigma(\mathbb{A}_1\otimes \mathbb{A}_2\otimes\cdots\otimes\mathbb A_n)=\sigma(\mathbb{A}_1)\cdots\sigma(\mathbb{A}_n)$$
 and
$$
\|\mathbb{A}_1\otimes \mathbb{A}_2\otimes\cdots\otimes\mathbb A_n\|=\prod\limits_{i=1}^n \|\mathbb A_i\|.
$$
However the spectra and norms of symmetric tensor products and antisymmetric tensor products of bounded operators are much more complicated. For example, in \cite[Proposition 7.2]{GA}, Garcia, O'Loughlin and Yu estimated the norms of symmetric tensor product of diagonal operators $\mathbb A_1$ and $\mathbb A_2$, and the sharpness of the following estimates was obtained: 
$$
(\sqrt{2}-1)\|\mathbb A_1\|\cdot\|\mathbb A_2\|\leq\left\|\mathbb A_1\odot \mathbb A_2\right\|\leq\|\mathbb A_1\|\cdot\|\mathbb A_2\|;
$$
moreover, they raised a series of problems aimed at exploring the norms and spectra of symmetric and antisymmetric tensor product of operators. To continue, let $\{e_i\}_{i=0}^\infty$ be an orthonormal basis(ONB for short) of a complex Hilbert space $\cal H$.
 {\it In what follows, we always assume that the weight  $\alpha=\{\alpha_i\}_{i=0}^\infty$ is  convergent}. 
Let \( S_{\alpha} \) be a weighted shift operator on \(\mathcal{H}\) with weights \(\alpha\), defined by 
    \[
    S_{\alpha} e_{i} = \alpha_{i} e_{i+1} \quad \text{for } i \ge 0,
    \]
    which has been extensively studied in \cite{Shields}. Let \( S_{\alpha}^{*} \) denote its adjoint operator.  \emph{In particular, \( S \) is the forward shift operator with the constant weight \(\alpha_i = 1\) for \( i \in \mathbb{N} \).}
    
    The following problems were raised in \cite{GA}.

\begin{prob}\label{prob1}(\cite[Problem 6]{GA})
	Identify the norm and spectrum of arbitrary symmetric or antisymmetric tensor products of $S$ and $S^{*}$(for example, consider $S^2 \odot S \odot S^{*3} $ and $S^2 \wedge S \wedge S^{*3} $).
\end{prob}
\begin{prob}\label{prob2}(\cite[Problem 7]{GA})
	Describe the norm and spectrum of $S_{\alpha } \odot S_{\alpha }^{*}$ and $S_{\alpha } \wedge S_{\alpha }^{*}$, in which $S_{\alpha }$ is a weighted shift operator. What can be said if more factors are included?
\end{prob}

We note that Yang and Zhang \cite{YZ} addressed spectral-related aspects of
Problem \ref{prob2} very recently. In this paper, we partially solve these problems. To clarify our main results, we need the assumption of the so-called regularity on the weight.

\begin{defn}\label{def:regularity}
(regularity): The weight $\alpha$ is called to be regular if 
 $$\lim\limits_{i \to \infty } \left | \alpha_{i}  \right | \ge \left | \alpha_{m}  \right |, \text{ for any } m \in \mathbb{N}. $$ 
\end{defn}
The following theorem is the main result in the present paper, which partially gives an affirmative answer to Problem 1.1 and Problem 1.2.
\begin{thm}\label{thm1.1}
 Let $S_\alpha$ be a weighted shift operator with the convergent weight. Then
 the following statements are equivalent.
\begin{itemize}
     \item[(1)] $\left \{ \alpha _{i}  \right \} _{i=0}^{\infty }$ satisfies the regularity condition;
     \item[(2)]
 for every $ n \ge 2$ and every fixed $\left ( l_{1},  l_{2},\cdots,l_{n} \right )\in \mathbb{N}^{n}$,
	$$\|S_{\alpha}^{l_1}\odot\cdots \odot S_{\alpha}^{l_k}\odot S_{\alpha}^{*l_{k+1}}\odot\cdots \odot S_{\alpha}^{*l_{n}}\|  =\mathop{\prod}_{i=1}^n\left \| S_{\alpha}^{{l_{i}}}\right\|;$$
     \item[(3)]for every $ n \ge 2$ and every fixed $\left ( l_{1},  l_{2},\cdots,l_{n} \right )\in \mathbb{N}^{n}$,
	$$\|S_{\alpha}^{l_1}\wedge\cdots \wedge S_{\alpha}^{l_k}\wedge S_{\alpha}^{*l_{k+1}}\wedge\cdots \wedge S_{\alpha}^{*l_{n}}\|=\mathop{\prod}_{i=1}^n\left \| S_{\alpha}^{{l_{i}}}\right\|.$$
\end{itemize}
In this case, set $\lambda =\lim\limits_{i \to \infty}\left | \alpha_{i}  \right |,$ $$\|S_{\alpha}^{l_1}\odot\cdots \odot S_{\alpha}^{l_k}\odot S_{\alpha}^{*l_{k+1}}\odot\cdots \odot S_{\alpha}^{*l_{n}}\|=\|S_{\alpha}^{l_1}\wedge\cdots \wedge S_{\alpha}^{l_k}\wedge S_{\alpha}^{*l_{k+1}}\wedge\cdots \wedge S_{\alpha}^{*l_{n}}\|=\lambda ^{  l_{1}  +  l_{2}  + \cdots + l_{n} }.$$
\end{thm}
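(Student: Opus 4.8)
The plan is to separate two routine ingredients from two substantial ones. First I record that $S_\alpha^{l}e_i=\big(\prod_{j=0}^{l-1}\alpha_{i+j}\big)e_{i+l}$, so $\|S_\alpha^{l}\|=\sup_i\big|\prod_{j=0}^{l-1}\alpha_{i+j}\big|$; under regularity one has $\sup_i|\alpha_i|=\lambda=\lim_i|\alpha_i|$, every factor is at most $\lambda$, and the product tends to $\lambda^{l}$ as $i\to\infty$, whence $\|S_\alpha^{l}\|=\lambda^{l}$ and $\prod_{i=1}^n\|S_\alpha^{l_i}\|=\lambda^{\sum l_i}$. This already yields the closed form in the last display once the norm equalities are established. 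Next, the inequality $\|S_\alpha^{l_1}\odot\cdots\odot S_\alpha^{*l_n}\|\le\prod_i\|S_\alpha^{l_i}\|$ (and the same for $\wedge$) is free: it follows from the triangle inequality applied to $\mathrm S_n$, the identity $\|\mathbb A_{\pi(1)}\otimes\cdots\otimes\mathbb A_{\pi(n)}\|=\prod_i\|\mathbb A_i\|$, and the fact that restricting to an invariant subspace cannot increase the norm. Hence the real content is (i) a matching lower bound under regularity, giving $(1)\Rightarrow(2),(3)$, and (ii) the necessity of regularity, i.e.\ $(2)\Rightarrow(1)$ and $(3)\Rightarrow(1)$.

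For the lower bound I would pass to a ``tail model''. Since $|\alpha_i|\to\lambda$, on the span of $\{e_i:i\ge N\}$ the operator $S_\alpha$ differs from $\lambda$ times the unilateral shift by something of norm $o(1)$ as $N\to\infty$; comparing with the bilateral shift $U$ on $\ell^2(\mathbb Z)$, the operator $S_\alpha^{l_1}\odot\cdots\odot S_\alpha^{*l_n}$ is, up to an error tending to $0$, equal to $\lambda^{\sum l_i}$ times the average $\frac1{n!}\sum_{\pi\in\Sigma_n}$ of tensor products of the \emph{unitaries} $U^{\pm l_j}$, restricted to the symmetric (resp.\ antisymmetric) subspace. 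The key point, and the main obstacle, is that single basis tensors $e_{p_1}\odot\cdots\odot e_{p_n}$ do \emph{not} norm this average: symmetrization splits the mass and costs a combinatorial factor, so they fall strictly short of the product (already $S\odot S^{2}$ exhibits the loss). Instead I would use coherent ``wave-packet'' test vectors, i.e.\ slowly varying unit sequences supported far out, on which every shift $U^{\pm l_j}$ acts as the near-identity; then all $n!$ summands of the average act coherently and the averaged operator has norm tending to $1$, so the weighted norm tends to $\lambda^{\sum l_i}$. One must check that such packets can be placed inside the symmetric (resp.\ antisymmetric) subspace and that the half-line boundary and the weight fluctuations contribute only $o(1)$; the antisymmetric case is identical once the signs are tracked, as they do not affect the modulus of the coherent contribution.

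For necessity I argue by contraposition, assuming regularity fails, so $M:=\sup_i|\alpha_i|>\lambda$; since $|\alpha_i|\to\lambda$ the value $M$ is attained and only finitely many weights lie near it. The antisymmetric case is clean: with all $l_i=1$ one computes $\|S_\alpha\wedge\cdots\wedge S_\alpha\|=\sup_{i_1<\cdots<i_n}\prod_t|\alpha_{i_t}|=\beta_1\cdots\beta_n$, the product of the $n$ largest weight moduli $\beta_1\ge\beta_2\ge\cdots$. Because $\beta_k\downarrow\lambda<\beta_1=M$, there is a largest $r$ with $\beta_r=M$, and then for $n=r+1$ one gets $\beta_1\cdots\beta_{r+1}=M^{r}\beta_{r+1}<M^{r+1}=\|S_\alpha\|^{r+1}$, so $(3)$ fails. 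The symmetric case is more delicate because pure powers are \emph{blind} to the defect: the stacked vector $e_m^{\odot n}$ at a maximal index $m$ already attains $|\alpha_m|^n=M^n=\|S_\alpha\|^n$. To break $(2)$ I would therefore use a mixed product such as $S_\alpha\odot S_\alpha^{*}$: the forward factor is optimal on a vector sitting just below a maximal step while the adjoint factor is optimal just above it, and the symmetrization of $S_\alpha\otimes S_\alpha^{*}$ with $S_\alpha^{*}\otimes S_\alpha$ prevents both from being realized on the same symmetric tensor, forcing $\|S_\alpha\odot S_\alpha^{*}\|<M^2=\|S_\alpha\|\,\|S_\alpha^{*}\|$. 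Establishing this strict deficit requires a genuine operator-norm upper bound (a localization estimate confining the near-$M$ action to the finitely many large weights), not merely the evaluation on test vectors; this is the subtle point of the converse. Together these implications close the equivalence $(1)\Leftrightarrow(2)\Leftrightarrow(3)$ and deliver the stated value $\lambda^{\sum l_i}$.
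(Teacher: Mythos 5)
Two of your three substantive steps are correct but take genuinely different routes from the paper. For the lower bound in $(1)\Rightarrow(2),(3)$, the paper evaluates the operator on the uniform superposition of all (anti)symmetrized basis vectors of a fixed total degree $k$ and shows, through a chain of counting lemmas (the sets $A_k$, $E_{r,m}$, $\tilde A$, $\check A$, the partition asymptotics $P(k,n)$, and their wedge analogues in Subsection 3.2), that the coherent ``full multiplicity'' targets dominate as $k\to\infty$. Your wave-packet argument replaces all of this: if $f$ is a slowly varying unit vector supported far out then, after the harmless normalization $\lim_i\alpha_i=\lambda$, both $S_\alpha^{l}f$ and $S_\alpha^{*l}f$ equal $\lambda^{l}f+o(1)$, so on $f^{\otimes n}$, which lies in $\mathcal H^{\odot n}$, all $n!$ summands of the symmetrization act coherently and the bound $\ge\lambda^{l_1+\cdots+l_n}$ follows; this is valid and considerably simpler than the paper's combinatorics. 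One detail you must make explicit: for the antisymmetric case a single packet is useless, since $f^{\otimes n}$ is orthogonal to $\mathcal H^{\wedge n}$; instead take $n$ slowly varying packets $f_1,\dots,f_n$ with disjoint far-out supports and test on $f_1\wedge\cdots\wedge f_n$, after which the same coherence argument runs. Likewise your proof of $(3)\Rightarrow(1)$ is complete, correct, and not the paper's: $S_\alpha\wedge\cdots\wedge S_\alpha$ is a weighted shift on the wedge basis with weights $\alpha_{i_1}\cdots\alpha_{i_n}$, its norm is the product of the $n$ largest weight moduli, and choosing $n=r+1$ with $r$ the (finite) multiplicity of the maximal modulus gives the strict inequality.

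The genuine gap is in $(2)\Rightarrow(1)$. Your observation that pure forward powers are blind to the defect is right (stacking gives $(S_\alpha^{l}\odot\cdots\odot S_\alpha^{l})e_m^{\odot n}=\beta_{m,l}^{\,n}e_{m+l}^{\odot n}$, hence always equality of norms), so a mixed product such as $S_\alpha\odot S_\alpha^{*}$ is unavoidable; but the decisive strict bound $\|S_\alpha\odot S_\alpha^{*}\|<M^2$ is only asserted with a heuristic, and you flag yourself that a genuine operator-norm upper bound is missing. Without it the cycle of implications does not close, so the proposal is incomplete at precisely its hardest point. The bound is true and can be proved along the lines you hint at: normalize $M=1$; since $\lim_i|\alpha_i|<1$, the set of pairs $(i,j)$ with $|\alpha_i\alpha_{j-1}|>1-\varepsilon$ is finite, so if $\|S_\alpha\odot S_\alpha^{*}\|=1$ a maximizing sequence of symmetric unit vectors for $A=S_\alpha\otimes S_\alpha^{*}$ concentrates on a fixed finite-dimensional subspace and yields, after passing to a norm limit, a symmetric unit vector $u=\sum_{i,j}u_{ij}\,e_i\otimes e_j$ supported where $|\alpha_i\alpha_{j-1}|=1$ and such that $Au$ is again symmetric with $\|Au\|=1$. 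Now the functional $\phi(w)=\sum_{i,j}(i-j)|w_{ij}|^2$ vanishes on every symmetric vector, while $A$ shifts indices by $(i,j)\mapsto(i+1,j-1)$ unimodularly on the support of $u$, so $\phi(Au)=\phi(u)+2\|u\|^2=2\neq0$, a contradiction. Some such localization-plus-obstruction argument must be supplied. For comparison, the paper proves this direction with $l_1=l_2=K$ and the claim $\|S_\alpha^K\odot S_\alpha^K\|\le\delta<\|S_\alpha^K\|^2=1$; your own blindness observation shows equal forward powers can never witness failure, and the paper's choice of $K$ in fact forces $\|S_\alpha^K\|^2\le\delta$ rather than $1$, so your mixed-product route is the one that can actually be completed --- it just is not completed in the proposal.
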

\begin{rem}
\begin{itemize}
\item[1.] As a consequence of the above theorem, for $(l_1,\cdots,l_n)\in \mathbb N^n,$
$$
\|S^{l_1}\odot\cdots \odot S^{l_k}\odot S^{*l_{k+1}}\odot\cdots \odot S^{*l_{n}}\|=\|S^{l_1}\wedge\cdots \wedge S^{l_k}\wedge S^{*l_{k+1}}\wedge\cdots \wedge S^{*l_{n}}\|=1,
$$
which partially solves Problem \ref{prob1}.
\item[2.] It is evident that the shift operators on nearly every classical function space, including the Hardy shift and  weighted Bergman shift, are regular.
\end{itemize}
\end{rem}

This paper is organized as follows. In Section 2, we provide some preparations to facilitate the proof of Theorem \ref{thm1.1}.
 Section 3 is devoted to the proof of our main result, Theorem \ref{thm1.1}.  Finally, in Section 4, we solve \cite[Problem 1 and Problem 2]{GA}.

\section{Preliminaries}

In this section, we  introduce some notations and preliminaries.
For simplicity, set
\begin{eqnarray}	S_{\alpha,t}= \begin{cases}
	S_{\alpha}^{t},& t > 0, \\
I,& t =0, \\
	S_{\alpha}^{*\left | t \right | },& t < 0.
\end{cases}\end{eqnarray}
For example, $S_{\alpha,-2}=S_\alpha^{*2}$. Particularly, 
\begin{eqnarray}
S_{t}= \begin{cases}
	S^{t},& t > 0, \\
I,& t =0, \\
	S^{*\left | t \right | },& t < 0.
\end{cases}
\end{eqnarray}
Moreover, for $i \in \mathbb{N}$, $t$ $\in \mathbb{Z}$,  write
\begin{equation}\label{eq1}
	\beta_{i,t} = \begin{cases}
		\alpha _{i}\alpha _{i+1}\cdots\alpha _{i+t-1},& t > 0, \\
1,& t =0, \\
		\overline{\alpha_{i-1}\alpha_{i-2} \cdots\alpha_{i+t}},& t < 0,
	\end{cases}
\end{equation}
and
\begin{equation}\label{eq71}\Gamma_{i,t} = \begin{cases}
	\min \left \{ \left | \alpha _{i} \right |, \cdots,\left | \alpha _{i+t-1} \right | \right \}  ,& t > 0, \\
1 ,& t= 0, \\
	\min \left \{ \left | \alpha_{i-1} \right |, \cdots,\left | \alpha_{i+t} \right | \right \},& t < 0.
\end{cases}\end{equation}

For $\mathfrak{i}=(i_1,i_2,\cdots, i_n)\in\mathbb Z^n$, write $$|\mathfrak{i}|=|i_1|+\cdots+|i_n|.$$ Define an equivalence relation $\sim_{\Sigma_n}$ on $\mathbb Z^n$ by 
$$\mathfrak{i}\sim_{\Sigma_n}\mathfrak{j}$$ if and only if there exists a permutation $\pi\in\Sigma_n$ such that 
$$
i_k=j_{\pi (k)}, \quad k=1,\cdots, n.
$$
Denote $\mathbb N^n/\Sigma_n$ and $\mathbb Z^n/\Sigma_n$ to be the quotient spaces and $[i_1,\cdots,i_n] \in \mathbb Z^n/\Sigma_n $ \text{($[\mathfrak{i}]$ for short)} is the coset of $\mathfrak{i}=(i_1,\cdots,i_n)$.   For $\mathfrak{i}\in\mathbb Z^n$, $\pi\in\Sigma_n$, set 
\begin{eqnarray}
\mathfrak{i}_\pi=(i_{\pi(1)},\cdots,i_{\pi(n)}).
\end{eqnarray}

 The following lemma comes from the definition of symmetric tensor product of operators and some  direct calculations easily, and we omit its proof. {\it For convenience, when $i<0$, set $e_i=0$ and $\alpha_i=0$.}
\begin{lem}\label{lem2.1}
	For $\mathfrak{i}\in\mathbb N^n$,
	$$\begin{aligned}
		& \left ( S_{\alpha,{l_{1}}}\odot S_{\alpha,{l_{2}}}\odot \cdots \odot S_{\alpha,{l_{n}}}  \right ) \left ( e_{i_{1} }\odot e_{i_{2} }\odot \cdots \odot e_{i_{n}}    \right )
		\\ &=\frac{1}{n!} \sum\limits_{\pi  \in \Sigma _{n} }\beta_{i_{1},l_{\pi  (1)}} \beta_{i_{2},l_{\pi  (2)}}\cdots\beta_{i_{n},l_{\pi  (n)}} \left (  e_{i_{1}+l_{\pi  (1)} }\odot e_{i_{2}+l_{\pi  (2)} }\odot \cdots \odot e_{i_{n} +l_{\pi  (n)}} \right ).
	\end{aligned}$$	
\end{lem}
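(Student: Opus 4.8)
The plan is to peel off the definitions one layer at a time, doing the actual computation on the full tensor product $\mathcal H^{\otimes n}$ (where $S_{\alpha,l_{\pi(1)}}\otimes\cdots\otimes S_{\alpha,l_{\pi(n)}}$ is diagonal on the product basis) and letting the symmetrization pass through for free at the end. First I would record the one-factor action $S_{\alpha,t}e_i=\beta_{i,t}\,e_{i+t}$ for every $t\in\mathbb Z$, matching the three cases in the definition of $\beta_{i,t}$ in \eqref{eq1}. For $t>0$ this is just iterating $S_\alpha e_i=\alpha_i e_{i+1}$ to get $S_\alpha^{t}e_i=\alpha_i\alpha_{i+1}\cdots\alpha_{i+t-1}e_{i+t}$; for $t=0$ it is the identity; and for $t<0$ one first checks $S_\alpha^{*}e_i=\overline{\alpha_{i-1}}\,e_{i-1}$ (with $S_\alpha^{*}e_0=0$, consistent with the convention $\alpha_{-1}=e_{-1}=0$) and iterates to $S_\alpha^{*|t|}e_i=\overline{\alpha_{i-1}\alpha_{i-2}\cdots\alpha_{i+t}}\,e_{i+t}$. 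Consequently, for each fixed $\pi\in\Sigma_n$,
\[
\left(S_{\alpha,l_{\pi(1)}}\otimes\cdots\otimes S_{\alpha,l_{\pi(n)}}\right)\left(e_{i_1}\otimes\cdots\otimes e_{i_n}\right)=\Big(\prod_{k=1}^n\beta_{i_k,l_{\pi(k)}}\Big)\,e_{i_1+l_{\pi(1)}}\otimes\cdots\otimes e_{i_n+l_{\pi(n)}},
\]
and averaging over $\pi$ gives the action of $\mathrm{S}_n(S_{\alpha,l_1},\ldots,S_{\alpha,l_n})$ on $\mathcal H^{\otimes n}$.

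The one structural observation I would use is that $\mathrm{S}_n(S_{\alpha,l_1},\ldots,S_{\alpha,l_n})$ commutes with the orthogonal projection $P=\frac1{n!}\sum_{\sigma\in\Sigma_n}W_\sigma$ of $\mathcal H^{\otimes n}$ onto $\mathcal H^{\odot n}$, where $W_\sigma$ is the unitary permuting the tensor legs. Indeed $W_\sigma\big(S_{\alpha,l_{\pi(1)}}\otimes\cdots\otimes S_{\alpha,l_{\pi(n)}}\big)W_\sigma^{-1}=S_{\alpha,l_{\pi\sigma^{-1}(1)}}\otimes\cdots\otimes S_{\alpha,l_{\pi\sigma^{-1}(n)}}$, so conjugation only reindexes the sum over $\Sigma_n$ and fixes $\mathrm{S}_n$; hence $\mathrm{S}_n$ commutes with every $W_\sigma$ and with $P$. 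Writing $e_{i_1}\odot\cdots\odot e_{i_n}$ as the symmetrization of $e_{i_1}\otimes\cdots\otimes e_{i_n}$ (the image under $P$, up to the normalizing constant built into $\odot$, which cancels between the two sides) and recalling $S_{\alpha,l_1}\odot\cdots\odot S_{\alpha,l_n}=\mathrm{S}_n(\cdots)|_{\mathcal H^{\odot n}}$, I would evaluate, pull $P$ in front of the permutation sum, and replace each $P\big(e_{i_1+l_{\pi(1)}}\otimes\cdots\otimes e_{i_n+l_{\pi(n)}}\big)$ by $e_{i_1+l_{\pi(1)}}\odot\cdots\odot e_{i_n+l_{\pi(n)}}$. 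This is exactly the claimed identity.

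The argument is entirely routine, so there is no serious obstacle; the computation is diagonal on $\mathcal H^{\otimes n}$ and the symmetrization is handled abstractly rather than by expanding symmetric tensors by hand. The hardest parts, such as they are, will be the $t\le0$ cases of the single-factor action (checking that the conjugated weights and the vanishing at small indices agree with $\beta_{i,t}$ and with the convention $e_{j}=\alpha_{j}=0$ for $j<0$) and the verification that the symmetrizer commutes with $P$, which is precisely what lets one interchange the averaging over $\Sigma_n$ with the passage from $\otimes$ to $\odot$.
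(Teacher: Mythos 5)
Your proof is correct: the single-factor identity $S_{\alpha,t}e_i=\beta_{i,t}e_{i+t}$ (including the boundary convention $e_j=\alpha_j=0$ for $j<0$), the diagonal action of each $S_{\alpha,l_{\pi(1)}}\otimes\cdots\otimes S_{\alpha,l_{\pi(n)}}$ on the product basis, and the commutation of $\mathrm{S}_n$ with the symmetrizing projection $P$ all check out, and any normalization constant in $\odot$ indeed cancels between the two sides. The paper omits the proof precisely because it regards the lemma as following from the definitions by direct calculation, and your write-up is exactly that calculation, so it matches the intended argument.
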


To simplify the notation, for $\mathfrak{i}\in \mathbb N^n$, set 
\begin{eqnarray}
e_{\mathfrak{i}}=e_{i_{1} }\odot e_{i_{2} }\odot \cdots \odot e_{i_{n}}, 
\end{eqnarray}
and it is easy to see that for any $\pi\in\Sigma_n$, $e_{\mathfrak{i}}=e_{\mathfrak{i}_\pi}$ and write
\begin{eqnarray}
e_{[\mathfrak{i}]}=e_{\mathfrak{i}}.
\end{eqnarray}
In what follows, we will always fix the multiple index $\mathfrak{l}=(l_1,l_2,\cdots,l_n)\in \mathbb{Z}^n$ and assume that
\begin{eqnarray}\label{eq2.3}
[l_{1},  l_{2},\cdots,l_{n}]=[\underbrace{{\tilde{l}_{1}},\cdots,{\tilde{l}_{1}} }_{n_{1}} ,\cdots,\underbrace{{\tilde{l}_{k}},\cdots,{\tilde{l}_{k}} }_{n_{k}}] , \quad \mathrm{where} \ {\tilde{l}_{i}} \ne  {\tilde{l}_{j}} \text{ for }  i\not=j,
\end{eqnarray}
and write
\begin{eqnarray}
S_{\mathfrak{l}}=l_1+l_2+\cdots+l_n \text{ and }\mathscr{M}=\frac{n!}{n_{1}!\cdots n_{k}! }.
\end{eqnarray}
Furthermore, for  $\mathfrak{i}\in \mathbb{N}^{n},$
to simply the notations let
\begin{equation}\label{eq2}
R_{\mathfrak{i}}=\left\{[\mathfrak{j}]\in\mathbb N^n/\Sigma_n:  \exists \, \pi \in \Sigma_{n}~s.t.~  [\mathfrak{j}+\mathfrak{l}_\pi]=[\mathfrak{i}]\right\},
\end{equation}
and for a finite set $\mathcal{S}$,  $^{\# } \mathcal{S} $  is defined to be the cardinality of  $\mathcal{S}$. 
Then it is easy to see that
\begin{equation}\label{eq3}
	^{\#  }	R_{\mathfrak{i}  }  \le
	\ ^{\#  }	\left \{ \mathfrak{l}_\pi:\pi \in \Sigma_{n}\ \right \}   = \mathscr{M}.
\end{equation}
Let
\begin{equation}\label{eq311}
A_{k} = \begin{cases}
		\left \{ [\mathfrak{i}]\in \mathbb{N}^n/\Sigma_n: | \mathfrak{i}  |=k \ and \ ^{\#  }	R_{\mathfrak{i}  } <\mathscr{M}    \right \},& k \geq 0, \\
\emptyset,& k < 0.
	\end{cases}
	\end{equation}
Then we have the following lemma.
\begin{lem}\label{lemmas2.2}
    For $k>0$ sufficiently large, if $l_{1},  l_{2},\cdots,l_{n}$ are not all equal, then there exists a constant $M>0$, such that $^{\#  } A_{k} \le Mk^{n-2}$.
\end{lem}
\begin{proof}
Let \begin{equation}\label{eq31001}A_{k}^{\prime}=\left \{  \mathfrak{i} \in \mathbb{N}^n: | \mathfrak{i}  |=k \ and \ ^{\#  }	R_{\mathfrak{i}  } <\mathscr{M}    \right \},~ k \geq 0.\end{equation}
Obviously,  $$^{\#  }A_{k} \leq ^{\#  }A_{k}^{\prime}.$$
	Set
	\begin{center}
		$B_{k}=\left \{  \mathfrak{i}  \in A_{k}^{\prime}:\exists \, 1\le j\le n \  and \  \exists \, \pi \in \Sigma_{n} \ such \  that \  i_{j}-l_{\pi(j)}<0  \   \right \},$
	\end{center}
	and
	\begin{center}
		$D_{k}=\left \{  \mathfrak{i} \in A_{k}^{\prime}:\forall \, 1\le j\le n \  and \  \forall \, \pi \in \Sigma_{n} \ such \  that \  i_{j}-l_{\pi(j)}\ge 0  \   \right \}.$
	\end{center}
	It is easy to see that \begin{equation}\label{eq4151}A_{k}^{\prime} = B_{k}\cup D_{k}.\end{equation}
To obtain the desired result, it suffices to show that there exist constants $M_1,M_2>0$ such that
	\begin{equation}\label{eq4}
		^{\#  } B_{k} \le M_{1}\left ( k+1 \right ) ^{n-2}
	\end{equation}
and 
	\begin{equation}\label{eq5}
		^{\#  } D_{k}  \le  M_{2} \left ( k+1 \right ) ^{n-2}.
	\end{equation}
For \eqref{eq4}, by the definition of $B_{k}$, we have
	\begin{center}
		$B_{k}=\bigcup\limits_{j=1}^{n} \left \{ \mathfrak{i}  \in A_{k}^{\prime}:\exists \, \pi \in \Sigma_{n} \ such \  that \  i_{j}-l_{\pi(j)}<0  \   \right \}.$
	\end{center}
For $ 1 \le j \le n,$	let
	
		$$C_{k,j}=\left \{ \mathfrak{i}  \in A_{k}^{\prime}:\exists \, \pi \in \Sigma_{n} \ such \  that \  i_{j}-l_{\pi(j)}<0  \   \right \},$$
then obviously
$$ \begin{aligned}
		C_{k,j} & \subseteq   \left \{  \mathfrak{i}\in\mathbb N^n:    |\mathfrak{i}|=k, \exists \, \pi \in \Sigma_{n} \ such \  that \ i_{j}-l_{\pi(j)}<0  \right \}.
	\end{aligned} $$
Set
	\begin{equation}\label{eq6}
		h=\max \left \{ {\left | l_{1} \right | ,\left | l_{2} \right | ,\cdots ,\left | l_{n} \right |  } \right \}.
	\end{equation}
Then, 
	$$\begin{aligned}^{\#}C_{k,j}
	\leq	^{\#  } \bigcup\limits_{i_{j}=0}^{h} \left \{ \mathfrak{i} \in \mathbb{N}^n: |\mathfrak{i}|=k  \   \right \}  \leq \left ( h+1 \right ) \left ( k+1 \right ) ^{n-2}.
	\end{aligned}$$
 Notice that $B_{k}=\bigcup\limits_{j=1}^{n} C_{k,j},$ hence $^{\#  } B_{k}  \le n\left ( h+1 \right ) \left ( k+1 \right ) ^{n-2}$.
Next we will prove (\ref{eq5}). 
	For $ \mathfrak{i} \in D_{k}$, by the definitions of $D_{k}$ and  $A_{k}^{\prime}$, we have
\begin{equation}\label{eq2.7.1}
 i_{j}-l_{\pi(j)}\ge 0,\quad \forall \, 1\le j\le n ,  \forall \, \pi \in \Sigma_{n} \text{ and }  ^{\#  } R_{\mathfrak{i}  }   <\mathscr{M}. 
 \end{equation}
  Then by \eqref{eq2} and \eqref{eq2.7.1},
\begin{equation}\label{eq2.7.}
^{\#  }\left\{ [\mathfrak{i}-\mathfrak{l}_\sigma]:\sigma \in \Sigma_n \right\}= ^{\#  } R_{\mathfrak{i}  } <\mathscr{M}.
\end{equation}
We claim that for any $\mathfrak{i}\in D_k$ there exist $\sigma_{0}, \tau_{0} \in \Sigma_{n}$,  satisfying $\mathfrak{l}_{\sigma_0}\not=\mathfrak{l}_{\tau_0}$, such that
	\begin{equation}\label{eq2.8.}
		[\mathfrak{i}-\mathfrak{l}_{\tau_0}] = [\mathfrak{i}-\mathfrak{l}_{\sigma_0}].
	\end{equation}
Otherwise, if  for all
$\sigma, \tau \in \Sigma_{n}$ such that $\mathfrak{l}_{\sigma}\not=\mathfrak{l}_{\tau}$, we have
	\begin{eqnarray*}
[\mathfrak{i}-\mathfrak{l}_{\tau}]\not= [\mathfrak{i}-\mathfrak{l}_{\sigma}].
\end{eqnarray*}
Then
$$^{\#  }\left\{ [\mathfrak{i}-\mathfrak{l}_\sigma]:\sigma \in \Sigma_n \right\}= \mathscr{M},
$$
 which contradicts to \eqref{eq2.7.} and the claim is proved.	For $\tau$, $  \sigma \in \Sigma_{n}$, $\tau\not=\sigma$, set
	\begin{eqnarray*}\label{eq2.7.11}
		D_{k,\tau ,\sigma}&=&\left \{   \mathfrak{i} \in D_{k}: \exists 1\leq a \neq b\leq n ~s.t.~ i_{a}-l_{\tau  (a)} =i_{b}-l_{\sigma  (b)}\right\}.
\end{eqnarray*}
Obviously,
\begin{eqnarray*}
 D_{k,\tau ,\sigma}\subseteq& \bigcup\limits_{1 \le a \ne b \le n} \{\mathfrak{i} \in \mathbb{N}^n: |\mathfrak{i}|=k, \ i_{a}-l_{\tau  (a)} =i_{b}-l_{\sigma  (b)} \}.
	\end{eqnarray*}
	Therefore by \eqref{eq2.8.},
	\begin{equation}\label{eq7}
		D_{k} \subseteq \bigcup\limits_{\tau \ne \sigma \in \Sigma_{n}}	D_{k,\tau ,\sigma} \subseteq \bigcup\limits_{\tau \ne \sigma \in \Sigma_{n}} \bigcup\limits_{1 \le a \ne b \le n}\left \{\mathfrak{i} \in \mathbb{N}^n: |\mathfrak{i}|=k, \ i_{a}-l_{\tau  (a)} =i_{b}-l_{\sigma  (b)}   \   \right \}.
	\end{equation}
	Notice that for $1 \le a \ne b \le n,$
 $$
 ^{\#  } \left \{\mathfrak{i}\in \mathbb{N}^n: |\mathfrak{i}|=k, \ i_{a}-l_{\tau  (a)} =i_{b}-l_{\sigma  (b)}   \   \right \}  \le \left ( k+1 \right ) ^{n-2}.
 $$ 
 Therefore by (\ref{eq7}), we have 
$$^{\#  } D_{k}  \leq (n!)^{2}\,^{\#  } D_{k,\tau ,\sigma} \le (n!)^{2}n^{2}\left ( k+1 \right ) ^{n-2}.$$
	Notice that $n$ is fixed, and the proof is completed.
\end{proof}


Let $\left \{ e _{i}  \right \} _{i=0}^{\infty } $ be an ONB for $\mathcal{H}$. It is well known that for $i_1<i_2<\cdots<i_n$,
$$
\left\|e^{k_{1}} _{i_1} \odot e_{i_2}^{k_{2}} \odot \cdots \odot e^{k_{n}}_{i_n}\right\|=\left(\frac{k_{1}!k_{2}!\cdots k_{n}!}{\left ( k_{1}+  k_{2}+\cdots+k_{n} \right )! }\right)^{1 / 2},
$$
where $$e^{k_{j}} _{i_j} = \underbrace{e _{i_j}\odot \cdots \odot e _{i_j} }_{k_{j}}, \ 1 \le j \le n.$$
For $m=0,1,2,\cdots ,\mathscr{M}$, set
\begin{equation}\label{eq8}
	E_{r,m} = \begin{cases}
	\left \{ [\mathfrak{i}]\in\mathbb N^n/\Sigma_n: \left | \mathfrak{i}  \right |=r \ and \   ^{\#  } R_{\mathfrak{i}  }  =m   \right \},& r \geq 0, \\
	\emptyset,& r < 0.
\end{cases}
\end{equation}
Obviously, $A_r=\bigcup\limits_{m=1}^{\mathscr{M}-1}E_{r,m}$.
For  simplicity, for $\mathfrak{i},\mathfrak{j}\in \mathbb N^n$, let
\begin{equation}\label{eq11}
		N_{\mathfrak{j}, \mathfrak{i}  }=\left \{ \pi\in\Sigma_{n}:[\mathfrak{i}+\mathfrak{l}_\pi]= [\mathfrak{j}] \right \}.
	\end{equation}
Then we have the following lemmas.
\begin{lem}\label{lem530}

	If $l_{1},l_{2},\cdots,l_{n}$  are not all equal, and for $\mathfrak{j}\in \mathbb N^n,$ there exist $1 \le s_0 < t_0 \le n$ such that $j_{s_0}=j_{t_0}.$ Then \begin{equation}\label{eq1001}^{\#  }\left \{[\mathfrak{j}-\mathfrak{l}_{\pi}]:\pi \in \Sigma_{n} \right \}
	\le \ ^{\#  }\left \{ \mathfrak{l}_{\pi} :\pi \in \Sigma_{n} \right \}  - 1.\end{equation}
\end{lem}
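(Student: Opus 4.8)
The plan is to reduce \eqref{eq1001} to a counting argument by exhibiting a single genuine ``collision'' among the cosets $\langle \mathfrak{j}-\mathfrak{l}_\pi\rangle$. First I would observe that the assignment $\mathfrak{l}_\pi\mapsto\langle \mathfrak{j}-\mathfrak{l}_\pi\rangle$ is a well-defined map on the set $\{\mathfrak{l}_\pi:\pi\in\Sigma_n\}$ (if $\mathfrak{l}_\pi=\mathfrak{l}_{\pi'}$ then $\mathfrak{j}-\mathfrak{l}_\pi=\mathfrak{j}-\mathfrak{l}_{\pi'}$), that its domain has cardinality $\mathscr{M}$, and that its image is exactly the left-hand set of \eqref{eq1001}. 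Hence it suffices to prove that this map is \emph{not} injective, i.e.\ that there are $\sigma,\tau\in\Sigma_n$ with $\mathfrak{l}_\sigma\neq\mathfrak{l}_\tau$ yet $\langle \mathfrak{j}-\mathfrak{l}_\sigma\rangle=\langle \mathfrak{j}-\mathfrak{l}_\tau\rangle$.

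To construct such a collision, let $\rho=(s_0\,t_0)$ be the transposition of the two indices with $j_{s_0}=j_{t_0}$. Since $l_1,\dots,l_n$ are not all equal, I fix $p\neq q$ with $l_p\neq l_q$ and choose $\sigma\in\Sigma_n$ with $\sigma(s_0)=p$ and $\sigma(t_0)=q$. Setting $\tau=\sigma\circ\rho$, the tuple $\mathfrak{l}_\tau$ is obtained from $\mathfrak{l}_\sigma$ by interchanging its $s_0$-th and $t_0$-th entries, namely $l_{\sigma(s_0)}=l_p$ and $l_{\sigma(t_0)}=l_q$; as these differ, $\mathfrak{l}_\sigma\neq\mathfrak{l}_\tau$. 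On the other hand, because $j_{s_0}=j_{t_0}$, the tuples $\mathfrak{j}-\mathfrak{l}_\sigma$ and $\mathfrak{j}-\mathfrak{l}_\tau$ agree in every coordinate except $s_0$ and $t_0$, where they are merely swapped; so $\mathfrak{j}-\mathfrak{l}_\tau$ equals $\mathfrak{j}-\mathfrak{l}_\sigma$ after applying $\rho$ to its coordinates, whence $\langle \mathfrak{j}-\mathfrak{l}_\sigma\rangle=\langle \mathfrak{j}-\mathfrak{l}_\tau\rangle$.

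Finally I would combine the two steps: the map $\mathfrak{l}_\pi\mapsto\langle \mathfrak{j}-\mathfrak{l}_\pi\rangle$ has a domain of size $\mathscr{M}=\,^{\#}\{\mathfrak{l}_\pi:\pi\in\Sigma_n\}$ but identifies the two distinct points $\mathfrak{l}_\sigma$ and $\mathfrak{l}_\tau$, so its image has at most $\mathscr{M}-1$ elements, which is precisely \eqref{eq1001}. The only substantive point — and the one requiring care — is guaranteeing that the collision is \emph{genuine}, i.e.\ that $\mathfrak{l}_\sigma\neq\mathfrak{l}_\tau$; this is exactly where both hypotheses are needed, the repeated coordinate $j_{s_0}=j_{t_0}$ making the coordinate swap invisible on $\mathfrak{j}$, and ``$l_i$ not all equal'' allowing the choice $l_p\neq l_q$ so the swap is visible on $\mathfrak{l}_\sigma$. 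I do not anticipate any further difficulty beyond verifying this noninjectivity.
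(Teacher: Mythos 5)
Your proof is correct and takes essentially the same approach as the paper: both consider the map $T\colon \mathfrak{l}_\pi \mapsto \left\langle \mathfrak{j}-\mathfrak{l}_\pi\right\rangle$ on the $\mathscr{M}$-element set $\left\{\mathfrak{l}_\pi : \pi\in\Sigma_n\right\}$ and prove \eqref{eq1001} by exhibiting a collision that shows $T$ is not injective. The only difference is cosmetic: the paper argues in two cases ($l_{s_0}\ne l_{t_0}$, using the identity and $(s_0 t_0)$; and $l_{s_0}=l_{t_0}$, using $(r_0 s_0)$ and $(r_0 t_0)$), whereas your choice of $\sigma$ with $l_{\sigma(s_0)}\ne l_{\sigma(t_0)}$ together with $\tau=\sigma\circ(s_0\,t_0)$ handles both cases in one stroke.
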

\begin{proof}
	
	Define $T \colon  \, \left \{ \mathfrak{l}_{\pi} :\pi \in \Sigma_{n} \right \}   \to \left \{[\mathfrak{j}-\mathfrak{l}_{\pi}] :\pi \in \Sigma_{n} \right \}$ by
\begin{align*}
T:	 \mathfrak{l}_{\pi}  \mapsto[\mathfrak{j}-\mathfrak{l}_{\pi}].
\end{align*}
Obviously, $T$ is well-defined and
surjective. Hence in order to show \eqref{eq1001}, it suffices to prove that $T$ is not injective. We will consider two cases.
\begin{case}$l_{s_0} \ne l_{t_0}.$

	Set $\pi_1=(1)$ and $\pi_2=\left ( s_0t_0 \right )$. Obviously,
	$\mathfrak{l}_{\pi_1}  \ne  \mathfrak{l}_{\pi_2}.$
	Since $j_{s_0}=j_{t_0}$, we have $j_{s_0}-l_{s_0}=j_{t_0}-l_{s_0}$ and $j_{t_0}-l_{t_0}=j_{s_0}-l_{t_0}$.
	Hence it is easy to see that $$ T\left( \mathfrak{l}_{\pi_1} \right) =[\mathfrak{j}-\mathfrak{l}_{\pi_1}]=[\mathfrak{j}-\mathfrak{l}_{\pi_2}]=T\left( \mathfrak{l}_{\pi_2}\right),$$ which implies that $T$ is not injective.
\end{case}
\begin{case}$l_{s_0}=l_{t_0}.$
	
	Since $l_{1},l_{2},\cdots,l_{n}$  are not all equal, there exists $1 \leq r_0 \leq n$, such that $l_{r_0} \neq l_{s_0}=l_{t_0}$.
Set $ \pi_1=(r_0s_0)$ and $ \pi_2= (r_0t_0)$.
	Obviously, $\mathfrak{l}_{\pi_1}  \ne  \mathfrak{l}_{\pi_2}.$ 
Since $j_{s_0}=j_{t_0}$ and $l_{s_0}=l_{t_0}$, we have $j_{s_0}-l_{r_0}=j_{t_0}-l_{r_0}$, $j_{t_0}-l_{t_0}=j_{s_0}-l_{s_0}$ and $j_{r_0}-l_{s_0}=j_{r_0}-l_{t_0}$.
Hence it is not difficult to see $$ T\left( \mathfrak{l}_{\pi_1} \right) =[\mathfrak{j}-\mathfrak{l}_{\pi_1}]=[\mathfrak{j}-\mathfrak{l}_{\pi_2}]=T\left( \mathfrak{l}_{\pi_2}\right),$$ which implies that $T$ is not injective. \qedhere
\end{case}
\end{proof}
\begin{lem}\label{lem175}
	Suppose that $l_{1},l_{2},\cdots,l_{n}$  are not all equal. Then for $k>0$ large enough, and every $\mathfrak{i},\mathfrak{j} \in \mathbb{N}^n,$ which satisfies that
$[\mathfrak{j}] \in E_{k+S_{\mathfrak{l}},\mathscr{M}}$ 
and $[\mathfrak{i}] \in R_{\mathfrak{j} },$
we have 
$$
\left \|  e_{[\mathfrak{i}]}  \right \|\leq \frac{^{\#  } N_{\mathfrak{j},\mathfrak{i} }\mathscr{M}}{n!\sqrt{n!}}.
$$
\end{lem}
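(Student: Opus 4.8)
The plan is to reduce the asserted estimate to a purely combinatorial lower bound on $^{\#}N_{\mathfrak{j},\mathfrak{i}}$ and then to exhibit enough permutations in $N_{\mathfrak{j},\mathfrak{i}}$ directly. First I would rewrite the left-hand side. If the distinct values occurring among the entries of $\mathfrak{i}$ have multiplicities $m_1,\dots,m_p$ (so $m_1+\cdots+m_p=n$), then by the norm formula recalled just before \eqref{eq8},
$$\left\|e_{\langle\mathfrak{i}\rangle}\right\|=\left(\frac{m_1!\,m_2!\cdots m_p!}{n!}\right)^{1/2}.$$
Since $\mathscr{M}=n!/(n_1!\cdots n_k!)$, after multiplying the target inequality by $\sqrt{n!}$ and using $\mathscr{M}/n!=1/(n_1!\cdots n_k!)$, the claim $\left\|e_{\langle\mathfrak{i}\rangle}\right\|\le \tfrac{^{\#}N_{\mathfrak{j},\mathfrak{i}}\,\mathscr{M}}{n!\sqrt{n!}}$ becomes equivalent to
$$^{\#}N_{\mathfrak{j},\mathfrak{i}}\ \ge\ (n_1!\cdots n_k!)\sqrt{m_1!\cdots m_p!}.$$
In fact I expect to prove the stronger bound $^{\#}N_{\mathfrak{j},\mathfrak{i}}\ge (n_1!\cdots n_k!)(m_1!\cdots m_p!)$, which suffices because $m_1!\cdots m_p!\ge 1$.

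The hypotheses enter only to guarantee that $\mathfrak{j}$ has pairwise distinct entries. Indeed, $\langle\mathfrak{j}\rangle\in E_{k+S_{\mathfrak{l}},\mathscr{M}}$ means $^{\#}R_{\mathfrak{j}}=\mathscr{M}$, while from \eqref{eq2} one always has $R_{\mathfrak{j}}\subseteq\{\langle\mathfrak{j}-\mathfrak{l}_\sigma\rangle:\sigma\in\Sigma_n\}$. If two entries of $\mathfrak{j}$ coincided, Lemma \ref{lem530} would give $^{\#}\{\langle\mathfrak{j}-\mathfrak{l}_\sigma\rangle:\sigma\in\Sigma_n\}\le\mathscr{M}-1$, hence $^{\#}R_{\mathfrak{j}}\le\mathscr{M}-1$, contradicting $^{\#}R_{\mathfrak{j}}=\mathscr{M}$. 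Thus $j_1,\dots,j_n$ are distinct, and consequently, for every $\pi\in N_{\mathfrak{j},\mathfrak{i}}$, the condition $\mathfrak{i}+\mathfrak{l}_\pi\sim_{\Sigma_n}\mathfrak{j}$ forces the $n$ numbers $i_m+l_{\pi(m)}$ to be pairwise distinct.

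For the counting I would organize the permutations by the induced assignment $g_\pi\colon m\mapsto l_{\pi(m)}$ of shift-values to positions. Two permutations induce the same $g_\pi$ precisely when they differ by an element of the stabilizer of $\mathfrak{l}$ in $\Sigma_n$, whose order equals $n_1!\cdots n_k!$; hence, writing $W$ for the number of distinct \emph{valid} assignments $g$ (those with $\{i_m+g(m)\}_m=\{j_1,\dots,j_n\}$), one gets $^{\#}N_{\mathfrak{j},\mathfrak{i}}=(n_1!\cdots n_k!)\,W$ (see \eqref{eq11}). To bound $W$ below, fix one valid assignment $g_0$, which exists since $\langle\mathfrak{i}\rangle\in R_{\mathfrak{j}}$ makes $N_{\mathfrak{j},\mathfrak{i}}\neq\emptyset$, and let $G\le\Sigma_n$ be the subgroup of permutations preserving each block of positions on which $\mathfrak{i}$ is constant, so $^{\#}G=m_1!\cdots m_p!$. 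Permuting shift-values within such a block leaves the multiset $\{i_m+g(m)\}_m$ unchanged because the $i_m$ agree there, so every $g_0\circ\rho$ with $\rho\in G$ is again valid. Moreover, the distinctness of the $j$'s forces the shift-values of $g_0$ within each block to be pairwise distinct, whence $\rho\mapsto g_0\circ\rho$ is injective on $G$; this produces $m_1!\cdots m_p!$ distinct valid assignments, i.e. $W\ge m_1!\cdots m_p!$. Combining the two displays yields $^{\#}N_{\mathfrak{j},\mathfrak{i}}\ge(n_1!\cdots n_k!)(m_1!\cdots m_p!)$, which gives the lemma.

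The main obstacle, and the only genuinely delicate point, is the interplay of the two independent symmetries: the factor $n_1!\cdots n_k!$ comes from permuting equal shift-values, while the factor $m_1!\cdots m_p!$ comes from permuting equal base-indices, and one must be careful that these are counted by disjoint fibers so that they multiply rather than overlap. The crucial verification is that the block-permutations $g_0\circ\rho$ are genuinely distinct assignments; this is exactly where the distinctness of $\mathfrak{j}$—extracted from $^{\#}R_{\mathfrak{j}}=\mathscr{M}$ via Lemma \ref{lem530}—is indispensable, since without it the shift-values within a block could repeat and the injectivity of $\rho\mapsto g_0\circ\rho$ would fail.
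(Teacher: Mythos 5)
Your proof is correct and follows essentially the same route as the paper: both arguments extract the pairwise distinctness of the entries of $\mathfrak{j}$ from $^{\#}R_{\mathfrak{j}}=\mathscr{M}$ via Lemma \ref{lem530}, establish the counting bound $^{\#}N_{\mathfrak{j},\mathfrak{i}}\ge (m_1!\cdots m_p!)(n_1!\cdots n_k!)$ (the paper writes $t_1!\cdots t_s!$ for your $m_1!\cdots m_p!$), and conclude with the same chain of inequalities using $\mathscr{M}=n!/(n_1!\cdots n_k!)$. The only difference is one of detail, not of method: you justify the counting bound explicitly through the stabilizer fibers of $\pi\mapsto\mathfrak{l}_\pi$ and the block permutations of equal entries of $\mathfrak{i}$, a step the paper asserts without proof.
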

\begin{proof}
	Since $[\mathfrak{j}]\in E_{k+S_{\mathfrak{l}},\mathscr{M}}$, by the definition of $E_{k+S_{\mathfrak{l}},\mathscr{M}}$ in (\ref{eq8}),
	\begin{equation}\label{eq9}
		^{\#  } R_{\mathfrak{j}  }  =\mathscr{M} .
	\end{equation}
We claim that for all $1 \le s < t \le n,$
	\begin{equation}\label{eq34}
		j_{s} \ne j_{t}.
	\end{equation}
	Otherwise, there exists $1 \le s_0 < t_0 \le n$, such that \begin{equation}\label{eq3411}j_{s_0}=j_{t_0}.\end{equation} By (\ref{eq2}), obviously
	$$R_{\mathfrak{j}  }    =
	\left \{ [\mathfrak{j}-\mathfrak{l}_{\pi}] :\pi \in \Sigma_{n} , j_{i}-l_{\pi(i)}  \ge 0, \forall \, 1\le i \le n\right \}.$$
	Notice that $l_{1},l_{2},\cdots,l_{n}$  are not all equal, hence by Lemma \ref{lem530},
		$$\begin{aligned}
		^{\#  } R_{\mathfrak{j}  }
		\le  \ ^{\#  }\left \{ [\mathfrak{j}-\mathfrak{l}_{\pi}] :\pi \in \Sigma_{n} \right \}
		  \le \ ^{\#  }\left \{ \mathfrak{l}_{\pi}  :\pi \in \Sigma_{n} \right \}  - 1
		 =  \ \mathscr{M}-1,
	\end{aligned}$$
which contradicts to \eqref{eq9}. The claim is proved.
	Write
	$$[i_1,\cdots,i_n]=[\underbrace{\widetilde{i_{1}},\cdots,\widetilde{i_{1}} }_{t_{1}} ,\cdots,\underbrace{\widetilde{i_{s}},\cdots,\widetilde{i_{s}} }_{t_{s}}], \mathrm{where} \ \widetilde{i_{1}} \ne \cdots \ne \widetilde{i_{s}}.$$
Since $[\mathfrak{i}] \in R_{\mathfrak{j} },$ we have $ N_{\mathfrak{j},\mathfrak{i} }  \neq \emptyset.$
	Therefore by (\ref{eq11}) and \eqref{eq34},
	$$ \begin{aligned}
		^{\#  } N_{\mathfrak{j},\mathfrak{i} }  &= \ ^{\#  } \left \{\pi \in \Sigma_{n}:[\mathfrak{i}+\mathfrak{l}_{\pi}]=[\mathfrak{j}] \right \}
		\ge \  t_{1}!t_{2}!\cdots t_{s}!n_{1}!\cdots n_{k}!.
	\end{aligned} $$
	Thus	
\begin{equation*}
	\left \|  e_{[\mathfrak{i}]}  \right \|=\sqrt{\frac{t_{1}!t_{2}!\cdots t_{s}!}{n!}}
	\leq \frac{t_{1}!t_{2}!\cdots t_{s}!}{\sqrt{n!}}
	\leq\frac{ ^{\#  } N_{\mathfrak{j},\mathfrak{i} }}{n_1!n_2!\cdots n_k!\sqrt{n!}}
	= \frac{^{\#  } N_{\mathfrak{j},\mathfrak{i} }\mathscr{M}}{n!\sqrt{n!}}.  \qedhere
\end{equation*}
\end{proof}
For $k\geq 0, n\geq 1,$ using the notation in \cite[Page 55]{FJbook}, denote $P(k,n)$  by the number of partitions of $k$ into at most $n$ parts, i.e.
\begin{equation}\label{pkn}P(k,n)=\ ^{\#  }\left \{ [k_{1},\cdots,k_{n}] :\sum\limits_{i=1}^{n}k_{i}=k, k_{i}\ge0  \right \}.\end{equation} For $k<0$, $n \ge 1$, let $P(k,n)=0$.
Moreover, for $k\geq 0$, $Q(k,n)$ is defined by the number of partitions of $k$ into at most $n$ ordered parts, i.e.
$$Q(k,n)=\ ^{\#  }\left \{ \left ( k_{1},\cdots,k_{n} \right ) :\sum\limits_{i=1}^{n}k_{i}=k, k_{i}\ge0  \right \}.$$
The following lemma may be well-known, but we state it here for the readers' convenience.
\begin{lem}\label{lemma2.5}
	For fixed $n \ge 1$,
	$$\lim_{k \to \infty} \frac{P(k,n)}{P(k+1,n)} =1.$$
\end{lem}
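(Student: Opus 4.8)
The plan is to reduce the statement about $P(k,n)$ to the corresponding statement about $Q(k,n)$, which is completely explicit. Indeed, $Q(k,n)$ counts the ordered tuples $(k_1,\dots,k_n)$ of nonnegative integers summing to $k$, so $Q(k,n)=\binom{k+n-1}{n-1}$ and a one-line computation with factorials gives
\[
\frac{Q(k,n)}{Q(k+1,n)}=\frac{\binom{k+n-1}{n-1}}{\binom{k+n}{n-1}}=\frac{k+1}{k+n}\xrightarrow[k\to\infty]{}1 .
\]
Hence it suffices to prove that $P(k,n)\sim Q(k,n)/n!$ as $k\to\infty$, for then
\[
\frac{P(k,n)}{P(k+1,n)}\sim\frac{Q(k,n)/n!}{Q(k+1,n)/n!}=\frac{Q(k,n)}{Q(k+1,n)}\to 1 .
\]
(The cases $n=1$, where $P(k,1)=1$, and $n=2$ can be checked by hand; we take $n\ge 2$ below.)

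To establish $P(k,n)\sim Q(k,n)/n!$, I would write each partition of $k$ into at most $n$ parts as $k_1\ge k_2\ge\dots\ge k_n\ge 0$ and let $m_v$ denote the multiplicity of the value $v$ (over $v=0,1,2,\dots$, so that $\sum_v m_v=n$). Each such partition $\lambda$ is the reordering of exactly $n!/\prod_v m_v!$ distinct ordered tuples, and these classes are disjoint and exhaust all tuples counted by $Q(k,n)$. Summing over $\lambda$ gives the identity
\[
Q(k,n)=\sum_{\lambda}\frac{n!}{\prod_v m_v!}\le n!\,P(k,n),
\]
which is the easy lower bound $P(k,n)\ge Q(k,n)/n!$. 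The point is then to control the deficit $n!\,P(k,n)-Q(k,n)=\sum_{\lambda}\bigl(n!-\tfrac{n!}{\prod_v m_v!}\bigr)$, whose summands are nonzero precisely for the \emph{degenerate} $\lambda$, namely those with some $m_v\ge 2$: either a repeated positive part, or at least two zero parts.

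The main step, and the place where the real work lies, is the estimate that the number of degenerate partitions is $O(k^{n-2})$. A partition with two zero parts uses at most $n-2$ positive parts, so their number is $P(k,n-2)$; a partition with a repeated positive value $a\ge 1$ is obtained by choosing $a$ and then partitioning $k-2a$ into at most $n-2$ parts, so their number is at most $\sum_{a\ge 1}P(k-2a,\,n-2)$. Using the trivial bound $P(m,j)\le Q(m,j)=\binom{m+j-1}{j-1}=O(m^{\,j-1})$, both contributions are $O(k^{n-2})$. Since each deficit summand is at most $n!$, we obtain $n!\,P(k,n)-Q(k,n)=O(k^{n-2})$. Finally, because $Q(k,n)=\binom{k+n-1}{n-1}\asymp k^{n-1}$, this deficit is $o\!\left(Q(k,n)\right)$, whence $n!\,P(k,n)/Q(k,n)\to 1$, i.e. $P(k,n)\sim Q(k,n)/n!$. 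Combined with the ratio computation for $Q$ in the first paragraph, this yields $P(k,n)/P(k+1,n)\to 1$, completing the argument. I expect the degenerate-count bound $O(k^{n-2})$ to be the only genuinely delicate point; everything else is bookkeeping with binomial coefficients.
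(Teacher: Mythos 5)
Your proof is correct, but it takes a genuinely different route from the paper. The paper proceeds by induction on $n$: it quotes the generating function $\sum_k P(k,m)q^k = \prod_{j=1}^m (1-q^j)^{-1}$, multiplies by $(1-q^{m+1})^{-1}$ to get the convolution identity $P((m+1)s+t,\,m+1)=\sum_{i=0}^{s}P((m+1)i+t,\,m)$ along each residue class $t$ modulo $m+1$, and then applies the Stolz--Ces\`aro theorem to transfer the inductive hypothesis (the ratio statement for $m$) to $m+1$. You instead prove the stronger, explicit asymptotic $n!\,P(k,n)=Q(k,n)+O(k^{n-2})$ with $Q(k,n)=\binom{k+n-1}{n-1}$, by observing that the deficit is supported on ``degenerate'' partitions (a repeated part, possibly zero) and that these number $O(k^{n-2})$; the ratio limit then falls out of the exact formula for $Q$. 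Your degenerate-count estimate is sound (two zero parts give at most $P(k,n-2)$ partitions; a repeated positive part $a$ gives at most $\sum_{a\ge 1}P(k-2a,n-2)$, and both are $O(k^{n-2})$), and it is pleasantly in the same spirit as the paper's own codimension-counting arguments in Lemma 2.2 and Lemma 2.6, as well as its use of $P(k,n)\ge Q(k,n)/n!$ in (2.26)--(2.27). What your approach buys is a self-contained elementary argument avoiding generating functions and Stolz, and a true asymptotic $P(k,n)\sim k^{n-1}/\bigl(n!\,(n-1)!\bigr)$, which is strictly more than the lemma asserts; what the paper's approach buys is brevity, since the generating function identity is quoted from the literature and no asymptotics are needed. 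One cosmetic point: you write ``we take $n\ge 2$ below'' after saying $n=2$ can be checked by hand --- you presumably mean $n\ge 3$, or else should note the conventions $P(m,0)=Q(m,0)=1$ if $m=0$ and $0$ otherwise, under which your argument does go through for $n=2$ as well; either fix is immediate.
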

\begin{proof}
	For $n=1$,
	$$\lim_{k \to \infty} \frac{P(k,1)}{P(k+1,1)} =\lim_{k \to \infty} \frac{k}{k+1} =1.$$
	We will prove the lemma by induction.
	Suppose that
	\begin{equation}\label{eq13}
		\lim_{k \to \infty} \frac{P(k,m)}{P(k+1,m)} =1,
	\end{equation} and we will prove
	\begin{equation}\label{eq14}
		\lim_{k \to \infty} \frac{P(k,m+1)}{P(k+1,m+1)} =1.
	\end{equation}
	By \cite[Page 57]{FJbook}, for $\left | q \right |  <1$, we have
	$$\sum\limits_{k=0}^{\infty }P (k,m)q^{k} =\frac{1}{\left ( 1-q \right ) \left ( 1-q^{2} \right )\cdots\left ( 1-q^{m} \right )  }.$$
	It follows that
	$$\begin{aligned}
		\sum\limits_{k=0}^{\infty }P (k,m+1)q^{k} &=\left ( \sum\limits_{k=0}^{\infty }P (k,m)q^{k} \right ) \frac{1}{ 1-q^{m+1}   }
	\\	&=\left ( \sum\limits_{k=0}^{\infty }P (k,m)q^{k} \right )\sum\limits_{k=0}^{\infty }\left ( q^{m+1}  \right )^{k}
	\\&= \sum\limits_{k=0}^{\infty } \sum\limits_{i+j=k}P(i,m)f(j)q^{k},
	\end{aligned}$$
	where
\begin{equation}
f(j) =
\begin{cases}
1, & j = k(m + 1), \quad k = 0, 1, 2, \cdots, \\
0, & \text{otherwise}.
\end{cases}
\label{eq:131}
\end{equation}
	Hence
	\begin{equation}\label{eq15}
		P (k,m+1)=\sum\limits_{i+j=k}P(i,m)f(j).
	\end{equation}
     For $s \in \mathbb{N}$ and $0 \le t \le m,$ let $k=\left ( m+1 \right )s+t$. In order to prove (\ref{eq14}), it suffices to show that
     for every $0 \le t \le m,$
     $$    \lim_{s \to \infty}\frac{P\left ( \left ( m+1 \right ) s+t,m+1  \right ) }{P\left ( \left ( m+1 \right ) s+t+1,m+1  \right )}=1 .$$
     By (\ref{eq15}) and \eqref{eq:131}, we have
     $$\begin{aligned}
     	P\left ( \left ( m+1 \right ) s+t,m+1  \right ) &=\sum\limits_{i+j=\left ( m+1 \right ) s+t}P(i,m)f(j)
     	\\&= \sum\limits_{i=0}^{s } P(\left ( m+1 \right ) i+t,m)
     \end{aligned}$$
     and
          $$\begin{aligned}
     	P\left ( \left ( m+1 \right ) s+t+1,m+1  \right ) &=\sum\limits_{i+j=\left ( m+1 \right ) s+t+1}P(i,m)f(j)
     	\\&= \sum\limits_{i=-1}^{s } P(\left ( m+1 \right ) i+t+1,m).
     \end{aligned}$$
     Then by (\ref{eq13}) and Stolz theorem, we have
     \begin{align*}
     \lim_{s \to \infty}\frac{P\left ( \left ( m+1 \right ) s+t,m+1  \right ) }{P\left ( \left ( m+1 \right ) s+t+1,m+1  \right )} &=\lim_{s \to \infty}\frac{ \sum\limits_{i=0}^{s } P(\left ( m+1 \right ) i+t,m)}{\sum\limits_{i=-1}^{s } P(\left ( m+1 \right ) i+t+1,m)}
     \\&= \lim_{s \to \infty}\frac{  P(\left ( m+1 \right ) s+t,m)}{ P(\left ( m+1 \right ) s+t+1,m)}
     \\&= 1.   \qedhere
     \end{align*}
\end{proof}
For $1>\varepsilon >0$, let
\begin{equation}\label{eq161}\tilde{A}_{k+S_\mathfrak{l},\mathscr{M},\varepsilon } =\left \{ [\mathfrak{j}] \in E_{k+S_\mathfrak{l},\mathscr{M} }:\Gamma_{j_{m},-l_{\pi  (m)}} > 1 - \varepsilon,  \forall 1 \leq m \leq n, \forall \, \pi \in \Sigma_{n}   \right \}\end{equation}
and
$$\check{A}  _{k+S_\mathfrak{l},\mathscr{M},\varepsilon } =\left \{ [\mathfrak{j}] \in E_{k+S_\mathfrak{l},\mathscr{M} }:\exists \, \pi \in \Sigma_{n} \text{ and } \exists \, 1 \le m \le n ~s.t.~
\Gamma_{j_{m},-l_{\pi  (m)}} \le 1  - \varepsilon \right \}.$$
It is easy to see that
\begin{equation}\label{eq16}
	E_{k+S_\mathfrak{l},\mathscr{M} } = \tilde{A}_{k+S_\mathfrak{l},\mathscr{M},\varepsilon } \sqcup \check{A}  _{k+S_\mathfrak{l},\mathscr{M},\varepsilon },
\end{equation}
where $\sqcup$ represents the disjoint union.
 Then we have the following lemmas.
\begin{lem}\label{lemma2.6}
	If $\left \{ \alpha _{k}  \right \} _{k=0}^{\infty }$ satisfies the regularity condition and $\lim\limits_{k\to\infty}|\alpha_k| = 1$, then there exists a positive constant $C$, such that for $ k > 0$ large enough, 
	$$^{\#  } \check{A}_{k+S_\mathfrak{l},\mathscr{M} ,\varepsilon}  \le C k ^{n-2}.$$
\end{lem}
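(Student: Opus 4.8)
The plan is to show that membership in $\check{A}_{k+S_\mathfrak{l},\mathscr{M},\varepsilon}$ forces one coordinate of $\mathfrak{j}$ into a fixed finite set independent of $k$, and then to bound the number of such cosets by a partition count of degree $n-2$, exactly in the spirit of the estimate for $D_k$ in Lemma \ref{lemmas2.2}.

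First I would exploit the hypothesis $\lim_{k\to\infty}|\alpha_k|=1$ (regularity giving in addition $|\alpha_m|\le 1$ for all $m$). For the fixed $\varepsilon\in(0,1)$ the set $F_\varepsilon=\{p\in\mathbb N:|\alpha_p|\le 1-\varepsilon\}$ is finite, since $|\alpha_p|>1-\varepsilon$ once $p$ is large. Put $p_\varepsilon=\max F_\varepsilon$ (and $p_\varepsilon=0$ if $F_\varepsilon=\emptyset$), and recall $h=\max\{|l_1|,\dots,|l_n|\}$ from \eqref{eq6}. The crucial observation is that, by \eqref{eq71}, the quantity $\Gamma_{j_m,-l_{\pi(m)}}$ is the minimum of $|\alpha_p|$ over a block of consecutive indices whose smallest element is $j_m-l_{\pi(m)}$ when $l_{\pi(m)}>0$ and $j_m$ when $l_{\pi(m)}<0$, while $\Gamma_{j_m,0}=1>1-\varepsilon$ contributes nothing. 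Hence $\Gamma_{j_m,-l_{\pi(m)}}\le 1-\varepsilon$ can hold only if this block meets $F_\varepsilon\subseteq\{0,\dots,p_\varepsilon\}$, which in either sign regime forces
$$
j_m\le p_\varepsilon+h=:b .
$$
Since the defining condition of $\check{A}_{k+S_\mathfrak{l},\mathscr{M},\varepsilon}$ only requires \emph{some} $m$ and \emph{some} $\pi$ with $\Gamma_{j_m,-l_{\pi(m)}}\le 1-\varepsilon$, every $\langle\mathfrak{j}\rangle\in\check{A}_{k+S_\mathfrak{l},\mathscr{M},\varepsilon}$ has at least one coordinate in the $k$-independent finite set $\{0,1,\dots,b\}$.

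Next I would count. Fixing which coordinate is $\le b$ ($n$ choices) and its value $v\in\{0,\dots,b\}$, the remaining $n-1$ coordinates are nonnegative integers summing to $k+S_\mathfrak{l}-v$. Bounding the number of cosets by the number of such ordered tuples, we obtain at most
$$
n(b+1)\,Q(k+S_\mathfrak{l},\,n-1),
$$
and $Q(N,n-1)=\binom{N+n-2}{n-2}$ is a polynomial of degree $n-2$ in $N$ (cf. the counts after \eqref{pkn}). For $k$ large this is $\le C k^{n-2}$ with $C$ depending only on $n$, $\mathfrak{l}$ and $\varepsilon$, which is the claim.

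The main obstacle is the crucial observation itself: reading off correctly from \eqref{eq71} which index block is being minimized in each sign regime of $l_{\pi(m)}$, and verifying that in every case the constraint $\Gamma_{j_m,-l_{\pi(m)}}\le 1-\varepsilon$ pins $j_m$ inside a bounded interval governed by $F_\varepsilon$ and $h$. Once this bounded-coordinate reduction is established, the remaining step is the same polynomial partition estimate already used for $D_k$ in Lemma \ref{lemmas2.2}, and the regularity together with $\lim|\alpha_k|=1$ enters \emph{only} through the finiteness of $F_\varepsilon$.
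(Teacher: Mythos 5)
Your proposal is correct and takes essentially the same route as the paper's proof: your finiteness of $F_\varepsilon$ is exactly the paper's choice of $N$ with $|\alpha_i|>1-\varepsilon$ for $i>N$, your bounded-coordinate reduction $j_m\le p_\varepsilon+h$ matches the paper's bound $j_m\le N+h+1$ obtained from the same block-minimum reading of $\Gamma_{j_m,-l_{\pi(m)}}$, and the final count of tuples with one coordinate confined to a $k$-independent finite set is the same degree-$(n-2)$ estimate. No gaps.
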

\begin{proof}	For $1 \le m \le n$, write
$$\check{B}  _{k+S_\mathfrak{l},\varepsilon,m }=\left \{ \mathfrak{j} \in \mathbb{N}^{n}: |\mathfrak{j}|=k+S_\mathfrak{l}, 
\exists \, \pi \in \Sigma_{n} ~s.t.~
\Gamma_{j_{m},-l_{\pi  (m)}} \le 1  - \varepsilon \right \}$$
and
$$\check{A}  _{k+S_\mathfrak{l},\mathscr{M},\varepsilon }^{\prime} =\left \{ \mathfrak{j} \in \mathbb{N}^{n}: [\mathfrak{j}] \in E_{k+S_\mathfrak{l},\mathscr{M} }, \exists \, \pi \in \Sigma_{n} \text{ and } \exists \, 1 \le m \le n ~s.t.~
\Gamma_{j_{m},-l_{\pi  (m)}} \le 1  - \varepsilon \right \}.$$
	It is easy to see that 
 \begin{equation}\label{eqprime}^\#\check{A}  _{k+S_\mathfrak{l},\mathscr{M},\varepsilon } \leq \ ^\#\check{A}  _{k+S_\mathfrak{l},\mathscr{M},\varepsilon }^{\prime} \end{equation} and
	\begin{eqnarray}\label{eq37}
			  &&\check{A}  _{k+S_\mathfrak{l},\mathscr{M},\varepsilon }^{\prime} \nonumber\\
			&\subseteq&   \left \{ \mathfrak{j} \in \mathbb{N}^{n}:|\mathfrak{j}|=k+S_\mathfrak{l}, 
			\exists \, \pi \in \Sigma_{n} \text{ and } \exists \, 1 \le m \le n ~s.t.~
			\Gamma_{j_{m},-l_{\pi  (m)}} \le 1  - \varepsilon \right \}\nonumber\\
            &=&\bigcup_{m=1}^{n} \check{B}  _{k+S_\mathfrak{l},\varepsilon,m }.
	\end{eqnarray}
Suppose that $^{\#  } \check{B}  _{k+S_\mathfrak{l},\varepsilon,m }\neq \emptyset.$ Then
for $\mathfrak{j} \in \check{B}  _{k+S_\mathfrak{l},\varepsilon,m }$, there exists a $\pi \in\Sigma_n,$ such that $l_{\pi  (m)}\neq 0,$ and $\Gamma_{j_{m},-l_{\pi  (m)}} \le 1 - \varepsilon$, i.e.
 \begin{equation}
1 - \varepsilon \ge \Gamma_{j_{m},-l_{\pi  (m)}} = \begin{cases}
	\min \left \{ \left | \alpha _{j_{m}} \right |, \cdots,\left | \alpha _{j_{m}-l_{\pi  (m)}-1} \right | \right \}, &l_{\pi  (m)}< 0, \\
	\min \left \{ \left | \alpha _{j_{m}-1} \right |, \cdots,\left | \alpha _{j_{m}-l_{\pi  (m)}} \right | \right \}, &l_{\pi  (m)} > 0.
\end{cases}
\label{eq:88}
\end{equation}
Since the weight $\alpha$ satisfies the regularity condition,   $| \alpha_{k} |\leq\lim\limits_{i\to\infty}|\alpha_i|=1 $ holds for any $k \in \mathbb{N}$. Thus, there exists an $N > 0$ such that for all $i > N$, \begin{equation}
	\left | \alpha_{i} \right | > 1 - \varepsilon.
	\label{eq:87}
\end{equation}	
Recall that $h=\max \left \{ {\left | l_{1} \right | ,\cdots ,\left | l_{n} \right |  } \right \}$ is introduced in (\ref{eq6}). Hence by \eqref{eq:88} and \eqref{eq:87}, for $\mathfrak{j} \in \check{B}  _{k+S_\mathfrak{l},\varepsilon,m }$,  $j_{m} \le N+h+1.$
It follows that for every $m\in \left \{ 1,2,\cdots,n \right \} $,
 \begin{equation}\label{eq38}
	\check{B}  _{k+S_\mathfrak{l},\varepsilon,m } \subseteq  \bigcup\limits_{j_{m}=0}^{N+h+1 } \left \{ \mathfrak{j} \in \mathbb{N}^{n}:|\mathfrak{j}|=k+S_\mathfrak{l}   \right \}.
\end{equation}
It is not hard to verify that for every fixed $0 \le t \le N+h+1$,
 \begin{equation}
	^\#\left \{ \mathfrak{j} \in \mathbb{N}^{n}:j_m=t, |\mathfrak{j}|=k+S_\mathfrak{l}   \right \}  \le \left ( k+S_\mathfrak{l}+1  \right ) ^{n-2},
\end{equation}
then \begin{eqnarray}\label{eq39}
^{\#  }\check{B}  _{k+S_\mathfrak{l},\varepsilon,m }  \le (N+h+2)\left ( k+S_\mathfrak{l}+1  \right ) ^{n-2}.\end{eqnarray}
Thus by \eqref{eqprime}, (\ref{eq37}) and (\ref{eq39}), there is a positive constant $C$ such that for $k>0$ large enough, 
\begin{equation*}
	^{\#  }	\check{A}  _{k+S_\mathfrak{l},\mathscr{M},\varepsilon }  \le n(N+h+2)\left ( k+S_\mathfrak{l}+1  \right ) ^{n-2}\leq Ck^{n-2}.  \qedhere
\end{equation*}
\end{proof}
\begin{lem}\label{lem2.7}
	For $0 < \varepsilon < 1$, we have
		$$ \lim\limits_{k\to \infty}\frac{^{\#  } \tilde{A}_{k+S_\mathfrak{l},\mathscr{M},\varepsilon } }{P(k,n) }= 1.$$
\end{lem}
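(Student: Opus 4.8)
The plan is to express ${}^{\#}\tilde{A}_{k+S_\mathfrak{l},\mathscr{M},\varepsilon}$ \emph{exactly} as the partition count $P(k+S_\mathfrak{l},n)$ minus two lower-order error terms, and then to divide by $P(k,n)$ and let $k\to\infty$. Throughout I use the standing assumptions of this section, namely that $l_1,\dots,l_n$ are not all equal, that the weight is regular, and that $\lim_{k\to\infty}|\alpha_k|=1$, so that Lemmas \ref{lemmas2.2} and \ref{lemma2.6} are available.

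First I would set up the bookkeeping. The cosets $\langle\mathfrak{i}\rangle\in\mathbb N^n/\Sigma_n$ with $|\mathfrak{i}|=k+S_\mathfrak{l}$ are in bijection with the partitions of $k+S_\mathfrak{l}$ into at most $n$ parts, so there are exactly $P(k+S_\mathfrak{l},n)$ of them. Sorting them by the value of ${}^{\#}R_{\mathfrak{i}}$ and recalling from \eqref{eq3} that ${}^{\#}R_{\mathfrak{i}}\le\mathscr{M}$, while $A_{k+S_\mathfrak{l}}$ is by definition the collection of those with ${}^{\#}R_{\mathfrak{i}}<\mathscr{M}$, gives
$$P(k+S_\mathfrak{l},n)={}^{\#}E_{k+S_\mathfrak{l},\mathscr{M}}+{}^{\#}A_{k+S_\mathfrak{l}}.$$
On the other hand, the disjoint decomposition \eqref{eq16} reads ${}^{\#}E_{k+S_\mathfrak{l},\mathscr{M}}={}^{\#}\tilde{A}_{k+S_\mathfrak{l},\mathscr{M},\varepsilon}+{}^{\#}\check{A}_{k+S_\mathfrak{l},\mathscr{M},\varepsilon}$. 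Combining the two identities yields
$${}^{\#}\tilde{A}_{k+S_\mathfrak{l},\mathscr{M},\varepsilon}=P(k+S_\mathfrak{l},n)-{}^{\#}A_{k+S_\mathfrak{l}}-{}^{\#}\check{A}_{k+S_\mathfrak{l},\mathscr{M},\varepsilon}.$$

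Next I would divide by $P(k,n)$ and argue that the three resulting terms tend to $1$, $0$, and $0$. For the first, iterating Lemma \ref{lemma2.5} exactly $|S_\mathfrak{l}|$ times (using reciprocals if $S_\mathfrak{l}<0$, and noting $k+S_\mathfrak{l}>0$ for large $k$) gives $P(k+S_\mathfrak{l},n)/P(k,n)\to1$. For the remaining two terms I need a lower bound on the growth of $P(k,n)$: since each unordered partition arises from at most $n!$ ordered tuples, $P(k,n)\ge Q(k,n)/n!=\binom{k+n-1}{n-1}/n!\ge c\,k^{n-1}$ for some constant $c>0$ and all large $k$. By Lemma \ref{lemmas2.2} one has ${}^{\#}A_{k+S_\mathfrak{l}}\le M(k+S_\mathfrak{l})^{n-2}$, and by Lemma \ref{lemma2.6} one has ${}^{\#}\check{A}_{k+S_\mathfrak{l},\mathscr{M},\varepsilon}\le C k^{n-2}$; dividing each by $P(k,n)\ge c\,k^{n-1}$ shows both quotients are $O(1/k)\to0$, and adding the three limits gives the asserted value $1$.

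The one genuinely quantitative point — and the step I would treat most carefully — is the matching of polynomial orders: both error sets have size $O(k^{n-2})$, whereas $P(k,n)$ must be shown to grow like $k^{n-1}$ so that those errors are negligible. This is precisely where the hypotheses are decisive. The assumption that $l_1,\dots,l_n$ are not all equal is what pushes the degenerate family $A_{k+S_\mathfrak{l}}$ down to order $k^{n-2}$ through Lemma \ref{lemmas2.2}, while regularity together with $\lim_{k\to\infty}|\alpha_k|=1$ is what confines $\check{A}_{k+S_\mathfrak{l},\mathscr{M},\varepsilon}$ (the cosets on which some $\Gamma_{j_m,-l_{\pi(m)}}$ drops below $1-\varepsilon$) to order $k^{n-2}$ through Lemma \ref{lemma2.6}. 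The elementary lower bound $P(k,n)\ge\binom{k+n-1}{n-1}/n!$ of order $k^{n-1}$ (legitimate since $n\ge2$) then dominates both, and the harmless index shift $k\mapsto k+S_\mathfrak{l}$ in the denominator is reabsorbed by Lemma \ref{lemma2.5}.
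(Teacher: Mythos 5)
Your proof is correct and follows essentially the same route as the paper: the identity $^{\#}\tilde{A}_{k+S_\mathfrak{l},\mathscr{M},\varepsilon}=P(k+S_\mathfrak{l},n)-{}^{\#}A_{k+S_\mathfrak{l}}-{}^{\#}\check{A}_{k+S_\mathfrak{l},\mathscr{M},\varepsilon}$ (the paper writes $^{\#}A_{k+S_\mathfrak{l}}$ as $\sum_{j=0}^{\mathscr{M}-1}{}^{\#}E_{k+S_\mathfrak{l},j}$), the bounds from Lemmas \ref{lemmas2.2} and \ref{lemma2.6}, the lower bound $P(k,n)\ge Q(k,n)/n!$ of order $k^{n-1}$, and Lemma \ref{lemma2.5} to absorb the shift $k\mapsto k+S_\mathfrak{l}$. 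The only cosmetic difference is that you divide by $P(k,n)$ at once and handle the main term by iterating Lemma \ref{lemma2.5}, while the paper first takes the limit against $P(k+S_\mathfrak{l},n)$ and converts the denominator afterwards.
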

\begin{proof}
Obviously,
$$
\bigcup_{j=0}^{\mathscr{M} }  E_{k+S_\mathfrak{l} ,j}  =\left \{ [\mathfrak{j}]\in \mathbb N^n/\Sigma_n : \left | \mathfrak{j}\right |=k+S_\mathfrak{l}   \right \}
$$
and
 $$
 ^{\#  } \left \{ [\mathfrak{j}]\in\mathbb N^n/\Sigma_n : \left | \mathfrak{j} \right |=k+S_\mathfrak{l}  \right \}  =P(k+S_\mathfrak{l}, n).
 $$
	Notice that for $j\in \left \{ 0,1,\cdots,\mathscr{M} \right \} $, $E_{k+S_\mathfrak{l}, j}$ are mutually disjoint. Then we have
	\begin{equation}\label{eq18}
		^{\#  }E_{k+S_\mathfrak{l},\mathscr{M}}   = P(k+S_\mathfrak{l},n) - \sum\limits_{j=0}^{\mathscr{M}-1} \  ^{\#  }E_{k+S_\mathfrak{l},j}     .
	\end{equation}
	It follows from (\ref{eq16}) that
	\begin{equation}\label{eq19}
		\begin{aligned}
			^{\#  } \tilde{A}_{k+S_\mathfrak{l},\mathscr{M},\varepsilon }
			  &= \ ^{\#  }E_{k+S_\mathfrak{l},\mathscr{M}}   - \ ^{\#  } \check{A}_{k+S_\mathfrak{l},\mathscr{M},\varepsilon }
			\\&=  P(k+S_\mathfrak{l},n) - \sum\limits_{j=0}^{\mathscr{M}-1}\  ^{\#  }E_{k+S_\mathfrak{l},j} - \ ^{\#  } \check{A}_{k+S_\mathfrak{l},\mathscr{M},\varepsilon }.
		\end{aligned}
	\end{equation}
	For every $0 \le j \le \mathscr{M}-1$ and sufficiently large $k$, by \eqref{eq8}, \eqref{eq311} and Lemma 2.2, there exists a constant $M>0$ such that
	$$^{\#  } E_{k+S_\mathfrak{l},j}   \le \ ^{\#  } A_{k+S_\mathfrak{l}} \le M\left ( k+S_\mathfrak{l}\right ) ^{n-2}.$$
	Additionally, it is not difficult to see that for sufficiently large $k$,
	\begin{equation}\label{eq191} P(k+S_\mathfrak{l},n) \ge \frac{Q(k+S_\mathfrak{l},n)}{n!} = \frac{C_{k+S_\mathfrak{l}+n-1}^{n-1}}{n!}\end{equation}
    and
    \begin{equation}\label{eq1911}\lim\limits_{k \to \infty} \frac{\frac{C_{k+S_\mathfrak{l}+n-1}^{n-1}}{n!}}{k^{n-1}} > 0.\end{equation}
	Hence for every $0 \le j \le \mathscr{M}-1$,
	$$\varlimsup_{k\to \infty} \frac{^{\#  } E_{k+S_{\mathfrak{l}},j}  }{P(k+S_{\mathfrak{l}},n)}=0.$$
	By Lemma \ref{lemma2.6}, \eqref{eq191} and \eqref{eq1911},
	$$\varlimsup_{k\to \infty} \frac{^{\#  } \check{A}_{k+S_{\mathfrak{l}},\mathscr{M},\varepsilon }  }{P(k+S_{\mathfrak{l}},n)} = 0.$$
	Therefore, by (\ref{eq19}),
	$$
\lim\limits_{k\to \infty}\frac{^{\#  } \tilde{A}_{k+S_{\mathfrak{l}},\mathscr{M},\varepsilon }  }{P(k+S_{\mathfrak{l}},n) }=\lim\limits_{k\to \infty}\frac{P(k+S_{\mathfrak{l}},n) - \sum\limits_{j=0}^{\mathscr{M}-1} \ ^{\#  } E_{k+S_{\mathfrak{l}},j}   - \ ^{\#  } \check{A}_{k+S_{\mathfrak{l}},\mathscr{M},\varepsilon }  }{P(k+S_{\mathfrak{l}},n) } = 1.
$$
	Thus, by Lemma \ref{lemma2.5}, we have
		\begin{equation*}
			\lim\limits_{k\to \infty}\frac{^{\#  } \tilde{A}_{k+S_{\mathfrak{l}},\mathscr{M},\varepsilon }  }{P(k,n) }= 1. \qedhere
		\end{equation*}
\end{proof}
\begin{lem}\label{lem248}
For $k \geq 0$ and $0 \leq i_1 \leq \cdots \leq i_n$ satisfying $|\mathfrak{i}|=k,$ we have that for $[\mathfrak{j}] \in \tilde{A}_{k+S_{\mathfrak{l}},\mathscr{M},\varepsilon }(0 < \varepsilon < 1)$ 
 and $\pi \in N_{\mathfrak{j}, \mathfrak{i}  },$
it holds
\begin{equation*}\label{eq25111} \left | \beta_{i_{a},l_{\pi  (a)}} \right | >\left ( 1 - \varepsilon \right ) ^{ \left | l_{\pi(a)} \right |   },\quad \forall 1 \le a \le n.\end{equation*}
\end{lem}
\begin{proof}
Since $\pi \in N_{\mathfrak{j}, \mathfrak{i}  },$ by \eqref{eq11},
\begin{equation*}\label{eq26}
[\mathfrak{i}+\mathfrak{l}_\pi]= [\mathfrak{j}].
\end{equation*}
Then for every fixed $1 \le a \le n$, there exists a $1 \le b \le n$, such that $i_{a}+l_{\pi  (a)}=j_{b}.$ Choose a $\tau_{0} \in \Sigma_{n}$ such that $\tau_{0}(b) = \pi (a)$. Hence from $[\mathfrak{j}] \in \tilde{A}_{k+S_{\mathfrak{l}},\mathscr{M},\varepsilon }$ and \eqref{eq161}, we have
\begin{equation}\label{eq3611}
\Gamma_{i_{a}+l_{\pi  (a)},-l_{\pi(a)}} =	\Gamma_{i_{a}+l_{\pi  (a)},-l_{\tau_0(b)}} = \Gamma_{j_{b},-l_{\tau_0(b)}} > 1 - \varepsilon.
\end{equation}
 Then by \eqref{eq71} and (\ref{eq3611}),
\begin{equation}\label{eq40}
1 - \varepsilon <\Gamma_{i_{a}+l_{\pi  (a)},-l_{\pi(a)}}= \begin{cases}
	\min \left \{ \left | \alpha _{i_{a}+l_{\pi(a)}} \right |, \cdots,\left | \alpha _{i_{a}-1} \right | \right \} ,&-l_{\pi(a)}> 0, \\
1 ,&-l_{\pi(a)}= 0, \\
	\min \left \{ \left | \alpha _{i_{a}+l_{\pi(a)}-1} \right |, \cdots,\left | \alpha _{i_{a}} \right | \right \},& -l_{\pi(a)} < 0.  
\end{cases}
\end{equation}
Hence by (\ref{eq1}) and (\ref{eq40}),
\begin{equation*}
\left ( 1 - \varepsilon \right ) ^{ |l_{\pi(a)}|  } <\left | \beta_{i_{a},l_{\pi  (a)}} \right |= \begin{cases}
	\left | \alpha _{i_{a}-1} \cdots \alpha _{i_{a}+l_{\pi(a)}} \right |,&-l_{\pi(a)}> 0, \\
1 ,&-l_{\pi(a)}= 0, \\
	\left | \alpha _{i_{a}} \cdots \alpha _{i_{a}+l_{\pi(a)}-1} \right |,& -l_{\pi(a)} < 0.  
\end{cases} 
\end{equation*}
The lemma is proved. \qedhere
\end{proof}

For any $\mathfrak{i} \in \mathbb{N}^n$
  such that $|\mathfrak{i}|= k$, let
 \begin{equation}\label{eq2233}\Sigma_{n,\mathfrak{i}}^{\prime}=\{\pi\in\Sigma_{n}: \mathfrak{i}+\mathfrak{l}_\pi \in \mathbb{N}^n\}\end{equation}
and \begin{equation}\label{eq22}
		P_{\mathfrak{i},k,t  }=\left \{ \pi\in\Sigma_{n}:[\mathfrak{i}+\mathfrak{l}_\pi]\in E_{k+S_{\mathfrak{l}},t } \right \}, \quad 0 \leq t \leq \mathscr{M}.
	\end{equation}
Obviously, 
$$ P_{\mathfrak{i},k,0 }=\emptyset, \quad \Sigma_{n,\mathfrak{i}}^{\prime}= \mathop{\bigsqcup}_{t=1}^{\mathscr{M}}P_{\mathfrak{i},k,t},$$
where $\bigsqcup$ denotes the disjoint union.
Moreover, by the definitions of $P_{\mathfrak{i},k,t  },$ $N_{\mathfrak{j}, \mathfrak{i}  },$ and $E_{k+S_{\mathfrak{l}},t }$  in \eqref{eq22}, \eqref{eq11} and \eqref{eq8}, it is easy to see that for $t\geq 1,$
\begin{eqnarray}\label{eq3.1}
P_{\mathfrak{i},k,t  }= \bigsqcup\limits_{[\mathfrak{j}]\in E_{k+S_{\mathfrak{l}},t}} N_{\mathfrak{j},\mathfrak{i}}.
\end{eqnarray}
Hence 
\begin{eqnarray}\label{eq3.1111}
\Sigma_{n,\mathfrak{i}}^{\prime} = \mathop{\bigsqcup}_{t=1}^{\mathscr{M}}\bigsqcup\limits_{[\mathfrak{j}] \in E_{k+S_{\mathfrak{l}},t}} N_{\mathfrak{j},\mathfrak{i}}.
\end{eqnarray}

{\section{Proof of  the main result}

In this section, we prove our main result Theorem \ref{thm1.1}, which will be divided in two subsections.  

\subsection{Proof of ``$(1)\Leftrightarrow (2)$"}

	For	$(1)\Rightarrow (2)$,
assume  $0\not=\mu = \lim\limits_{i \rightarrow \infty}\alpha_i $. Obviously, for every $(l_{1},l_{2},\cdots,l_{n})\in \mathbb{Z}^n,$
$$
\left\|S_{\frac{\alpha}{\mu}, l_1}\odot\cdots\odot S_{\frac{\alpha}{\mu},l_n}\right\|={1\over |\mu|^{|\mathfrak{l}|}}\left\| S_{\alpha,{l_{1}}}\odot S_{\alpha,{l_{2}}}\odot \cdots \odot S_{\alpha,{l_{n}}}\right\|.
$$
Without loss of generality, assume that $\lim\limits_{i\to\infty}\alpha_i=1$. We claim that for every $(l_{1},l_{2},\cdots,l_{n})\in \mathbb{Z}^n,$
		\begin{equation}\label{eq20}
				\left \|  S_{\alpha,{l_{1}}}\odot S_{\alpha,{l_{2}}}\odot \cdots \odot S_{\alpha,{l_{n}}}  \right \|  = 1.
		\end{equation}
		By \cite[Proposition 3.4]{GA},
		$$\left \|  S_{\alpha,{l_{1}}}\odot S_{\alpha,{l_{2}}}\odot \cdots \odot S_{\alpha,{l_{n}}}  \right \|  \le 1.$$
		The proof of the claim only requires proving
		\begin{equation}\label{eq20case}\left \|  S_{\alpha,{l_{1}}}\odot S_{\alpha,{l_{2}}}\odot \cdots \odot S_{\alpha,{l_{n}}}  \right \|  \ge 1.\end{equation}
We will consider this in two cases.
		\begin{case1}
		 $l_{1}=l_{2}=\cdots=l_{n}=l$ are all equal.
	
Without loss of generality assume $l>0$.   Since  $ 1=\lim\limits_{i \to \infty } \left | \alpha_{i}  \right | \ge \left | \alpha_{m}  \right | $ for any $m \in \mathbb{N}$, it follows that for any $1 > \varepsilon > 0$, there exists an $N > 0$ such that for all $i > N$, 
\begin{equation}\label{eq2011}1 > \left | \alpha_{i} \right |  > 1  - \varepsilon.\end{equation}	Choose $\left \{ i_{j} \right \} _{j=1}^{n} $ large enough such that
	\begin{equation}\label{eq21}
		N   < i_{1} < i_{2} < \cdots < i_{n}.
	\end{equation}
	Obviously,
	$$	\begin{aligned}
				\left ( S_{\alpha}^l\odot S_{\alpha}^l\odot \cdots \odot S_{\alpha}^l  \right ) \left ( e_{[\mathfrak{i}]}    \right )
		=\left(\prod\limits_{j=1}^n \prod\limits_{k=1}^l\alpha_{i_{j}+k-1}\right)\left (  e_{i_{1}+l }\odot e_{i_{2}+l }\odot \cdots \odot e_{i_{n} +l} \right ).	
	\end{aligned}$$
	Since for $j\not= k$, $i_j\not=i_k$, we have
	$$\|  e_{i_{1} }\odot e_{i_{2} }\odot \cdots \odot e_{i_{n}}\|=\left \|  e_{i_{1}+l }\odot e_{i_{2}+l }\odot \cdots \odot e_{i_{n} +l} \right \| ={ 1\over \sqrt{n!}},$$
	 hence by \eqref{eq2011},
	$$	
					\left \| S_{\alpha}^l\odot S_{\alpha}^l\odot \cdots \odot S_{\alpha}^l\right \|\geq
			\left ( 1 - \varepsilon \right ) ^{nl}.
$$
	From the arbitrariness of $\varepsilon$, we have
	$$\left \|  S_{\alpha,{l_{1}}}\odot S_{\alpha,{l_{2}}}\odot \cdots \odot S_{\alpha,{l_{n}}}  \right \|  \ge 1.$$
	\end{case1}
	\begin{case2}
		 $l_{1},l_{2},\cdots,l_{n}$  are not all equal.
		\begin{case2.1}\label{case2.1} $\alpha_i \geq 0$ for $i\geq 0.$

For $0 \leq i_1 \leq \cdots \leq i_n$ with $|\mathfrak{i}|=k$, let $a_{[\mathfrak{i}]}=\frac{1}{\sqrt{P(k,n)} }$. Obviously $\sum\limits\limits_{0 \leq i_1 \leq \cdots \leq i_n, |\mathfrak{i}|=k} a_{[\mathfrak{i}]} \frac{ e_{[\mathfrak{i}]} }{\left \|  e_{[\mathfrak{i}]}  \right \| }$ is a unit vector.
To calculate the norm of $S_{\alpha,{l_{1}}}\odot \cdots \odot S_{\alpha,{l_{n}}}$, we will estimate the norms of $S_{\alpha,{l_{1}}}\odot \cdots \odot S_{\alpha,{l_{n}}}\left(\sum\limits_{0 \leq i_1 \leq \cdots \leq i_n, |\mathfrak{i}|=k}a_{[\mathfrak{i}]} \frac{ e_{[\mathfrak{i}]} }{\left \|  e_{[\mathfrak{i}]}  \right \| }\right)$. 
By the definition of $\Sigma_{n,\mathfrak{i}}^{\prime}$ in \eqref{eq2233}, it is easy to see that
for $\pi \in \Sigma_{n}\backslash \Sigma_{n,\mathfrak{i}}^{\prime},$  $$\beta_{i_{1},l_{\pi  (1)}} \cdots\beta_{i_{n},l_{\pi  (n)}}=0.$$ Hence by Lemma \ref{lem2.1}, \eqref{eq16} and \eqref{eq3.1111}, for $k>0$ large enough, we have
	\begin{equation}\label{eq24}
			\begin{aligned}
			&	\left ( S_{\alpha,{l_{1}}}\odot \cdots \odot S_{\alpha,{l_{n}}}  \right )
			 \sum\limits_{0 \leq i_1 \leq \cdots \leq i_n, |\mathfrak{i}|=k}a_{[\mathfrak{i}]} \frac{ e_{[\mathfrak{i}]} }{\left \|  e_{[\mathfrak{i}]}  \right \| }
\\ =& \sum\limits_{0 \leq i_1 \leq \cdots \leq i_n, |\mathfrak{i}|=k}\frac{a_{[\mathfrak{i}]}}{n!}\sum\limits_{\pi  \in \Sigma_{n,\mathfrak{i}}^{\prime} }\frac{ \beta_{i_{1},l_{\pi  (1)}} \cdots\beta_{i_{n},l_{\pi  (n)}}e_{[\mathfrak{i}+\mathfrak{l}_\pi]}}{\left \|  e_{[\mathfrak{i}] } \right \| }
			\\=& \sum\limits_{[\mathfrak{j}]\in E_{k+S_{\mathfrak{l}},\mathscr{M} }  }
			\left (\sum\limits_{0 \leq i_1 \leq \cdots \leq i_n, |\mathfrak{i}|=k}
 \sum\limits_{\pi \in N_{\mathfrak{j}, \mathfrak{i}  } }\frac{a_{[\mathfrak{i}]}		\beta_{i_{1},l_{\pi  (1)}} \cdots\beta_{i_{n},l_{\pi  (n)}}}{n!\left \| e_{[\mathfrak{i}]} \right \|} \right )
			e_{[\mathfrak{j}]}
			\\  & ~ + \sum\limits_{t=1}^{\mathscr{M}-1}\sum\limits_{ [\mathfrak{j}]\in E_{k+S_{\mathfrak{l}},t }  }
			\left ( \sum\limits_{0 \leq i_1 \leq \cdots \leq i_n, |\mathfrak{i}|=k}
 \sum\limits_{\pi \in N_{\mathfrak{j}, \mathfrak{i}  } }\frac{ a_{[\mathfrak{i}]  }		\beta_{i_{1},l_{\pi  (1)}} \cdots\beta_{i_{n},l_{\pi  (n)}}}{n!\left \|e_{[\mathfrak{i}]} \right \|} \right )
			 	e_{[\mathfrak{j}]}  
			\\ =&  \sum\limits_{[\mathfrak{j}]\in \tilde{A}_{k+S_{\mathfrak{l}},\mathscr{M},\varepsilon }}
             \left ( \sum\limits_{0 \leq i_1 \leq \cdots \leq i_n, |\mathfrak{i}|=k}
 \sum\limits_{\pi \in N_{\mathfrak{j}, \mathfrak{i}  } }\frac{a_{[\mathfrak{i}]}			\beta_{i_{1},l_{\pi  (1)}}\cdots\beta_{i_{n},l_{\pi  (n)}}}{n!\left \| e_{[\mathfrak{i}]} \right \|} \right )
           e_{[\mathfrak{j}]}
           \\& ~ +\sum\limits_{[\mathfrak{j}]\in\check{A}  _{k+S_{\mathfrak{l}},\mathscr{M},\varepsilon }  }      \left (\sum\limits_{0 \leq i_1 \leq \cdots \leq i_n, |\mathfrak{i}|=k}
 \sum\limits_{\pi \in N_{\mathfrak{j}, \mathfrak{i}  } }\frac{a_{[\mathfrak{i}]}	 	\beta_{i_{1},l_{\pi  (1)}} \cdots\beta_{i_{n},l_{\pi  (n)}}}{n!\left \| e_{[\mathfrak{i}]} \right \|} \right )
          e_{[\mathfrak{j}]}
          \\  & ~ + \sum\limits_{t=1}^{\mathscr{M}-1}\sum\limits_{[\mathfrak{j}]\in E_{k+S_{\mathfrak{l}},t }  }
			\left (\sum\limits_{0 \leq i_1 \leq \cdots \leq i_n, |\mathfrak{i}|=k}
 \sum\limits_{\pi \in N_{\mathfrak{j}, \mathfrak{i}  } }\frac{a_{[\mathfrak{i}]}			\beta_{i_{1},l_{\pi  (1)}} \cdots\beta_{i_{n},l_{\pi  (n)}}}{n!\left \| e_{[\mathfrak{i}]} \right \|} \right )
			e_{[\mathfrak{j}]}.
		\end{aligned}
	\end{equation}
		For $\mathfrak{j} \in \mathbb{N}^{n}$ satisfying $[\mathfrak{j}] \in E_{k+S_{\mathfrak{l}},\mathscr{M} }  $, by (\ref{eq34}), we have
		$$j_{k_1} \ne j_{k_2}, \ 1 \le k_1 \ne k_2 \le n,$$ which implies that
		\begin{equation}\label{eq25}
			\left \| e_{[\mathfrak{j}]} \right \| =\frac{1}{\sqrt{n!} }.
		\end{equation}
Notice that
$$\sum\limits_{[\mathfrak{j}]\in \tilde{A}_{k+S_{\mathfrak{l}},\mathscr{M},\varepsilon }}
             \left ( \sum\limits_{0 \leq i_1 \leq \cdots \leq i_n, |\mathfrak{i}|=k}
 \sum\limits_{\pi \in N_{\mathfrak{j}, \mathfrak{i}  } }\frac{a_{[\mathfrak{i}]}			\beta_{i_{1},l_{\pi  (1)}}\cdots\beta_{i_{n},l_{\pi  (n)}}}{n!\left \| e_{[\mathfrak{i}]} \right \|} \right )
           e_{[\mathfrak{j}]},$$ $$\sum\limits_{[\mathfrak{j}]\in\check{A}  _{k+S_{\mathfrak{l}},\mathscr{M},\varepsilon }  }      \left (\sum\limits_{0 \leq i_1 \leq \cdots \leq i_n, |\mathfrak{i}|=k}
 \sum\limits_{\pi \in N_{\mathfrak{j}, \mathfrak{i}  } }\frac{a_{[\mathfrak{i}]}	 	\beta_{i_{1},l_{\pi  (1)}} \cdots\beta_{i_{n},l_{\pi  (n)}}}{n!\left \| e_{[\mathfrak{i}]} \right \|} \right )
          e_{[\mathfrak{j}]}$$
       and $$ \sum\limits_{t=1}^{\mathscr{M}-1}\sum\limits_{[\mathfrak{j}]\in E_{k+S_{\mathfrak{l}},t }  }
			\left (\sum\limits_{0 \leq i_1 \leq \cdots \leq i_n, |\mathfrak{i}|=k}
 \sum\limits_{\pi \in N_{\mathfrak{j}, \mathfrak{i}  } }\frac{a_{[\mathfrak{i}]}			\beta_{i_{1},l_{\pi  (1)}} \cdots\beta_{i_{n},l_{\pi  (n)}}}{n!\left \| e_{[\mathfrak{i}]} \right \|} \right )
			e_{[\mathfrak{j}]}$$
are pairwise orthogonal,
		hence by (\ref{eq24}) and (\ref{eq25}), we have
		\begin{equation}\label{eq2511}  \begin{aligned}
			&	\left \| \left ( S_{\alpha,{l_{1}}}\odot S_{\alpha,{l_{2}}}\odot \cdots \odot S_{\alpha,{l_{n}}}  \right )
			\sum\limits_{0 \leq i_1 \leq \cdots \leq i_n, |\mathfrak{i}|=k}a_{[\mathfrak{i}]} \frac{ e_{[\mathfrak{i}]} }{\left \|  e_{[\mathfrak{i}]}  \right \| }       \right \| ^{2}
			\\\ge& \left \| \sum\limits_{[\mathfrak{j}]\in \tilde{A}_{k+S_{\mathfrak{l}},\mathscr{M},\varepsilon }  }
			\left ( \sum\limits_{0 \leq i_1 \leq \cdots \leq i_n, |\mathfrak{i}|=k}
 \sum\limits_{\pi \in N_{\mathfrak{j}, \mathfrak{i}  } } \frac{a_{[\mathfrak{i}]}		\beta_{i_{1},l_{\pi  (1)}}\cdots\beta_{i_{n},l_{\pi  (n)}}}{n!\left \| e_{[\mathfrak{i}]} \right \|} \right )
				e_{[\mathfrak{j}]}   \right \| ^{2}
			\\= &\sum\limits_{[\mathfrak{j}]\in \tilde{A}_{k+S_{\mathfrak{l}},\mathscr{M},\varepsilon }} \left | \sum\limits_{0 \leq i_1 \leq \cdots \leq i_n, |\mathfrak{i}|=k}
 \sum\limits_{\pi \in N_{\mathfrak{j}, \mathfrak{i}  } }\frac{a_{[\mathfrak{i}]}		\beta_{i_{1},l_{\pi  (1)}} \cdots\beta_{i_{n},l_{\pi  (n)}} }{n!\left \| e_{[\mathfrak{i}]} \right \|} \right | ^{2}
			\left \| e_{[\mathfrak{j}]}  \right \| ^{2}
			\\= &\frac{1}{n!}\sum\limits_{[\mathfrak{j}]\in \tilde{A}_{k+S_{\mathfrak{l}},\mathscr{M},\varepsilon}}
			\left | \sum\limits_{0 \leq i_1 \leq \cdots \leq i_n, |\mathfrak{i}|=k}
 \sum\limits_{\pi \in N_{\mathfrak{j}, \mathfrak{i}  } }\frac{a_{[\mathfrak{i}]}		\beta_{i_{1},l_{\pi  (1)}} \cdots\beta_{i_{n},l_{\pi  (n)}} }{n!\left \| e_{[\mathfrak{i}]} \right \|} \right | ^{2}\\ \ge&  \frac{^{\#  } \tilde{A}_{k+S_{\mathfrak{l}},\mathscr{M},\varepsilon } }{P(k ,n) } \left ( 1 - \varepsilon \right ) ^{2\left ( \left | l_{1} \right |  + \cdots +\left | l_{n} \right | \right ) },
		\end{aligned}	\end{equation}
where the last inequality comes from Lemma \ref{lem175} and Lemma \ref{lem248}, together with the fact that $[\mathfrak{i}] \in R_{\mathfrak{j} }$ if and only if $N_{\mathfrak{j},\mathfrak{i} }  \neq \emptyset.$
		By Lemma \ref{lem2.7}
			$$ \lim\limits_{k\to \infty}\frac{^{\#  } \tilde{A}_{k+S_{\mathfrak{l}},\mathscr{M},\varepsilon }  }{P(k,n) }\left ( 1 - \varepsilon \right ) ^{2\left ( \left | l_{1} \right |  + \cdots +\left | l_{n} \right | \right ) } = \left ( 1 - \varepsilon \right ) ^{2\left ( \left | l_{1} \right | + \cdots +\left | l_{n} \right | \right ) },$$
		and hence $$\left \| S_{\alpha,{l_{1}}}\odot S_{\alpha,{l_{2}}}\odot \cdots \odot S_{\alpha,{l_{n}}}  \right \|  \ge \left ( 1 - \varepsilon \right ) ^{\left | l_{1} \right | + \cdots +\left | l_{n} \right |}.$$
		From the arbitrariness of $\varepsilon$, we have
		$$\left \|  S_{\alpha,{l_{1}}}\odot S_{\alpha,{l_{2}}}\odot \cdots \odot S_{\alpha,{l_{n}}}  \right \|  \ge 1.$$

\end{case2.1}
\begin{case2.2} General case: 
$\alpha_i \in \mathbb{C}$ for $i\geq 0$ and $\lim\limits_{i\to\infty}\alpha_i=1$.
Set  $$H_i=\overline{span\{e_{i+1},e_{i+2},\cdots\}}.$$
Obviously, $\lim\limits_{i\rightarrow \infty}\|(S_{\alpha}-S)|_{H_i}\|=0.$ 
Then it is easy to see that
\begin{equation}\label{eq2342343}\lim\limits_{i\rightarrow \infty}\left \|  (S_{\alpha,{l_{1}}}\odot S_{\alpha,{l_{2}}}\odot \cdots \odot S_{\alpha,{l_{n}}}-S_{{l_{1}}}\odot S_{{l_{2}}}\odot \cdots \odot S_{{l_{n}}})|_{H_i\odot \cdots \odot H_i}  \right \|=0.\end{equation}
Notice that \begin{equation}\label{eq23423438}\left \|  S_{\alpha,{l_{1}}}\odot S_{\alpha,{l_{2}}}\odot \cdots \odot S_{\alpha,{l_{n}}}\right\|\geq \left \|  S_{\alpha,{l_{1}}}\odot S_{\alpha,{l_{2}}}\odot \cdots \odot S_{\alpha,{l_{n}}}|_{H_i
\odot \cdots \odot H_i}\right\|.\end{equation} 
Hence in order to show that $$\left \|  S_{\alpha,{l_{1}}}\odot S_{\alpha,{l_{2}}}\odot \cdots \odot S_{\alpha,{l_{n}}}\right\|\geq 1,$$ by \eqref{eq2342343} and \eqref{eq23423438}, it suffices to prove that for $i\geq 0,$ \begin{equation}\label{eq2342345}\|S_{{l_{1}}}\odot S_{{l_{2}}}\odot \cdots \odot S_{{l_{n}}}|_{H_i\odot \cdots \odot H_i}\|\geq 1.\end{equation}
Set $$(1)_i=\left( \underbrace{0,\cdots,0 }_{i+1} ,1,1,1,\cdots \right), \quad i\geq 0.$$
By some direct calculations, 
\begin{equation}\label{eq234234}\|S_{{l_{1}}}\odot S_{{l_{2}}}\odot \cdots \odot S_{{l_{n}}}|_{H_i\odot \cdots \odot H_i}\|\geq \|S_{(1)_i,{l_{1}}}\odot S_{(1)_i,{l_{2}}}\odot \cdots \odot S_{(1)_i,{l_{n}}}\|, \quad i\geq0.\end{equation}
By Case 2.1, \begin{equation}\label{eq2342341}\|S_{(1)_i,{l_{1}}}\odot S_{(1)_i,{l_{2}}}\odot \cdots \odot S_{(1)_i,{l_{n}}}\|=1, \quad  i\geq0.\end{equation}
Therefore  \eqref{eq2342345} holds.
\end{case2.2}
	\end{case2}
Now, we prove	$(2)\Rightarrow(1)$.
Without loss of generality, we suppose $\sup\{|\alpha_i| : i \geq 0\}=1.$
If $\{\alpha_{i}\}_{i=0}^{\infty}$ does not satisfy the regularity condition in Definition \ref{def:regularity}, 
then there exists a finite subset $\{i_j\}_{j=1}^N$ and $\delta < 1$ such that
$$
\begin{cases} 
|\alpha_{i_j}|=1, & j=1,\cdots,N, \\ 
|\alpha_i|< \delta, & i\not\in\{i_j: j=1,\cdots,N \}.
\end{cases}$$
It follows from the definition of $\beta_{i, t}$ in \eqref{eq1} that there exists a constant $K\geq 2$ such that, for any non-negative integers $j_1$
  and $j_2$
  satisfying $0 \leq j_1\leq j_2$,
  we have $$|\beta_{j_1 - K, K} \beta_{j_2 - K, K}| < \delta.$$
 We claim that
\begin{equation}\label{eq2.34}
\left \|  S_{\alpha}^K\odot S_{\alpha}^K\right \| <  \left \|   S_{\alpha}^K\right \|\left \| S_{\alpha}^K\right \|=1.
\end{equation}
In fact, for $(i_1,i_2) \in \mathbb{Z}^2$, let $a_{[i_1, i_2] } \in \mathbb{C}$
such that $a_{[i_1, i_2]} = 0$ for $(i_1, i_2) \notin \mathbb{N}^2$. By Lemma \ref{lem2.1}, for every $ k \geq 0,$
	\begin{equation*}
				\left( S_{\alpha,{K}}\odot S_{\alpha,{K}}\right)
			\sum_{0 \leq i_1 \leq  i_2 \atop i_1 +  i_2 = k}a_{[i_1 , i_2]} {\left(  e_{i_{1} }\odot e_{i_{2} } \right) }
=\sum_{0 \leq i_1 \leq  i_2 \atop i_1 +  i_2 = k}a_{[i_1 , i_2]}\beta_{i_{1},K} \beta_{i_{2},K} \left (  e_{i_{1}+K }\odot e_{i_{2}+K } \right ).
\end{equation*}
Therefore 
\begin{equation}\label{eq2.36}
			\begin{aligned}
	&	\left\|\left( S_{\alpha,{K}}\odot S_{\alpha,{K}}\right)
			\sum_{0 \leq i_1 \leq  i_2 \atop i_1 +  i_2 = k}a_{[i_1 , i_2]}\frac{ e_{i_{1} }\odot e_{i_{2} } }{\left \|  e_{i_{1} }\odot e_{i_{2} } \right \| }\right\|^2
\\&\leq \sum\limits_{0 \leq j_1 \leq  j_2 \atop j_1 +  j_2 = k+2K   }
		  \left| \frac{a_{[j_1 -K,j_2-K] }	\beta_{j_1 -K,K} \beta_{j_2-K,K}}{\left \|  e_{j_1 -K }\odot e_{j_2-K}  \right \|}  \right|^2
		\left\|e_{j_{1} }\odot e_{j_{2}}  \right\|^2.
			\end{aligned}
\end{equation}
Thus when $\sum\limits_{0 \leq i_1 \leq  i_2 \atop i_1 +  i_2 = k}|a_{[i_1 , i_2]}|^2 =1,$ by \eqref{eq2.36},
\begin{equation}
			\begin{aligned}
			&	\left\|\left( S_{\alpha,{K}}\odot S_{\alpha,{K}}\right)
			\sum_{0 \leq i_1 \leq  i_2 \atop i_1 +  i_2 = k}a_{[i_1 , i_2]}\frac{ e_{i_{1} }\odot e_{i_{2} } }{\left \|  e_{i_{1} }\odot e_{i_{2} } \right \| }\right\|^2
\\&\leq \delta^2 \left(\sum\limits_{ 0 \leq j_1 \leq j_2 \atop j_1 +  j_2 = k+2K }
		 \left| \frac{a_{[j_1 -K,j_2-K] }	}{\left \|  e_{j_1 -K }\odot e_{j_2-K}  \right \|} \right|^2
		\left\|e_{j_{1} }\odot e_{j_{2}}  \right\|^2\right)
		 \\&\leq \delta^2  \left(\sum\limits_{ 0 \leq j_1 \le  j_2 \atop j_1 +  j_2 = k+2K }
		 \left| a_{[j_1 -K,j_2-K] } \right|^2 \right)
		 \leq \delta^2  \left(\sum\limits_{ 0 \leq j_1 \le  j_2 \atop j_1 +  j_2 = k }
		 \left| a_{\left\langle j_1 ,j_2\right\rangle } \right|^2 \right)
=\delta^2
<1.
\end{aligned}
\end{equation}
Notice that for $0\leq k_1 < k_2,$ 
$$\left\langle\left( S_{\alpha,{K}}\odot S_{\alpha,{K}}\right)
			\sum_{0 \leq i_1 \leq  i_2 \atop i_1 +  i_2 = k_1}a_{[i_1 , i_2]}\frac{ e_{i_{1} }\odot e_{i_{2} } }{\left \|  e_{i_{1} }\odot e_{i_{2} } \right \| }, \left( S_{\alpha,{K}}\odot S_{\alpha,{K}}\right)
			\sum_{0 \leq i_1 \leq  i_2 \atop i_1 +  i_2 = k_2}a_{[i_1 , i_2]}\frac{ e_{i_{1} }\odot e_{i_{2} } }{\left \|  e_{i_{1} }\odot e_{i_{2} } \right \| }\right\rangle=0, $$
therefore
\begin{equation*}
	\left\|\left( S_{\alpha,{K}}\odot S_{\alpha,{K}}\right)\right\|=\sup\limits_{k\in\mathbb N}\left\{	\left\|\left( S_{\alpha,{K}}\odot S_{\alpha,{K}}\right)
	\sum_{0 \leq i_1 \leq  i_2 \atop i_1 +  i_2 = k}a_{[i_1 , i_2]}\frac{ e_{i_{1} }\odot e_{i_{2} } }{\left \|  e_{i_{1} }\odot e_{i_{2} } \right \| }\right\|\right\}\leq\delta. 
	\tag*{\qed}
\end{equation*}

\subsection{Proof of ``$(1)\Leftrightarrow (3)$"}
In this subsection, we prove $(1)\Leftrightarrow (3)$. First, we need some notations and some lemmas. It will be seen that the lemmas listed below are similar to those in Section 2, however we need more techniques from antisymmetric tensor products to deal with the proofs. 
\begin{lem}\label{lemA2.1}
	For $\mathfrak{i}\in\mathbb N^n$,
	$$\begin{aligned}
		& \left ( S_{\alpha,{l_{1}}}\wedge S_{\alpha,{l_{2}}}\wedge \cdots \wedge S_{\alpha,{l_{n}}}  \right ) \left ( e_{i_{1} }\wedge e_{i_{2} }\wedge\cdots \wedge e_{i_{n} }    \right ) 
		\\ &=\frac{1}{n!} \sum\limits_{\pi  \in \Sigma _{n} }\beta_{i_{1},l_{\pi  (1)}} \beta_{i_{2},l_{\pi  (2)}}\cdots\beta_{i_{n},l_{\pi  (n)}} \left (  e_{i_{1}+l_{\pi  (1)} }\wedge e_{i_{2}+l_{\pi  (2)} }\wedge \cdots \wedge e_{i_{n} +l_{\pi  (n)}} \right ). 
	\end{aligned}$$
\end{lem}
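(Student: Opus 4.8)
The plan is to verify the identity by a direct computation inside the full tensor space $\mathcal{H}^{\otimes n}$, exactly as for Lemma \ref{lem2.1}, the only genuinely new feature being the signs carried by the antisymmetric tensors. First I would record the single uniform action formula $S_{\alpha,t}e_i=\beta_{i,t}e_{i+t}$, valid for all $t\in\mathbb{Z}$, which collects the three cases of \eqref{eq1} (and is consistent with the convention $e_i=0$, $\alpha_i=0$ for $i<0$, so that a factor $\beta_{i,t}$ vanishes precisely when the shift would run off the basis). With this in hand, every tensor factor evaluates as $S_{\alpha,l_{\pi(m)}}e_{i_{\sigma(m)}}=\beta_{i_{\sigma(m)},l_{\pi(m)}}\,e_{i_{\sigma(m)}+l_{\pi(m)}}$ for any pair of permutations $\pi,\sigma\in\Sigma_n$.

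Next I would expand both sides over $\mathcal{H}^{\otimes n}$. Writing the antisymmetric tensor as the image of the antisymmetrization projection, $e_{i_1}\wedge\cdots\wedge e_{i_n}=\frac{1}{n!}\sum_{\sigma\in\Sigma_n}\operatorname{sgn}(\sigma)\,e_{i_{\sigma(1)}}\otimes\cdots\otimes e_{i_{\sigma(n)}}$, and using $\mathrm S_n(S_{\alpha,l_1},\dots,S_{\alpha,l_n})=\frac{1}{n!}\sum_{\pi\in\Sigma_n}S_{\alpha,l_{\pi(1)}}\otimes\cdots\otimes S_{\alpha,l_{\pi(n)}}$, the left-hand side becomes
$$\frac{1}{(n!)^2}\sum_{\pi\in\Sigma_n}\sum_{\sigma\in\Sigma_n}\operatorname{sgn}(\sigma)\Bigl(\prod_{m=1}^{n}\beta_{i_{\sigma(m)},l_{\pi(m)}}\Bigr)\,e_{i_{\sigma(1)}+l_{\pi(1)}}\otimes\cdots\otimes e_{i_{\sigma(n)}+l_{\pi(n)}}.$$
Expanding each wedge on the right-hand side in the same way produces
$$\frac{1}{(n!)^2}\sum_{\pi\in\Sigma_n}\sum_{\tau\in\Sigma_n}\operatorname{sgn}(\tau)\Bigl(\prod_{m=1}^{n}\beta_{i_m,l_{\pi(m)}}\Bigr)\,e_{i_{\tau(1)}+l_{\pi(\tau(1))}}\otimes\cdots\otimes e_{i_{\tau(n)}+l_{\pi(\tau(n))}}.$$
Since the same normalization constant occurs in $e_{i_1}\wedge\cdots\wedge e_{i_n}$ and in each wedge on the right, the overall prefactors agree and the computation is independent of the precise normalization of \cite{QU}.

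Finally I would match the two double sums termwise. Reindexing the scalar product $\prod_m\beta_{i_m,l_{\pi(m)}}$ by $m\mapsto\tau(m)$, a harmless reordering of commuting scalars that introduces no sign, rewrites the right-hand coefficient as $\operatorname{sgn}(\tau)\prod_m\beta_{i_{\tau(m)},l_{\pi(\tau(m))}}$. The substitution $\rho=\pi\circ\tau$ and $\sigma=\tau$, under which $\pi(\tau(m))=\rho(m)$ and $(\pi,\tau)$ ranges over $\Sigma_n\times\Sigma_n$ if and only if $(\rho,\sigma)$ does, then carries this expression term for term onto the left-hand side. The main, and indeed only, obstacle is the sign bookkeeping: in the symmetric setting of Lemma \ref{lem2.1} the reindexing is immediate, whereas here one must check that the two permutation signs $\operatorname{sgn}(\sigma)$ and $\operatorname{sgn}(\tau)$ coincide after the identification $\sigma=\tau$, and that reordering the $\beta$-product creates no additional sign; once this is confirmed, the equality of the two displays is exactly the claimed formula.
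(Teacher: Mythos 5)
Your proof is correct and is exactly the direct computation the paper has in mind: the paper states this lemma without proof, treating it (like its symmetric counterpart, Lemma \ref{lem2.1}) as following from the definitions by direct calculation, and your expansion over $\mathcal{H}^{\otimes n}$ is that calculation carried out in full. The sign bookkeeping does close as you claim, since under the substitution $\sigma=\tau$, $\rho=\pi\circ\tau$ the sign $\operatorname{sgn}(\tau)=\operatorname{sgn}(\sigma)$ carries over unchanged, reordering the scalar $\beta$-factors introduces no sign, and $(\pi,\tau)\mapsto(\rho,\sigma)$ is a bijection of $\Sigma_n\times\Sigma_n$, so the two double sums agree term by term.
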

In this section, we fix $d\geq 1.$ 
Let \begin{equation}\label{wdef}W=\{  \mathfrak{i} \in\mathbb N^n: d\leq i_1< \cdots < i_n\},\end{equation}
$$\mathfrak{W}=\{ \mathfrak{i}+\mathfrak{t} \in\mathbb N^n: \mathfrak{i} \in W, \mathfrak{t} \in\mathbb Z^n, |t_r|\leq |\mathfrak{l}|, i_1+t_{1}\neq\cdots \neq i_n+t_{n}\}$$
and
$$W^{\prime}=\{  \mathfrak{i} \in W: i_1\geq d+|\mathfrak{l}|, i_r- i_{r-1}> 4 |\mathfrak{l}| ~for~all~ 2\leq r\leq n\}.$$
Then we have the following lemma.
\begin{lem}\label{lemma3.2}
For $\mathfrak{j}^{\prime}\in W^{\prime}, \mathfrak{j} \in \mathfrak{W}\backslash W^{\prime},$ we have
 \begin{equation}\label{4.23}\left\langle e_{j_1^{\prime}} \wedge \cdots \wedge e_{j_n^{\prime}}, e_{j_1} \wedge \cdots \wedge e_{j_n}\right\rangle =0.\end{equation} 
\end{lem}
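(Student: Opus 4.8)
The plan is to reduce the statement to a purely combinatorial fact about the index sets, using only the orthonormality of $\{e_i\}$. Recall that for the antisymmetric tensor product built from an ONB, the inner product $\langle e_{j_1'}\wedge\cdots\wedge e_{j_n'},\, e_{j_1}\wedge\cdots\wedge e_{j_n}\rangle$ vanishes unless the two families of indices coincide as sets. Since $\mathfrak{j}'\in W'\subseteq W$ is strictly increasing and $\mathfrak{j}\in\mathfrak{W}$ has pairwise distinct coordinates by the defining condition $j_1\neq\cdots\neq j_n$, both wedge tensors are nonzero and carry $n$ distinct indices; hence the inner product in \eqref{4.23} is nonzero only if $\mathfrak{j}$ is a permutation of $\mathfrak{j}'$. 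So it suffices to prove: if $\mathfrak{j}'\in W'$ and $\mathfrak{j}\in\mathfrak{W}$ share the same underlying set, then in fact $\mathfrak{j}\in W'$, which contradicts $\mathfrak{j}\in\mathfrak{W}\setminus W'$ and therefore forces the inner product to be $0$.

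To carry out this reduction, I write $\mathfrak{j}=\mathfrak{i}+\mathfrak{t}$ with $\mathfrak{i}\in W$ (so $i_1<\cdots<i_n$) and $|t_r|\leq|\mathfrak{l}|$ for every $r$, as guaranteed by the definition of $\mathfrak{W}$. The key quantitative step is to show that the coordinates of $\mathfrak{i}$ are themselves well separated, namely $i_s-i_r>2|\mathfrak{l}|$ for all $r<s$. Indeed, if $i_s-i_r\leq 2|\mathfrak{l}|$ for some $r<s$, then $|j_s-j_r|\leq |i_s-i_r|+|t_s-t_r|\leq 2|\mathfrak{l}|+2|\mathfrak{l}|=4|\mathfrak{l}|$; but $j_r$ and $j_s$ are two distinct elements of the set $\{j_1',\dots,j_n'\}$, whose distinct elements are pairwise more than $4|\mathfrak{l}|$ apart (each consecutive gap exceeds $4|\mathfrak{l}|$ because $\mathfrak{j}'\in W'$, and non-consecutive gaps are sums of these). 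This is a contradiction.

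With the separation $i_s-i_r>2|\mathfrak{l}|$ in hand, for any $r<s$ I obtain $j_s-j_r=(i_s-i_r)+(t_s-t_r)>2|\mathfrak{l}|-2|\mathfrak{l}|=0$, so $\mathfrak{j}$ is strictly increasing. A strictly increasing tuple whose entries form the same set as $\mathfrak{j}'$ must coincide with the unique increasing arrangement of that set, i.e. with $\mathfrak{j}'$ itself; thus $\mathfrak{j}=\mathfrak{j}'\in W'$, the desired contradiction.

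The heart of the argument — and the step I expect to require the most care — is the separation estimate $i_s-i_r>2|\mathfrak{l}|$: it makes precise the intuition that a coordinatewise perturbation bounded by $|\mathfrak{l}|$ cannot reshuffle the order of a tuple whose gaps exceed $4|\mathfrak{l}|$, and the factor $4$ in the definition of $W'$ is chosen exactly so that the combined perturbation of two coordinates (at most $2|\mathfrak{l}|$) can never bridge a genuine gap. Everything else is bookkeeping with the definitions of $W$, $\mathfrak{W}$, $W'$ and the orthonormality of the basis wedges.
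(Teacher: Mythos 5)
Your proposal is correct; every step checks out, including the reduction to set-equality of the index families and the triangle-inequality estimate with the $4|\mathfrak{l}|$ threshold. It rests on exactly the same quantitative core as the paper's proof, but the logical organization is genuinely different. The paper argues directly, by a three-case analysis on $\mathfrak{j}\in\mathfrak{W}\setminus W^{\prime}$: (i) $\mathfrak{j}\in W\setminus W^{\prime}$, where orthogonality is immediate because two distinct strictly increasing tuples have distinct index sets; (ii) $\mathfrak{j}\in\mathfrak{W}\setminus W$ with some coordinate below $d$, where orthogonality follows since every $j_r^{\prime}\geq d$; and (iii) the remaining case, where $\mathfrak{j}$ must contain an adjacent inverted pair $j_{r_0-1}>j_{r_0}$, for which the paper shows $0<j_{r_0-1}-j_{r_0}\leq 4|\mathfrak{l}|$ — impossible for two distinct members of the set $\{j_1^{\prime},\dots,j_n^{\prime}\}$. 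You instead assume the index sets coincide and derive a contradiction: all pairwise gaps of the underlying tuple $\mathfrak{i}\in W$ exceed $2|\mathfrak{l}|$, hence $\mathfrak{j}$ is strictly increasing, hence $\mathfrak{j}=\mathfrak{j}^{\prime}\in W^{\prime}$, contradicting $\mathfrak{j}\notin W^{\prime}$. Your version buys uniformity: the paper's cases (i) and (ii) disappear (in particular you never need the lower bound $d$), and you prove a slightly stronger intermediate fact (separation of \emph{all} pairs of $\mathfrak{i}$, not just an adjacent one). What the paper's route buys is directness — it exhibits concretely which pair of indices witnesses that the two sets differ, and its case (iii) computation is essentially the contrapositive of your separation claim restricted to adjacent coordinates. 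Either proof would serve; yours is arguably the cleaner write-up of the same idea.
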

\begin{proof}
It is obvious that $W^\prime\subset W\subset\mathfrak{W}$ and for $j\in W\setminus W^\prime$, the orthogonality \eqref{4.23} is trivial. Next, assume $j\in \mathfrak{W}\setminus W$. Then \eqref{4.23} is also trivial if $j_{r_0}  <  d$ for some $1\leq r_0\leq n$. 

At last, assume that for all $1 \leq r\leq n,$ $j_{r}\geq d,$
i.e. there exists an integer $2\leq r_0\leq n$ such that $$j_{r_0-1}=i_{r_0-1}+t_{r_0-1}>j_{r_0}=i_{r_0}+t_{r_0}.$$
Hence $$i_{r_0}-i_{r_0-1} < t_{r_0-1}-t_{r_0}\leq 2|\mathfrak{l}|,$$
which implies that $$j_{r_0-1}-j_{r_0}=i_{r_0-1}+t_{r_0-1}-(i_{r_0}+t_{r_0})\leq |i_{r_0-1}-i_{r_0}|+|t_{r_0}-t_{r_0-1}|\leq 4|\mathfrak{l}|.$$
Notice that $j_t^{\prime}- j_{t-1}^{\prime}> 4 |\mathfrak{l}|$ for all $2\leq t\leq n.$ Hence 
\eqref{4.23} holds.
  \end{proof}
  
  For a set $A\subseteq \mathbb N^n,$ write \begin{equation}\label{wkn}A_{(k,n)}=^{\#  }\{\mathfrak{i}\in A: |\mathfrak{i}|=k\}.\end{equation}
Then we have the following lemma.
\begin{lem}
  For $k> 0$ sufficiently large, there exists a constant $C_1>0$, such that 
  \begin{equation}\label{eqA0311189}\mathfrak{W}_{(k,n)}- W^{\prime}_{(k,n)}\leq C_1k^{n-2}.\end{equation}
\end{lem}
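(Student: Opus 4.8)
The plan is to exploit the chain of inclusions $W'\subseteq W\subseteq\mathfrak{W}$ (already noted in the proof of Lemma \ref{lemma3.2}), which lets me split the count into two non-negative pieces,
$$\mathfrak{W}_{(k,n)}- W'_{(k,n)}=\bigl(\mathfrak{W}_{(k,n)}-W_{(k,n)}\bigr)+\bigl(W_{(k,n)}-W'_{(k,n)}\bigr),$$
and bound each by a multiple of $k^{n-2}$. The only counting input I need is the elementary estimate $^{\#}\{\mathfrak{i}\in\mathbb N^n:|\mathfrak{i}|=k\}\le (k+1)^{n-1}$, together with its consequence that imposing one extra linear relation on the coordinates (fixing a coordinate, or fixing a gap between two coordinates) removes one degree of freedom and hence drops the count to $O(k^{n-2})$.

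First I would estimate $W_{(k,n)}-W'_{(k,n)}={}^{\#}\{\mathfrak{i}\in W\setminus W':|\mathfrak{i}|=k\}$, the strictly increasing tuples of sum $k$ having at least one short consecutive gap. Writing
$$W\setminus W'=\bigcup_{r=2}^{n}\bigl\{\mathfrak{i}\in W: i_r-i_{r-1}\le 4|\mathfrak{l}|\bigr\},$$
I fix $r$ and the gap value $g=i_r-i_{r-1}\in\{1,\dots,4|\mathfrak{l}|\}$. For each such choice the coordinates obey the additional relation $i_r=i_{r-1}+g$ on top of $|\mathfrak{i}|=k$, so that slice has $O(k^{n-2})$ points; since there are only finitely many $r$ and finitely many admissible $g$, the whole difference is $O(k^{n-2})$.

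Next I would bound $\mathfrak{W}_{(k,n)}-W_{(k,n)}={}^{\#}\{\mathfrak{j}\in\mathfrak{W}\setminus W:|\mathfrak{j}|=k\}$. Every such $\mathfrak{j}$ admits a representation $\mathfrak{j}=\mathfrak{i}+\mathfrak{t}$ with $\mathfrak{i}\in W$ and $|t_r|\le|\mathfrak{l}|$, so it suffices to count admissible pairs $(\mathfrak{i},\mathfrak{t})$ and then multiply by the bounded number $(2|\mathfrak{l}|+1)^n$ of perturbations. The decisive point — and the place where the separation constant $4|\mathfrak{l}|$ in the definition of $W'$ enters — is that if $\mathfrak{i}\in W'$ then $j_r-j_{r-1}=(i_r-i_{r-1})+(t_r-t_{r-1})>4|\mathfrak{l}|-2|\mathfrak{l}|>0$, so $\mathfrak{j}$ is automatically strictly increasing and therefore lies in $W$ unless $j_1=i_1+t_1<d$, which forces $i_1<d+|\mathfrak{l}|$. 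Hence any $\mathfrak{j}\in\mathfrak{W}\setminus W$ is a perturbation either of some $\mathfrak{i}\in W\setminus W'$ — of which there are $O(k^{n-2})$ by the previous step, summed over the finitely many values $|\mathfrak{i}|\in[k-n|\mathfrak{l}|,k+n|\mathfrak{l}|]$ — or of some $\mathfrak{i}\in W'$ whose first coordinate lies in the finite range $\{d,\dots,d+|\mathfrak{l}|-1\}$, of which there are again $O(k^{n-2})$ because fixing $i_1$ removes one degree of freedom. Thus this difference is $O(k^{n-2})$ as well, and adding the two bounds produces the constant $C_1$.

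I expect the main obstacle to be the bookkeeping in this second step: rather than enumerate $\mathfrak{W}\setminus W$ directly, one must argue that every order-violating or boundary-violating perturbation can only originate from the two low-dimensional families above. The separation hypothesis $i_r-i_{r-1}>4|\mathfrak{l}|$ defining $W'$ is precisely what makes this dichotomy clean, so I would isolate the order-preservation claim (perturbations of $W'$-tuples stay strictly increasing) as the technical heart of the argument and state it explicitly before running the count.
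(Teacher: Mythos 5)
Your proof is correct and takes essentially the same route as the paper: the same telescoping decomposition $\mathfrak{W}_{(k,n)}-W'_{(k,n)}=\bigl(\mathfrak{W}_{(k,n)}-W_{(k,n)}\bigr)+\bigl(W_{(k,n)}-W'_{(k,n)}\bigr)$, with each difference bounded by $O(k^{n-2})$ because membership forces one coordinate (or one coordinate gap) into a bounded set of values. The only cosmetic difference is in the first term, where the paper constrains $\mathfrak{j}\in\mathfrak{W}\setminus W$ directly (such a $\mathfrak{j}$ has either a coordinate below $d$ or an inverted adjacent pair with gap at most $4|\mathfrak{l}|$, by the computation in the proof of Lemma \ref{lemma3.2}), while you count source--perturbation pairs $(\mathfrak{i},\mathfrak{t})$; both arguments rest on the same $4|\mathfrak{l}|$-separation inequality and yield the same bound.
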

\begin{proof}
For $\mathfrak{j}\in \mathfrak{W}\backslash W,$ by the proof of Lemma \ref{lemma3.2}, 
it can be shown easily that there exists a constant $C_1^{\prime}>0$, such that 
 \begin{equation}\label{eqA034689}\mathfrak{W}_{(k,n)}- W_{(k,n)}\leq C_1^{\prime}\left ( k+1 \right ) ^{n-2}.\end{equation}
 For $\mathfrak{j}\in W\backslash W^{\prime},$ by the definitions of $W$ and $W^{\prime},$ there exists an integer $2\leq r_1\leq n$ such that  $0< j_{r_1-1}-j_{r_1}\leq 4|\mathfrak{l}|$ or $j_1<d+|\mathfrak{l}|,$ which implies that there exists a constant $C_2^{\prime}>0$, such that 
 \begin{equation}\label{eqA0897689}W_{(k,n)}- W^{\prime}_{(k,n)}\leq C_2^{\prime}\left ( k+1 \right ) ^{n-2}.\end{equation}
Therefore by \eqref{eqA034689} and \eqref{eqA0897689}, \eqref{eqA0311189} holds.
\end{proof}
For $\mathfrak{i} \in\mathbb N^n,$ set
\begin{equation}\label{A78}
	\mathcal{R}_{\mathfrak{i}}=\left\{\mathfrak{j} \in\mathbb N^n: d\leq j_1< \cdots < j_n,  \exists \, \pi \in \Sigma_{n}~s.t.~  e_{j_1+l_{\pi(1)}} \wedge \cdots \wedge e_{j_n+l_{\pi(n)}}=e_{i_1} \wedge \cdots \wedge e_{i_n}\right\}.
\end{equation}
Then it is easy to see that
\begin{equation}\label{eq31211}
	^{\#  }	\mathcal{R}_{\mathfrak{i}  }  \le
	\ ^{\#  }	\left \{ \mathfrak{l}_\pi:\pi \in \Sigma_{n}\ \right \}   = \mathscr{M}.
\end{equation}
Set
 \begin{equation}\label{eqA311}
	\mathcal{A}_{k} = \begin{cases}
		\left \{ \mathfrak{i}\in \mathfrak{W}:  | \mathfrak{i}  |=k, \ ^{\#  }	\mathcal{R}_{\mathfrak{i}} <\mathscr{M}    \right \},& k \geq 0, \\
		\emptyset,& k < 0.
	\end{cases}
\end{equation}
Then we have the following lemma.
\begin{lem}\label{lemma33}
	For $k>0$ sufficiently large, if $l_{1},  l_{2},\cdots,l_{n}$ are not all equal, then there exists a constant $M>0$, such that $^{\#  }\mathcal{A}_{k} \le Mk^{n-2}$.
\end{lem}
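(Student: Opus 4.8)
The plan is to mirror the proof of Lemma \ref{lemmas2.2}, but to exploit the decomposition $\mathfrak{W}=(\mathfrak{W}\setminus W')\sqcup W'$ together with the estimate $\mathfrak{W}_{(k,n)}-W'_{(k,n)}\leq C_1k^{n-2}$ just established. Writing $\mathcal{A}_k=(\mathcal{A}_k\cap(\mathfrak{W}\setminus W'))\sqcup(\mathcal{A}_k\cap W')$, the first piece is immediate: since $\mathcal{A}_k\subseteq\mathfrak{W}$ and each of its elements satisfies $|\mathfrak{i}|=k$, we get $^{\#}(\mathcal{A}_k\cap(\mathfrak{W}\setminus W'))\leq\mathfrak{W}_{(k,n)}-W'_{(k,n)}\leq C_1k^{n-2}$. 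So the whole problem reduces to bounding $^{\#}(\mathcal{A}_k\cap W')$.

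For the second piece, the key structural observation is that on $W'$ the large-gap condition $i_r-i_{r-1}>4|\mathfrak{l}|$ prevents any two distinct shifts of $\mathfrak{i}$ from colliding. First I would note that $|l_{\pi(r)}|\leq|\mathfrak{l}|$ for every $r$, so for $\mathfrak{i}\in W'$ and any $\pi\in\Sigma_n$ the tuple $(i_1-l_{\pi(1)},\dots,i_n-l_{\pi(n)})$ is still strictly increasing, because each consecutive difference is at least $(i_r-i_{r-1})-2|\mathfrak{l}|>4|\mathfrak{l}|-2|\mathfrak{l}|>0$. Consequently the sorted source attached to $\pi$ is exactly $(i_1-l_{\pi(1)},\dots,i_n-l_{\pi(n)})$, and two permutations produce the same source if and only if $\mathfrak{l}_\pi=\mathfrak{l}_{\pi'}$. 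Hence distinct elements of $\{\mathfrak{l}_\pi:\pi\in\Sigma_n\}$ yield distinct sources, and no ``coincidence'' degeneracy can occur inside $W'$.

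It follows that for $\mathfrak{i}\in W'$ one can have $^{\#}\mathcal{R}_{\mathfrak{i}}<\mathscr{M}$ only when some source is invalid, i.e. when $i_1-l_{\pi(1)}<d$ for some $\pi$; since the source is increasing this forces $i_1<d+|\mathfrak{l}|$. Thus $\mathcal{A}_k\cap W'$ is contained in $\{\mathfrak{i}\in W':|\mathfrak{i}|=k,\ i_1<d+|\mathfrak{l}|\}$. Fixing the bounded value of $i_1$ (at most $d+|\mathfrak{l}|$ choices) and letting $i_2<\cdots<i_n$ range over tuples summing to $k-i_1$ gives at most $C_2k^{n-2}$ possibilities, so $^{\#}(\mathcal{A}_k\cap W')\leq C_2k^{n-2}$. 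Adding the two estimates yields $^{\#}\mathcal{A}_k\leq(C_1+C_2)k^{n-2}=Mk^{n-2}$, as desired.

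The step I expect to be the main obstacle is the structural claim of the middle paragraph: verifying carefully that the gap condition defining $W'$ simultaneously preserves the strict ordering after subtracting an arbitrary permutation of $\mathfrak{l}$ and rules out collisions between distinct shifts, so that the only residual degeneracy on $W'$ is the boundary effect ``$i_1$ small''. This plays the role of the interior/boundary ($D_k$/$B_k$) dichotomy used for Lemma \ref{lemmas2.2}; the difference is that here it is the hypothesis $i_r-i_{r-1}>4|\mathfrak{l}|$, rather than a separate coincidence-counting argument, that isolates the two sources of degeneracy, after which the cardinality bounds are routine.
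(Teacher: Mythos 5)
Your proof is correct, and its core differs from the paper's. Both arguments begin identically, splitting off $\mathcal{A}_k\cap(\mathfrak{W}\setminus W')$ via \eqref{eqA0311189}; but for the remaining piece on $W'$ the paper does not argue directly. Instead it proves the inclusion $\mathcal{A}_k'\subseteq A_k'$ by comparing $\mathcal{R}_{\mathfrak{i}}$ with the symmetric-case set $R_{\mathfrak{i}}$ (via \eqref{eqBURHUA} and \eqref{eqRHUA}) and then quotes the bound $^{\#}A_k'\le C_2k^{n-2}$ already established in the proof of Lemma \ref{lemmas2.2}, so the boundary/coincidence ($B_k$/$D_k$) counting is inherited rather than redone. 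You instead exploit the gap condition of $W'$ to show something sharper and self-contained: all $\mathscr{M}$ candidate preimages $(i_1-l_{\pi(1)},\dots,i_n-l_{\pi(n)})$ are automatically strictly increasing and pairwise distinct across distinct $\mathfrak{l}_\pi$, so the $D_k$-type coincidence degeneracy simply cannot occur inside $W'$, and $^{\#}\mathcal{R}_{\mathfrak{i}}<\mathscr{M}$ forces the boundary condition $i_1<d+|\mathfrak{l}|$, which you count directly. What each approach buys: the paper's route is shorter on the page because it recycles Lemma \ref{lemmas2.2}; yours avoids that dependency and is actually more careful on one point --- the paper's inclusion \eqref{eqRHUA} tacitly asserts $d\le i_1-l_{\pi(1)}$, which can fail when $i_1$ is within $|\mathfrak{l}|$ of $d$ (those tuples are harmless only because they form an $O(k^{n-2})$ boundary set), and your argument is precisely the one that isolates and counts this boundary set explicitly.
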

\begin{proof} 
For $k>0,$ set 
\begin{equation}\label{eqA3111}
	\mathcal{A}_{k}^{\prime} =
		\left \{ \mathfrak{i} \in  W^{\prime}: | \mathfrak{i}  |=k \ and \ ^{\#  }	\mathcal{R}_{\mathfrak{i}  } <\mathscr{M}    \right \}.
\end{equation}
Obviously, 
$$\mathcal{A}_{k}\backslash \mathcal{A}_{k}^{\prime}=\left \{ \mathfrak{i} \in  \mathfrak{W} \backslash W^{\prime}: | \mathfrak{i}  |=k \ and \ ^{\#  }	\mathcal{R}_{\mathfrak{i}  } <\mathscr{M}    \right \}.$$
Hence by \eqref{eqA0311189}, there exists a constant $C_{1} > 0$ such that for $k> 0$ sufficiently large,
\begin{equation}\label{eqA03111}^{\#  }\mathcal{A}_{k}\leq ^{\#  }\mathcal{A}_{k}^{\prime}+ C_1k^{n-2}.\end{equation}
Recall that $$A_{k}^{\prime}=\left \{  \mathfrak{i} \in \mathbb{N}^n: | \mathfrak{i}  |=k \ and \ ^{\#  }	R_{\mathfrak{i}  } <\mathscr{M}    \right \},~ k \geq 0$$ in \eqref{eq31001}.
We claim that \begin{equation}\label{eqA003111}\mathcal{A}_{k}^{\prime} \subseteq  A_{k}^{\prime}.\end{equation}
In fact, for $\mathfrak{i} \in \mathcal{A}_{k}^{\prime},$ by the definition of $R_{\mathfrak{i}}$ in \eqref{eq2}, 
\begin{equation}\label{eqBURHUA}	R_{\mathfrak{i}  }\subseteq \{[\mathfrak{i}- \mathfrak{l}_{\pi}] \in \mathbb N^n/\Sigma_n : \pi \in \Sigma_n\}.\end{equation}
Since $\mathfrak{i} \in \mathcal{A}_{k}^{\prime}\subseteq W^{\prime}\subseteq W,$ we have $i_r- i_{r-1}> 4 |\mathfrak{l}|$ for all $2\leq r\leq n$ and $i_1\geq d+|\mathfrak{l}|$, hence
$d\leq i_1-l_{\pi(1)}<\cdots<i_n-l_{\pi(n)}$ for every $\pi \in \Sigma_n.$
Then by \eqref{A78}, it is easy to see that
\begin{equation}\label{eqRHUA}\{\mathfrak{i}-\mathfrak{l}_{\pi} \in \mathbb N^n: \pi \in \Sigma_n\} \subseteq \mathcal{R}_{\mathfrak{i}}.\end{equation}
Therefore by \eqref{eqBURHUA} and \eqref{eqRHUA}, we have $ ^{\#  }	R_{\mathfrak{i}  } \leq
 ^{\#  }	\mathcal{R}_{\mathfrak{i}  },$ which implies that \eqref{eqA003111} holds.
The claim is proved.
By \eqref{eq4151}, \eqref{eq4} and \eqref{eq5}, there exists a constant $C_2>0$, such that $^{\#  }A_{k}^{\prime}\leq C_2\left ( k+1 \right ) ^{n-2}.$ Hence 
\eqref{eqA03111} and \eqref{eqA003111} ensure that for $k> 0$ sufficiently large, there exists a constant $M>0$, such that $^{\#  }\mathcal{A}_{k} \le Mk^{n-2}$.
\end{proof}
For $m=0,1,2,\cdots ,\mathscr{M}$, set
\begin{equation}\label{eqA3}
	\mathcal{E}_{r,m} = \begin{cases}
		\left \{ \mathfrak{i}\in \mathfrak{W} :\left | \mathfrak{i}  \right |=r \ and \   ^{\#  } \mathcal{R}_{\mathfrak{i}  }  =m   \right \},& r \geq 0, \\
		\emptyset,& r < 0.
	\end{cases}
\end{equation}
For  simplicity, for $\mathfrak{i},\mathfrak{j}\in \mathbb N^n$, let
\begin{equation}\label{eqA11}
	\mathcal{N}_{\mathfrak{j}, \mathfrak{i}  }=\left \{ \pi\in\Sigma_{n}:e_{i_1+l_{\pi(1)}} \wedge \cdots \wedge e_{i_n+l_{\pi(n)}}=e_{j_1} \wedge \cdots \wedge e_{j_n} \right \}.
\end{equation}
		Then for $k> 0$ sufficiently large, $\mathfrak{j}  \in \mathcal{E}_{k+S_\mathfrak{l},\mathscr{M} } $ and	$\mathfrak{i} \in \mathcal{R}_{\mathfrak{j} },$
    we have $\mathcal{N}_{\mathfrak{j},\mathfrak{i} }  \neq \emptyset.$
	Therefore by (\ref{eqA11}),
	$$ \begin{aligned}
		^{\#  } \mathcal{N}_{\mathfrak{j},\mathfrak{i} }  &= \ ^{\#  } \left \{\pi \in \Sigma_{n}:e_{i_1+l_{\pi(1)}} \wedge \cdots \wedge e_{i_n+l_{\pi(n)}}=e_{j_1} \wedge \cdots \wedge e_{j_n} \right \}
		\ge \  n_{1}!\cdots n_{k}!.
	\end{aligned} $$
	Thus	
	\begin{eqnarray}\label{eqA34}
		\left \| e_{i_1} \wedge \cdots \wedge e_{i_n}  \right \|&=&\sqrt{\frac{1}{n!}}
		\leq\frac{ ^{\#  } \mathcal{N}_{\mathfrak{j},\mathfrak{i} }}{n_1!n_2!\cdots n_k!\sqrt{n!}}
		= \frac{^{\#  } \mathcal{N}_{\mathfrak{j},\mathfrak{i} }\mathscr{M}}{n!\sqrt{n!}}.
	\end{eqnarray}

For $0< \varepsilon < 1$, set
\begin{equation}
	\tilde{\mathcal{A}}_{k+S_\mathfrak{l},\mathscr{M},\varepsilon } =\left \{ \mathfrak{j}  \in \mathcal{E}_{k+S_\mathfrak{l},\mathscr{M} }:\Gamma_{j_{m},-l_{\pi  (m)}} > 1 - \varepsilon,  \forall 1 \leq m \leq n, \forall \, \pi \in \Sigma_{n}   \right \}
\end{equation}
and
\begin{equation}\label{xianzai4}\check{\mathcal{A}}  _{k+S_\mathfrak{l},\mathscr{M},\varepsilon } =\left \{ \mathfrak{j}  \in \mathcal{E}_{k+S_\mathfrak{l},\mathscr{M} }:\exists \, \pi \in \Sigma_{n} \text{ and } \exists \, 1 \le m \le n ~s.t.~
\Gamma_{j_{m},-l_{\pi  (m)}} \le 1  - \varepsilon \right \}.\end{equation}
It is easy to see that
\begin{equation}\label{eqA16}
	\mathcal{E}_{k+S_\mathfrak{l},\mathscr{M} } = \tilde{\mathcal{A}}_{k+S_\mathfrak{l},\mathscr{M},\varepsilon } \sqcup \check{\mathcal{A}}  _{k+S_\mathfrak{l},\mathscr{M},\varepsilon }.
\end{equation}
The following lemma can be derived from \eqref{eq37} and \eqref{eq39}, along with the fact that
$$\check{\mathcal{A}}_{k+S_\mathfrak{l},\mathscr{M} ,\varepsilon}\subseteq \left \{ \mathfrak{j} \in \mathbb{N}^{n}:|\mathfrak{j}|=k+S_\mathfrak{l}, 
			\exists \, \pi \in \Sigma_{n} \text{ and } \exists \, 1 \le m \le n ~s.t.~
			\Gamma_{j_{m},-l_{\pi  (m)}} \le 1  - \varepsilon \right\}.$$
\begin{lem}\label{lemmaA.6}
	If $\left \{ \alpha _{k}  \right \} _{k=0}^{\infty }$ satisfies the regularity condition and $\lim\limits_{k\to\infty}|\alpha_k| = 1$, then there exists a positive constant $C$, such that for sufficiently large $ k >0$, 
	\begin{equation}\label{xianzai3}^{\#  } \check{\mathcal{A}}_{k+S_\mathfrak{l},\mathscr{M} ,\varepsilon}  \le C k ^{n-2}.\end{equation}
\end{lem}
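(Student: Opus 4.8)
The plan is to piggyback entirely on the proof of Lemma \ref{lemma2.6}, since the displayed set inclusion reduces the antisymmetric count to an ordered-tuple count that has already been carried out there. First I would observe that the inclusion stated just above the lemma places $\check{\mathcal{A}}_{k+S_\mathfrak{l},\mathscr{M},\varepsilon}$ inside the set appearing on the right-hand side of \eqref{eq37}, namely $\bigcup_{m=1}^{n}\check{B}_{k+S_\mathfrak{l},\mathscr{M},\varepsilon,m}$, where $\check{B}_{k+S_\mathfrak{l},\mathscr{M},\varepsilon,m}$ is exactly the set introduced in that earlier proof. Thus it suffices to bound the cardinality of each $\check{B}_{k+S_\mathfrak{l},\mathscr{M},\varepsilon,m}$ and sum over $m$.

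Next I would invoke regularity in the same way as before. Since $\lim_{i\to\infty}|\alpha_i|=1$ together with the regularity condition forces $|\alpha_i|\le 1$ for every $i$, for each $0<\varepsilon<1$ there exists $N>0$ with $|\alpha_i|>1-\varepsilon$ whenever $i>N$, as in \eqref{eq:87}. For $\mathfrak{j}\in\check{B}_{k+S_\mathfrak{l},\mathscr{M},\varepsilon,m}$ there is a $\pi\in\Sigma_n$ with $l_{\pi(m)}\neq 0$ and $\Gamma_{j_m,-l_{\pi(m)}}\le 1-\varepsilon$; unpacking $\Gamma$ via \eqref{eq:88} shows that some factor $|\alpha_\bullet|$ entering $\Gamma_{j_m,-l_{\pi(m)}}$ is $\le 1-\varepsilon$, and since every index exceeding $N$ contributes a factor larger than $1-\varepsilon$, this pins the coordinate $j_m$ to the finite range $0\le j_m\le N+h+1$, with $h=\max\{|l_1|,\dots,|l_n|\}$ as in \eqref{eq6}.

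Then the estimate \eqref{eq39} applies verbatim: fixing the now-bounded coordinate $j_m$ and imposing $|\mathfrak{j}|=k+S_\mathfrak{l}$ leaves $n-1$ coordinates subject to a single linear constraint, so that $^{\#}\check{B}_{k+S_\mathfrak{l},\mathscr{M},\varepsilon,m}\le (N+h+1)(k+S_\mathfrak{l})^{n-2}$. Summing over the $n$ values of $m$ and absorbing the fixed constants (recall $n$, $h$, and $N$ do not depend on $k$) into a single $C>0$ gives, for $k$ large, $^{\#}\check{\mathcal{A}}_{k+S_\mathfrak{l},\mathscr{M},\varepsilon}\le n(N+h+1)(k+S_\mathfrak{l})^{n-2}\le Ck^{n-2}$, which is \eqref{xianzai3}.

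I do not expect a genuine obstacle here; the substantive work was already done for the symmetric case. The only point that actually needs verification is the displayed inclusion that opens the argument, and this is easy: bounding $\check{\mathcal{A}}$ from above by the larger set of all tuples satisfying the $\Gamma$-smallness condition simply discards the distinctness and ordering constraints built into $\mathfrak{W}$, which can only shrink the count. Once that inclusion is noted, the counting is identical to the symmetric setting.
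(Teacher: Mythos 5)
Your proof is correct and follows exactly the route the paper intends: the paper itself derives Lemma \ref{lemmaA.6} by citing the inclusion of $\check{\mathcal{A}}_{k+S_\mathfrak{l},\mathscr{M},\varepsilon}$ into the set of all tuples $\mathfrak{j}\in\mathbb{N}^n$ with $|\mathfrak{j}|=k+S_\mathfrak{l}$ satisfying the $\Gamma$-smallness condition, and then reusing \eqref{eq37} and \eqref{eq39} from the symmetric case. You have merely written out in full the details the paper leaves implicit, so there is nothing to correct.
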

For $m=0,1,2,\cdots ,\mathscr{M}$, set
\begin{equation}\label{eqA3511}
	\mathcal{E}_{r,m}^{\prime} = \begin{cases}
		\left \{ \mathfrak{i} \in W^{\prime}, \left | \mathfrak{i}  \right |=r \ and \   ^{\#  } \mathcal{R}_{\mathfrak{i}  }  =m   \right \},& r \geq 0, \\
		\emptyset,& r < 0,
	\end{cases}
\end{equation}
and for $0< \varepsilon < 1$, set
\begin{equation}\label{eq316}
\tilde{\mathcal{A}}_{k+S_\mathfrak{l},\mathscr{M},\varepsilon }^{\prime} =
\tilde{\mathcal{A}}_{k+S_\mathfrak{l},\mathscr{M},\varepsilon } \cap \mathcal{E}_{k+S_\mathfrak{l},\mathscr{M} }^{\prime}
\end{equation}
and $$\check{\mathcal{A}}^{\prime}_{k+S_\mathfrak{l},\mathscr{M},\varepsilon }=\check{\mathcal{A}}_{k+S_\mathfrak{l},\mathscr{M},\varepsilon } \cap \mathcal{E}_{k+S_\mathfrak{l},\mathscr{M} }^{\prime}.$$
Then we have following lemma.
\begin{lem}\label{lemA2.7}
For $0 < \varepsilon < 1$, we have
	$$ \lim\limits_{k\to \infty}\frac{^{\#  } \tilde{\mathcal{A}}_{k+S_\mathfrak{l},\mathscr{M},\varepsilon }^{\prime} }{W_{(k,n)} }= 1.$$
\end{lem}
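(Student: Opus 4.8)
The plan is to run the argument of Lemma \ref{lem2.7} in the antisymmetric setting, with the partition count $P(k,n)$ replaced by the counting function $W_{(k,n)}$, the family $E_{r,m}$ replaced by the $W^{\prime}$-restricted family $\mathcal{E}_{r,m}^{\prime}$ of \eqref{eqA3511}, and the polynomial error bounds supplied by Lemma \ref{lemma33} and Lemma \ref{lemmaA.6} in place of their symmetric counterparts. The aim is to express ${}^{\#}\tilde{\mathcal{A}}_{k+S_\mathfrak{l},\mathscr{M},\varepsilon}^{\prime}$ as $W^{\prime}_{(k+S_\mathfrak{l},n)}$ minus several terms of size $O(k^{n-2})$, and then to divide by $W_{(k,n)}$, which I will show grows like $k^{n-1}$.

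First I would record the relevant partition. Since ${}^{\#}\mathcal{R}_{\mathfrak{i}}\in\{0,1,\dots,\mathscr{M}\}$ for every $\mathfrak{i}$ by \eqref{eq31211}, the sets $\mathcal{E}_{k+S_\mathfrak{l},m}^{\prime}$ with $0\le m\le\mathscr{M}$ partition $\{\mathfrak{i}\in W^{\prime}:|\mathfrak{i}|=k+S_\mathfrak{l}\}$, so that $\sum_{m=0}^{\mathscr{M}}{}^{\#}\mathcal{E}_{k+S_\mathfrak{l},m}^{\prime}=W^{\prime}_{(k+S_\mathfrak{l},n)}$. Moreover $\mathcal{E}_{k+S_\mathfrak{l},0}^{\prime}=\emptyset$, because for $\mathfrak{i}\in W^{\prime}$ the inclusion $\{\mathfrak{i}-\mathfrak{l}_{\pi}:\pi\in\Sigma_n\}\subseteq\mathcal{R}_{\mathfrak{i}}$ from \eqref{eqRHUA} already forces ${}^{\#}\mathcal{R}_{\mathfrak{i}}\ge1$. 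Hence
$${}^{\#}\mathcal{E}_{k+S_\mathfrak{l},\mathscr{M}}^{\prime}=W^{\prime}_{(k+S_\mathfrak{l},n)}-\sum_{m=1}^{\mathscr{M}-1}{}^{\#}\mathcal{E}_{k+S_\mathfrak{l},m}^{\prime}.$$
Intersecting the splitting \eqref{eqA16} with $W^{\prime}$ and using the definition \eqref{eq316} gives the disjoint decomposition $\mathcal{E}_{k+S_\mathfrak{l},\mathscr{M}}^{\prime}=\tilde{\mathcal{A}}_{k+S_\mathfrak{l},\mathscr{M},\varepsilon}^{\prime}\sqcup(\check{\mathcal{A}}_{k+S_\mathfrak{l},\mathscr{M},\varepsilon}\cap W^{\prime})$, whence
$${}^{\#}\tilde{\mathcal{A}}_{k+S_\mathfrak{l},\mathscr{M},\varepsilon}^{\prime}=W^{\prime}_{(k+S_\mathfrak{l},n)}-\sum_{m=1}^{\mathscr{M}-1}{}^{\#}\mathcal{E}_{k+S_\mathfrak{l},m}^{\prime}-{}^{\#}\bigl(\check{\mathcal{A}}_{k+S_\mathfrak{l},\mathscr{M},\varepsilon}\cap W^{\prime}\bigr).$$

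Next I would bound the error terms. For $1\le m\le\mathscr{M}-1$ one has $\mathcal{E}_{k+S_\mathfrak{l},m}^{\prime}\subseteq\mathcal{A}_{k+S_\mathfrak{l}}$ (since $W^{\prime}\subseteq\mathfrak{W}$ and $m<\mathscr{M}$), so Lemma \ref{lemma33} yields $\sum_{m=1}^{\mathscr{M}-1}{}^{\#}\mathcal{E}_{k+S_\mathfrak{l},m}^{\prime}\le M(k+S_\mathfrak{l})^{n-2}$; and since $\check{\mathcal{A}}_{k+S_\mathfrak{l},\mathscr{M},\varepsilon}\cap W^{\prime}\subseteq\check{\mathcal{A}}_{k+S_\mathfrak{l},\mathscr{M},\varepsilon}$, Lemma \ref{lemmaA.6} gives ${}^{\#}(\check{\mathcal{A}}_{k+S_\mathfrak{l},\mathscr{M},\varepsilon}\cap W^{\prime})\le Ck^{n-2}$. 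Finally the chain $W^{\prime}\subseteq W\subseteq\mathfrak{W}$ together with \eqref{eqA0311189} (applied with $k$ replaced by $k+S_\mathfrak{l}$) gives $0\le W_{(k+S_\mathfrak{l},n)}-W^{\prime}_{(k+S_\mathfrak{l},n)}\le C_1(k+S_\mathfrak{l})^{n-2}$, so $W^{\prime}_{(k+S_\mathfrak{l},n)}=W_{(k+S_\mathfrak{l},n)}-O(k^{n-2})$.

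It then remains to understand the growth of $W_{(k,n)}$. The substitution $i_r\mapsto i_r-(r-1)-d$ is a bijection from $\{\mathfrak{i}\in W:|\mathfrak{i}|=k\}$ onto the weakly increasing tuples $0\le m_1\le\cdots\le m_n$ with $\sum_r m_r=k-\binom{n}{2}-nd$, so $W_{(k,n)}=P\bigl(k-\binom{n}{2}-nd,\,n\bigr)$; in particular $W_{(k,n)}$ agrees for large $k$ with a polynomial of degree $n-1$ in $k$, so $k^{n-2}/W_{(k,n)}\to0$, and Lemma \ref{lemma2.5} applied finitely many times gives $W_{(k+S_\mathfrak{l},n)}/W_{(k,n)}\to1$. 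Dividing the last displayed identity by $W_{(k,n)}$ and letting $k\to\infty$, the leading term tends to $1$ while all three error contributions tend to $0$, which proves the claim. I expect the main obstacle to be purely combinatorial bookkeeping: confirming that every discrepancy among $W^{\prime}$, $W$, $\mathfrak{W}$ and among $\mathcal{E}^{\prime}_{\cdot,m}$, $\check{\mathcal{A}}_{\cdot}$, $\tilde{\mathcal{A}}^{\prime}_{\cdot}$ is genuinely of order $k^{n-2}$, and pinning down the degree-$(n-1)$ growth and shift-invariance of $W_{(k,n)}$ so that these errors are negligible against the denominator.
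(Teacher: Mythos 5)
Your proposal follows, in substance, the same route as the paper's proof: a counting decomposition of the level set at $|\mathfrak{i}|=k+S_{\mathfrak{l}}$ into the families $\mathcal{E}_{\cdot,m}$, polynomial error bounds of order $k^{n-2}$ supplied by Lemma \ref{lemma33}, Lemma \ref{lemmaA.6} and \eqref{eqA0311189}, and the degree-$(n-1)$ growth of the denominator via partition asymptotics and Lemma \ref{lemma2.5}. The organizational differences are mild: the paper first proves $\lim_k {}^{\#}\tilde{\mathcal{A}}_{k+S_{\mathfrak{l}},\mathscr{M},\varepsilon}/W_{(k+S_{\mathfrak{l}},n)}=1$ using the \emph{unprimed} family $\mathcal{E}_{k+S_{\mathfrak{l}},j}$, whose union over $j$ is the level set of $\mathfrak{W}$ (see \eqref{eq11181} and \eqref{eq319}), and only afterwards transfers to the primed set via \eqref{xianzai2}; you decompose the level set of $W^{\prime}$ directly into the $\mathcal{E}^{\prime}_{k+S_{\mathfrak{l}},m}$ of \eqref{eqA3511}, which is a legitimate and slightly more streamlined bookkeeping. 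Your exact bijection $W_{(k,n)}=P\bigl(k-\binom{n}{2}-nd,\,n\bigr)$ is also a clean substitute for the paper's estimate \eqref{eqwkn1}.

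Two slips should be repaired. First, the claim $\mathcal{E}^{\prime}_{k+S_{\mathfrak{l}},0}=\emptyset$ is false in general: membership in $\mathcal{R}_{\mathfrak{i}}$ as defined in \eqref{A78} requires the first coordinate of $\mathfrak{j}$ to be at least $d$, so when all $l_i>0$ a tuple $\mathfrak{i}\in W^{\prime}$ with $i_1$ close to $d$ can have $\mathcal{R}_{\mathfrak{i}}=\emptyset$ (for instance $n=2$, $\mathfrak{l}=(1,2)$, $d=5$, $\mathfrak{i}=(5,k-2)$: both candidates $(4,k-4)$ and $(3,k-3)$ violate $j_1\geq 5$), and \eqref{eqRHUA} cannot rescue this because tuples $\mathfrak{i}-\mathfrak{l}_{\pi}$ with first coordinate below $d$ are excluded from $\mathcal{R}_{\mathfrak{i}}$ by definition. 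The fix is trivial and is what the paper implicitly does (its error sum in \eqref{eq319} starts at $j=0$): include $m=0$ in your error sum; your own inclusion argument $\mathcal{E}^{\prime}_{k+S_{\mathfrak{l}},m}\subseteq\mathcal{A}_{k+S_{\mathfrak{l}}}$ works verbatim for $m=0$ since $0<\mathscr{M}$, so this term is also $O(k^{n-2})$ by Lemma \ref{lemma33}. Second, $W_{(k,n)}$ does not "agree for large $k$ with a polynomial of degree $n-1$": $P(\cdot,n)$ is only a quasi-polynomial (already $P(k,2)=\lfloor k/2\rfloor+1$), so it never coincides with a single polynomial. What you actually need, namely $k^{n-2}/W_{(k,n)}\to 0$ and $W_{(k+S_{\mathfrak{l}},n)}/W_{(k,n)}\to 1$, does follow from your bijection combined with the lower bound $P(k,n)\geq Q(k,n)/n!$ of \eqref{eq191}--\eqref{eq1911} and with Lemma \ref{lemma2.5}, so the conclusion stands once the justification is phrased this way.
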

\begin{proof}
 For $k > 0$ sufficiently large,
 it is evident that
\begin{equation}\label{eq11181}
\bigcup_{j=0}^{\mathscr{M} }  \mathcal{E}_{k+S_\mathfrak{l} ,j}  =\left \{ \mathfrak{j} \in \mathfrak{W} : \left | \mathfrak{j}\right |=k+S_\mathfrak{l}   \right \}.
\end{equation}
	Notice that for $j\in \left \{ 0,1,\cdots,\mathscr{M} \right \} $, $\mathcal{E}_{k+S_\mathfrak{l}, j}$ are mutually disjoint. Then by (\ref{eqA16}) and \eqref{eq11181}, we have
	\begin{equation}\label{eq319}
		\begin{aligned}
			^{\#  } \tilde{\mathcal{A}}_{k+S_\mathfrak{l},\mathscr{M},\varepsilon }
			  &= \ ^{\#  }\mathcal{E}_{k+S_\mathfrak{l},\mathscr{M}}   - \ ^{\#  } \check{\mathcal{A}}_{k+S_\mathfrak{l},\mathscr{M},\varepsilon }
			\\&=   \mathfrak{W}_{(k+S_\mathfrak{l},n)}- \sum\limits_{j=0}^{\mathscr{M}-1}\  ^{\#  }\mathcal{E}_{k+S_\mathfrak{l},j} - \ ^{\#  } \check{\mathcal{A}}_{k+S_\mathfrak{l},\mathscr{M},\varepsilon }.
		\end{aligned}
	\end{equation}
	We claim that
	\begin{equation} \label{xianzai1}\lim\limits_{k\to \infty} \frac{^{\#  } \tilde{\mathcal{A}}_{k+S_{\mathfrak{l}},\mathscr{M},\varepsilon }  }{W_{(k+S_{\mathfrak{l}},n)}}=1.\end{equation} 
	In fact, for
	every $0 \le j \le \mathscr{M}-1$, by \eqref{eqA3}, \eqref{eqA311} and Lemma \ref{lemma33}, \begin{equation} \label{eqwkn111}^{\#  } \mathcal{E}_{k+S_\mathfrak{l},j}   \le \ ^{\#  } \mathcal{A}_{k+S_\mathfrak{l}} \le M\left ( k+S_\mathfrak{l}\right ) ^{n-2}.\end{equation}
By \eqref{pkn}, \eqref{wdef} and \eqref{wkn}, it is easy to see that there exists a positive constant $C(n,d)$ depending only on $n$ and $d$ such that 
\begin{equation} \label{eqwkn1}
	|P(k,n)-W_{(k,n)}|\leq C(n,d) k^{n-2}.
\end{equation}
     Hence from \eqref{eq191}, \eqref{eq1911}, \eqref{eqwkn1}, \eqref{eqwkn111} and Lemma \ref{lemmaA.6}, for every $0 \le j \le \mathscr{M}-1$,
	\begin{equation} \label{111eqwkn1}\varlimsup_{k\to \infty} \frac{^{\#  } \mathcal{E}_{k+S_{\mathfrak{l}},j}  }{W_{(k+S_{\mathfrak{l}},n)}}=0, \quad
	\varlimsup_{k\to \infty} \frac{^{\#  } \check{\mathcal{A}}_{k+S_{\mathfrak{l}},\mathscr{M},\varepsilon }  }{W_{(k+S_{\mathfrak{l}},n)}} = 0.\end{equation}
Additionally, \eqref{eq191}, \eqref{eq1911}, \eqref{eqA034689} and \eqref{eqwkn1} give 
\begin{equation} \label{dengjia}\lim\limits_{k\to \infty} \frac{\mathfrak{W}_{(k+S_{\mathfrak{l}},n)}  }{W_{(k+S_{\mathfrak{l}},n)}} = 1.\end{equation}
Hence \eqref{eq319}, \eqref{111eqwkn1} and \eqref{dengjia}  yield \eqref{xianzai1}.
The claim is proved.

 Notice that $$\tilde{\mathcal{A}}_{k+S_\mathfrak{l},\mathscr{M},\varepsilon } \backslash \tilde{\mathcal{A}}_{k+S_\mathfrak{l},\mathscr{M},\varepsilon }^{\prime}=\left \{ \mathfrak{j}  \in \mathcal{E}_{k+S_\mathfrak{l},\mathscr{M} }\backslash \mathcal{E}_{k+S_\mathfrak{l},\mathscr{M} }^{\prime}:\Gamma_{j_{m},-l_{\pi  (m)}} > 1 - \varepsilon,  \forall 1 \leq m \leq n, \forall \, \pi \in \Sigma_{n}   \right \} $$
 and by \eqref{eqA0311189}, there exists a positive constant $C_1$ such that
 $$\left|^{\# } \mathcal{E}_{k+S_\mathfrak{l},\mathscr{M} }-^{\# } \mathcal{E}_{k+S_\mathfrak{l},\mathscr{M}}^{\prime}\right|\leq
 \mathfrak{W}_{(k,n)}-W^{\prime}_{(k,n)}\leq
 C_1k^{n-2}.$$
Consequently, we have
\begin{equation} \label{xianzai2}\left|^{\# } \tilde{\mathcal{A}}_{k+S_\mathfrak{l},\mathscr{M},\varepsilon }-^{\# } \tilde{\mathcal{A}}_{k+S_\mathfrak{l},\mathscr{M},\varepsilon }^{\prime}\right|\leq C_1k^{n-2}.\end{equation}
	Therefore, by \eqref{eq191}, \eqref{eq1911}, \eqref{eqwkn1}, \eqref{xianzai1} and \eqref{xianzai2},
	$$
\lim\limits_{k\to \infty}\frac{^{\#  }  \tilde{\mathcal{A} }_{k+S_{\mathfrak{l}},\mathscr{M},\varepsilon } ^{\prime} }{W_{_{(k+S_{\mathfrak{l}},n)}} }= 1.
$$
By \eqref{eqwkn1} and Lemma \ref{lemma2.5}, $$ 
\lim\limits_{k\to \infty}\frac {W_{_{(k+S_{\mathfrak{l}},n)}} }{W_{(k,n)} }= 1.
$$
	Thus,
		\begin{equation*}
			\lim\limits_{k\to \infty}\frac{^{\#  }  \tilde{\mathcal{A} }_{k+S_{\mathfrak{l}},\mathscr{M},\varepsilon }^{\prime}   }{W_{(k,n)} }= 1. \qedhere
		\end{equation*}
\end{proof}
The proof of the following lemma is similar to that of Lemma \ref{lem248}; for completeness, we list it here without providing the proof.
\begin{lem}\label{lemA248}
	For $k \geq 0$ and $d \leq i_1 < \cdots < i_n$ satisfying $|\mathfrak{i}|=k,$ we have that for $\mathfrak{j} \in \tilde{\mathcal{A}}_{k+S_{\mathfrak{l}},\mathscr{M},\varepsilon }(0 < \varepsilon < 1)$ 
	and $\pi \in \mathcal{N}_{\mathfrak{j}, \mathfrak{i}  },$
	it holds
	\begin{equation*}\label{eq25111} \left | \beta_{i_{a},l_{\pi  (a)}} \right | >\left ( 1 - \varepsilon \right ) ^{ \left | l_{\pi(a)} \right |   },\quad \forall 1 \le a \le n.\end{equation*}
\end{lem}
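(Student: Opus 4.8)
The plan is to transcribe the proof of Lemma~\ref{lem248} almost verbatim, since the definition of $\tilde{\mathcal{A}}_{k+S_\mathfrak{l},\mathscr{M},\varepsilon}$ imposes literally the same condition on $\Gamma_{j_m,-l_{\pi(m)}}$ as $\tilde{A}_{k+S_\mathfrak{l},\mathscr{M},\varepsilon}$ does. The one genuinely new ingredient is the passage from the wedge-product relation defining $\mathcal{N}_{\mathfrak{j},\mathfrak{i}}$ to a coordinatewise matching of indices, replacing the coset identity $\langle\mathfrak{i}+\mathfrak{l}_\pi\rangle=\langle\mathfrak{j}\rangle$ used in Lemma~\ref{lem248}. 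First I would fix $1\le a\le n$ and $\pi\in\mathcal{N}_{\mathfrak{j},\mathfrak{i}}$; by \eqref{eqA11} this means $e_{i_1+l_{\pi(1)}}\wedge\cdots\wedge e_{i_n+l_{\pi(n)}}=e_{j_1}\wedge\cdots\wedge e_{j_n}$. Since $\mathfrak{j}\in\tilde{\mathcal{A}}_{k+S_\mathfrak{l},\mathscr{M},\varepsilon}\subseteq\mathcal{E}_{k+S_\mathfrak{l},\mathscr{M}}\subseteq\mathfrak{W}$, the coordinates $j_1,\dots,j_n$ are pairwise distinct, so the right-hand side is a nonzero decomposable antisymmetric tensor; hence the indices $i_1+l_{\pi(1)},\dots,i_n+l_{\pi(n)}$ are pairwise distinct and agree, as an unordered set, with $\{j_1,\dots,j_n\}$. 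In particular there exists $1\le b\le n$ with $i_a+l_{\pi(a)}=j_b$, which is exactly the relation extracted in the first line of the proof of Lemma~\ref{lem248}.

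With this matching established, the rest of the argument is identical. I would choose $\tau_0\in\Sigma_n$ with $\tau_0(b)=\pi(a)$, so that $\Gamma_{i_a+l_{\pi(a)},-l_{\pi(a)}}=\Gamma_{j_b,-l_{\tau_0(b)}}$. The defining condition of $\tilde{\mathcal{A}}_{k+S_\mathfrak{l},\mathscr{M},\varepsilon}$, applied with $m=b$ and the permutation $\tau_0$, yields $\Gamma_{j_b,-l_{\tau_0(b)}}>1-\varepsilon$, exactly as in the step \eqref{eq3611}. Unwinding $\Gamma$ through \eqref{eq71} then shows that each of the $|l_{\pi(a)}|$ moduli $|\alpha_i|$ occurring in the minimum defining $\Gamma_{i_a+l_{\pi(a)},-l_{\pi(a)}}$ exceeds $1-\varepsilon$, and since $|\beta_{i_a,l_{\pi(a)}}|$ is precisely the product of those same $|l_{\pi(a)}|$ factors by \eqref{eq1}, one concludes $|\beta_{i_a,l_{\pi(a)}}|>(1-\varepsilon)^{|l_{\pi(a)}|}$, as in \eqref{eq27}. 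As $a$ was arbitrary, this gives the claim.

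I do not expect a serious obstacle: every estimate used here already occurs, in the symmetric setting, in Lemma~\ref{lem248}, and no new analytic input is needed. The only point requiring a little care is the very first step, where one must invoke the distinctness of $j_1,\dots,j_n$ (guaranteed by $\mathfrak{j}\in\mathfrak{W}$ together with nonvanishing of the wedge) in order to legitimately read off $i_a+l_{\pi(a)}=j_b$ from an equality of antisymmetric tensors, rather than from an equality of symmetric cosets as before. This is why the lemma is stated separately from Lemma~\ref{lem248} even though the resulting estimates coincide.
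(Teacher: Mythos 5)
Your proposal is correct and is exactly the argument the paper intends: the paper omits the proof of Lemma~\ref{lemA248}, stating only that it is ``similar to that of Lemma~\ref{lem248}'', and your transcription supplies precisely the one new ingredient needed, namely that since $\mathfrak{j}\in\mathcal{E}_{k+S_{\mathfrak{l}},\mathscr{M}}\subseteq\mathfrak{W}$ has pairwise distinct coordinates, the equality of nonzero wedge products in \eqref{eqA11} forces the index sets to coincide, yielding $i_a+l_{\pi(a)}=j_b$, after which the choice of $\tau_0$ and the unwinding of $\Gamma$ through \eqref{eq71} and \eqref{eq1} proceed verbatim as in \eqref{eq3611}--\eqref{eq27}.
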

For any $\mathfrak{i} \in \mathbb{N}^n$
	such that $|\mathfrak{i}|= k$ and $d\leq i_1 < \cdots < i_n$, let
	\begin{equation}\label{eqA2233}
		\Sigma_{n,\mathfrak{i}}^{\prime\prime}=\{\pi\in\Sigma_{n}: \mathfrak{i}+\mathfrak{l}_\pi \in \mathbb{N}^n, i_1+l_{\pi(1)} \neq \cdots \neq i_n+l_{\pi(n)} \}
	\end{equation}
	and \begin{equation}\label{eqA22}
		\mathcal{P}_{\mathfrak{i},k,t  }=\left \{ \pi\in\Sigma_{n}: \mathfrak{i}+\mathfrak{l}_\pi \in \mathcal{E}_{k+S_{\mathfrak{l}},t } \right \}, \quad 0 \leq t \leq \mathscr{M}.
	\end{equation}
	Obviously, 
	\begin{equation}\label{eqA2211} \mathcal{P}_{\mathfrak{i},k,0 }=\emptyset, \quad \Sigma_{n,\mathfrak{i}}^{\prime \prime}= \mathop{\bigsqcup}_{t=1}^{\mathscr{M}}\mathcal{P}_{\mathfrak{i},k,t}.\end{equation}
	Moreover, by the definitions of $\mathcal{P}_{\mathfrak{i},k,t  },$ $\mathcal{N}_{\mathfrak{j}, \mathfrak{i}  }$ and $\mathcal{E}_{k+S_{\mathfrak{l}},t }$  in \eqref{eqA22}, \eqref{eqA11} and \eqref{eqA3}, as well as Lemma \ref{lemma3.2},
	it is easy to see that for $t\geq 1,$
	\begin{eqnarray}\label{eq3.1}
		\mathcal{P}_{\mathfrak{i},k,t  }= \mathop{\bigcup}\limits_{\mathfrak{j}\in \mathcal{E}_{k+S_{\mathfrak{l}},t}} \mathcal{N}_{\mathfrak{j},\mathfrak{i}}=\left(\mathop{\bigcup}\limits_{\mathfrak{j}\in \mathcal{E}_{k+S_{\mathfrak{l}},t}\backslash
\mathcal{E}_{k+S_{\mathfrak{l}},t}^{\prime}} \mathcal{N}_{\mathfrak{j},\mathfrak{i}}\right)\bigsqcup \left(\bigsqcup\limits_{\mathfrak{j}\in \mathcal{E}_{k+S_{\mathfrak{l}},t}^{\prime}} \mathcal{N}_{\mathfrak{j},\mathfrak{i}}\right) .
	\end{eqnarray}
Write \begin{equation}\label{eqA221221}\Sigma_{n,\mathfrak{i}}^{\prime \prime\prime}=\left(\mathop{\bigsqcup}_{t=1}^{\mathscr{M}}\mathop{\bigcup}\limits_{\mathfrak{j}\in \mathcal{E}_{k+S_{\mathfrak{l}},t}\backslash
\mathcal{E}_{k+S_{\mathfrak{l}},t}^{\prime}} \mathcal{N}_{\mathfrak{j},\mathfrak{i}}\right).\end{equation}

Next, we are ready to  prove $(1)\Leftrightarrow (3)$ of Theorem \ref{thm1.1}.\\
$The~ proof ~of~ (1)\Leftrightarrow (3) ~of ~Theorem~ \ref{thm1.1}.$
	The proof is similar to that of $(1)\Leftrightarrow (2),$ and the only difference is the proof of Case 2 in $(1)\Rightarrow(2)$.
Assume $l_1,\cdots,l_n$ are not all equal, and let $\alpha_i \geq 0$ for $i\geq 0$ with $\lim\limits_{i\rightarrow \infty}\alpha_i=1.$ 
Similar to \eqref{eq2342343}, \eqref{eq23423438} and \eqref{eq2342345},
we need to prove for every fixed $d\geq 1,$ 
\begin{equation}\label{eqA20case}\left \|  S_{{\alpha,l_{1}}}\wedge S_{{\alpha},l_{2}}\wedge \cdots \wedge S_{{\alpha,l_{n}}}|_{H_{d-1}\odot \cdots \odot H_{d-1}}  \right \|  \ge 1.\end{equation}
			
		For $d \leq i_1 < \cdots < i_n$ with $|\mathfrak{i}|=k$, let $a_{\mathfrak{i}}=\frac{1}{\sqrt{W_{(k,n)}} }$. Obviously $\sum\limits\limits_{d \leq i_1 < \cdots < i_n, |\mathfrak{i}|=k} a_{\mathfrak{i}} \frac{ e_{i_1} \wedge \cdots \wedge e_{i_n} }{\left \|  e_{i_1} \wedge \cdots \wedge e_{i_n}  \right \| }$ is a unit vector.
		To calculate the norm of $S_{{\alpha,l_{1}}}\wedge S_{{\alpha,l_{2}}}\wedge \cdots \wedge S_{{\alpha,l_{n}}}|_{H_{d-1}\odot \cdots \odot H_{d-1}}$, we will estimate the norms of $$S_{{\alpha,l_{1}}}\wedge S_{{\alpha,l_{2}}}\wedge \cdots \wedge S_{{\alpha,l_{n}}}\left(\sum\limits_{d \leq i_1 < \cdots < i_n, |\mathfrak{i}|=k}a_{\mathfrak{i}} \frac{ e_{i_1} \wedge \cdots \wedge e_{i_n} }{\left \|  e_{i_1} \wedge \cdots \wedge e_{i_n}  \right \| }\right).$$ 
		By the definition of $\Sigma_{n,\mathfrak{i}}^{\prime \prime}$ in \eqref{eqA2233}, it is easy to see that
			for $\pi \in \Sigma_{n}\backslash \Sigma_{n,\mathfrak{i}}^{\prime \prime},$ 
 $$ e_{i_1+l_{\pi(1)}} \wedge \cdots \wedge e_{i_n+l_{\pi(n)}}=0.$$ Hence by Lemma \ref{lemA2.1},  \eqref{eqA2211}, \eqref{eq3.1}, \eqref{eqA221221} and \eqref{eqA16}, for $k>0$ large enough, we have
\begin{equation}\label{eq324}
			\begin{aligned}
				&	\left (S_{{\alpha,l_{1}}}\wedge S_{{\alpha,l_{2}}}\wedge \cdots \wedge S_{{\alpha,l_{n}}} \right )
				\sum\limits_{d \leq i_1 < \cdots < i_n, |\mathfrak{i}|=k}a_{\mathfrak{i}} \frac{ e_{i_1} \wedge \cdots \wedge e_{i_n} }{\left \|  e_{i_1} \wedge \cdots \wedge e_{i_n}  \right \| }
				\\ =& \sum\limits_{d \leq i_1 < \cdots < i_n, |\mathfrak{i}|=k}\frac{a_{{\mathfrak{i}}}}{n!}\sum\limits_{\pi  \in \Sigma_{n,\mathfrak{i}}^{\prime \prime} }\frac{ \beta_{i_{1},l_{\pi  (1)}} \cdots\beta_{i_{n},l_{\pi  (n)}}e_{i_1+l_{\pi(1)}} \wedge \cdots \wedge e_{i_n+l_{\pi(n)}}}{\left \|  e_{i_1} \wedge \cdots \wedge e_{i_n}  \right \| }
				\\=&
\sum\limits_{d \leq i_1 < \cdots < i_n, |\mathfrak{i}|=k}\frac{a_{{\mathfrak{i}}}}{n!}\sum\limits_{\pi  \in \Sigma_{n,\mathfrak{i}}^{\prime \prime\prime} }\frac{ \beta_{i_{1},l_{\pi  (1)}} \cdots\beta_{i_{n},l_{\pi  (n)}}e_{i_1+l_{\pi(1)}} \wedge \cdots \wedge e_{i_n+l_{\pi(n)}}}{\left \|  e_{i_1} \wedge \cdots \wedge e_{i_n}  \right \| }
\\  & ~ +\sum\limits_{t=1}^{\mathscr{M}}\sum\limits_{ \mathfrak{j} \in \mathcal{E}_{k+S_{\mathfrak{l}},t}^{\prime} }
				\left ( \sum\limits_{d \leq i_1 < \cdots < i_n, |\mathfrak{i}|=k}
				\sum\limits_{\pi \in \mathcal{N}_{\mathfrak{j}, \mathfrak{i}  } }\frac{ a_{\mathfrak{i}}		\beta_{i_{1},l_{\pi  (1)}} \cdots\beta_{i_{n},l_{\pi  (n)}}}{n!\left \|e_{i_1} \wedge \cdots \wedge e_{i_n} \right \|} \right )
				e_{j_1} \wedge \cdots \wedge e_{j_n}  
				\\ =& \sum\limits_{d \leq i_1 < \cdots < i_n, |\mathfrak{i}|=k}\frac{a_{{\mathfrak{i}}}}{n!}\sum\limits_{\pi  \in \Sigma_{n,\mathfrak{i}}^{\prime \prime\prime} }\frac{ \beta_{i_{1},l_{\pi  (1)}} \cdots\beta_{i_{n},l_{\pi  (n)}}e_{i_1+l_{\pi(1)}} \wedge \cdots \wedge e_{i_n+l_{\pi(n)}}}{\left \|  e_{i_1} \wedge \cdots \wedge e_{i_n}  \right \| }\\& ~ +
 \sum\limits_{\mathfrak{j}\in \tilde{\mathcal{A}}_{k+S_{\mathfrak{l}},\mathscr{M},\varepsilon }^{\prime}}
				\left ( \sum\limits_{d \leq i_1 < \cdots < i_n, |\mathfrak{i}|=k}
				\sum\limits_{\pi \in \mathcal{N}_{\mathfrak{j}, \mathfrak{i}  } }\frac{a_{\mathfrak{i}}				\beta_{i_{1},l_{\pi  (1)}}\cdots\beta_{i_{n},l_{\pi  (n)}}}{n!\left \| e_{i_1} \wedge \cdots \wedge e_{i_n} \right \|} \right )
				e_{j_1} \wedge \cdots \wedge e_{j_n}
				\\& ~ +\sum\limits_{\mathfrak{j}\in\check{\mathcal{A}}  _{k+S_{\mathfrak{l}},\mathscr{M},\varepsilon }^{\prime}  }      \left (\sum\limits_{d \leq i_1 < \cdots < i_n, |\mathfrak{i}|=k}
				\sum\limits_{\pi \in \mathcal{N}_{\mathfrak{j}, \mathfrak{i}  } }\frac{a_{\mathfrak{i}}		 	\beta_{i_{1},l_{\pi  (1)}} \cdots\beta_{i_{n},l_{\pi  (n)}}}{n!\left \| e_{i_1} \wedge \cdots \wedge e_{i_n} \right \|} \right )
				e_{j_1} \wedge \cdots \wedge e_{j_n}
				\\  & ~ + \sum\limits_{t=1}^{\mathscr{M}-1}\sum\limits_{\mathfrak{j} \in \mathcal{E}_{k+S_{\mathfrak{l}},t}^{\prime}  }
				\left (\sum\limits_{d \leq i_1 < \cdots < i_n, |\mathfrak{i}|=k}
				\sum\limits_{\pi \in \mathcal{N}_{\mathfrak{j}, \mathfrak{i}  } }\frac{a_{\mathfrak{i}}				\beta_{i_{1},l_{\pi  (1)}} \cdots\beta_{i_{n},l_{\pi  (n)}}}{n!\left \| e_{i_1} \wedge \cdots \wedge e_{i_n} \right \|} \right )
				e_{j_1} \wedge \cdots \wedge e_{j_n}.
			\end{aligned}
		\end{equation}
From Lemma \ref{lemma3.2} and the definitions of $\Sigma_{n,\mathfrak{i}}^{\prime \prime\prime},$ $\tilde{\mathcal{A}}_{k+S_\mathfrak{l},\mathscr{M},\varepsilon }^{\prime},$ $\check{\mathcal{A}}  _{k+S_\mathfrak{l},\mathscr{M},\varepsilon }^{\prime}$ and $\mathcal{E}_{k+S_\mathfrak{l},t }^{\prime},$ the last four terms in the above expression are mutually orthogonal.
		Moreover, for $\mathfrak{j} \in \mathcal{E}_{k+S_{\mathfrak{l}},\mathscr{M}} $, by (\ref{eqA3}), it is easy to see that
		\begin{equation}\label{eqA25}
			\left \| e_{j_1} \wedge \cdots \wedge e_{j_n} \right \| =\frac{1}{\sqrt{n!} }.
		\end{equation}
	It follows from (\ref{eq324}) and (\ref{eqA25}) that 
		\begin{equation}\label{eq2511}  \begin{aligned}
				&	\left \| \left (S_{{\alpha,l_{1}}}\wedge S_{{\alpha,l_{2}}}\wedge \cdots \wedge S_{{\alpha,l_{n}}}  \right )
				\sum\limits_{d \leq i_1 < \cdots < i_n, |\mathfrak{i}|=k}a_{ \mathfrak{i}} \frac{ e_{i_1} \wedge \cdots \wedge e_{i_n} }{\left \|  e_{i_1} \wedge \cdots \wedge e_{i_n}  \right \| }       \right \| ^{2}
				\\\ge& \left \| \sum\limits_{\mathfrak{j}\in \tilde{\mathcal{A}}_{k+S_{\mathfrak{l}},\mathscr{M},\varepsilon }^{\prime}}
				\left ( \sum\limits_{d \leq i_1 < \cdots < i_n, |\mathfrak{i}|=k}
				\sum\limits_{\pi \in \mathcal{N}_{\mathfrak{j}, \mathfrak{i}  } } \frac{a_{ \mathfrak{i}}		\beta_{i_{1},l_{\pi  (1)}}\cdots\beta_{i_{n},l_{\pi  (n)}}}{n!\left \| e_{i_1} \wedge \cdots \wedge e_{i_n} \right \|} \right )
				e_{j_1} \wedge \cdots \wedge e_{j_n}   \right \| ^{2}
				\\= &\sum\limits_{\mathfrak{j}\in \tilde{\mathcal{A}}_{k+S_{\mathfrak{l}},\mathscr{M},\varepsilon }^{\prime}} \left( \sum\limits_{d \leq i_1 < \cdots < i_n, |\mathfrak{i}|=k}
				\sum\limits_{\pi \in \mathcal{N}_{\mathfrak{j}, \mathfrak{i}  } }\frac{a_{ \mathfrak{i}}		\beta_{i_{1},l_{\pi  (1)}} \cdots\beta_{i_{n},l_{\pi  (n)}} }{n!\left \| e_{i_1} \wedge \cdots \wedge e_{i_n} \right \|} \right ) ^{2}
				\left \| e_{j_1} \wedge \cdots \wedge e_{j_n}  \right \| ^{2}
				\\= &\frac{1}{n!}\sum\limits_{\mathfrak{j}\in \tilde{\mathcal{A}}_{k+S_{\mathfrak{l}},\mathscr{M},\varepsilon }^{\prime}}
				\left( \sum\limits_{d \leq i_1 < \cdots < i_n, |\mathfrak{i}|=k}
				\sum\limits_{\pi \in \mathcal{N}_{\mathfrak{j}, \mathfrak{i}  } }\frac{a_{ \mathfrak{i}}		\beta_{i_{1},l_{\pi  (1)}} \cdots\beta_{i_{n},l_{\pi  (n)}} }{n!\left \| e_{i_1} \wedge \cdots \wedge e_{i_n} \right \|} \right ) ^{2}\\ \ge&  \frac{^{\#  } \tilde{\mathcal{A}}_{k+S_{\mathfrak{l}},\mathscr{M},\varepsilon }^{\prime} }{W_{(k,n)} } \left ( 1 - \varepsilon \right ) ^{2\left ( \left | l_{1} \right |  + \cdots +\left | l_{n} \right | \right ) },
		\end{aligned}	\end{equation}
		where the last inequality comes from \eqref{eqA34} and Lemma \ref{lemA248}, together with the fact that $ \mathfrak{i} \in \mathcal{R}_{\mathfrak{j} }$ if and only if $\mathcal{N}_{\mathfrak{j},\mathfrak{i} }  \neq \emptyset.$
		Thus, by Lemma \ref{lemA2.7}, \eqref{eq2511}, 
		 and the arbitrariness of $\varepsilon$, we conclude that \eqref{eqA20case} holds.
\qed

\section{Lower bound of the norm for symmetric tensor product}

This section is devoted to  solving \cite[Problem 1 and Problem 2]{GA}. At first, recall that \cite[Problem 1 and Problem 2]{GA} are stated  as follows.
\begin{prob}(\cite[Problem 1]{GA})
For $x_{1},x_{2},\cdots x_{n} \in \mathcal{H}$, 	is		$$\frac{1}{\sqrt{n!} } \left \| x_{1}    \right \| \left \| x_{2}     \right \|\cdots \left \| x_{n}     \right \| \le \left \| x_{1}\odot x_{2}\odot \cdots \odot x_{n}  \right \|? $$
	
\end{prob}
\begin{prob}(\cite[Problem 2]{GA})
	For $\mathbb{A}_{1},\mathbb{A}_{2},\cdots,\mathbb{A}_{n} \in \mathcal{ B(H)}$, is
	\begin{center}
		$\frac{1}{\sqrt{n!} } \sup\limits_{x\in H \atop \left \| x \right \| =1} \left \{ \left \| \mathbb{A}_{1}x \right \| \left \| \mathbb{A}_{2}x \right \| \cdots \left \| \mathbb{A}_{n}x \right \|  \right \} \le\left \| \mathbb{A}_{1}\odot \mathbb{A}_{2}\odot \cdots \odot \mathbb{A}_{n}    \right \|  ?$
	\end{center}
\end{prob}
The following two propositions answer the above problems affirmatively.
\begin{prop}\label{prop3.1}
	For $x_{1},x_{2},\cdots x_{n} \in \mathcal{H} $,
	\begin{center}
		$\frac{1}{\sqrt{n!} } \left \| x_{1}    \right \| \left \| x_{2}     \right \|\cdots \left \| x_{n}     \right \| \le \left \| x_{1}\odot x_{2}\odot \cdots \odot x_{n}  \right \| $.
	\end{center}
\end{prop}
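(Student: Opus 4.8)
The plan is to prove the inequality by induction on $n$, reducing the whole statement to a clean one-step estimate in which the "loss factor" is exactly $\tfrac1{\sqrt n}$. The base case $n=1$ is the trivial equality, and if some $x_i=0$ both sides vanish, so assume all $x_i\neq 0$. For the inductive step set $z=x_2\odot\cdots\odot x_n\in\mathcal H^{\odot(n-1)}$ and let $P$ denote the orthogonal projection of $\mathcal H^{\otimes n}$ onto $\mathcal H^{\odot n}$. Since $z$ is already symmetric, the symmetrization $P$ absorbs the inner symmetrization, so that $x_1\odot x_2\odot\cdots\odot x_n=P(x_1\otimes z)$ and, because the stabilizer $\{1\}\times\Sigma_{n-1}$ fixes $x_1\otimes z$,
\[
P(x_1\otimes z)=\frac1n\sum_{k=1}^{n}\tau_k(x_1\otimes z),
\]
where $\tau_k(x_1\otimes z)$ denotes the tensor with $x_1$ in the $k$-th slot and $z$ filling the remaining $n-1$ slots in order. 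Thus it suffices to prove the one-step lemma $\|P(x_1\otimes z)\|\ge \tfrac1{\sqrt n}\|x_1\|\,\|z\|$, for then the induction hypothesis $\|z\|\ge \tfrac1{\sqrt{(n-1)!}}\prod_{i\ge2}\|x_i\|$ gives $\|x_1\odot\cdots\odot x_n\|\ge \tfrac1{\sqrt n}\cdot\tfrac1{\sqrt{(n-1)!}}\prod_{i}\|x_i\|=\tfrac1{\sqrt{n!}}\prod_i\|x_i\|$, as desired.

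To prove the one-step lemma I would expand $\|P(x_1\otimes z)\|^2=\tfrac1{n^2}\sum_{k,l}\langle \tau_k(x_1\otimes z),\tau_l(x_1\otimes z)\rangle$. The $n$ diagonal terms ($k=l$) each equal $\|x_1\|^2\|z\|^2$, contributing $\tfrac1n\|x_1\|^2\|z\|^2$. By the symmetry of $z$, the $n(n-1)$ off-diagonal terms are all equal to a common value $\Theta:=\langle \tau_1(x_1\otimes z),\tau_2(x_1\otimes z)\rangle$, which is real (being equal to its own conjugate). The crux is to show $\Theta\ge 0$ by recognizing it as a squared norm: writing $z=\sum_\mu v_1^\mu\otimes\cdots\otimes v_{n-1}^\mu$, matching slots gives $\Theta=\sum_{\mu,\nu}\langle v_1^\mu,x_1\rangle\langle x_1,v_1^\nu\rangle\prod_{r=2}^{n-1}\langle v_r^\mu,v_r^\nu\rangle$, which is precisely $\|\iota_{x_1}z\|^2$ for the contraction $\iota_{x_1}z=\sum_\mu\langle x_1,v_1^\mu\rangle\, v_2^\mu\otimes\cdots\otimes v_{n-1}^\mu\in\mathcal H^{\otimes(n-2)}$ (well-defined and slot-independent since $z$ is symmetric). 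Hence $\|P(x_1\otimes z)\|^2=\tfrac1n\|x_1\|^2\|z\|^2+\tfrac{n-1}{n}\Theta\ge \tfrac1n\|x_1\|^2\|z\|^2$, completing the lemma.

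The main obstacle, and the only nontrivial point, is establishing the identity $\Theta=\|\iota_{x_1}z\|^2$: one must carefully track conjugate-linearity in the inner product when comparing the configuration with $x_1$ in slot $1$ against the one with $x_1$ in slot $2$, and use the symmetry of $z$ to see that contracting $x_1$ out of each copy and pairing the leftover $(n-2)$-tensors with themselves reproduces exactly the squared norm of a single contraction. (As a consistency check, for $n=2$ this reads $\Theta=|\langle x_1,x_2\rangle|^2$ and recovers the known bound $\|x_1\odot x_2\|\ge\tfrac1{\sqrt2}\|x_1\|\|x_2\|$.) Conceptually, this argument is a self-contained proof of the permanental inequality underlying the proposition: since $\|x_1\odot\cdots\odot x_n\|^2=\tfrac1{n!}\operatorname{perm}\big(\langle x_i,x_j\rangle\big)$, the claim is equivalent to $\operatorname{perm}(G)\ge\prod_i G_{ii}$ for the positive semidefinite Gram matrix $G$, and the nonnegativity of the overlap $\Theta$ is exactly the positivity needed to push the diagonal term through the induction.
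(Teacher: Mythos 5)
Your proof is correct in substance and takes a genuinely different route from the paper's — in fact it repairs a real defect in the paper's argument. The paper expands each $x_i$ in an orthonormal basis and asserts the Pythagorean identity
$\bigl\|\sum_{i_1,\dots,i_n} a_{1,i_1}\cdots a_{n,i_n}\, e_{i_1}\odot\cdots\odot e_{i_n}\bigr\|^2=\sum_{i_1,\dots,i_n}|a_{1,i_1}\cdots a_{n,i_n}|^2\,\bigl\| e_{i_1}\odot\cdots\odot e_{i_n}\bigr\|^2$,
and then invokes $\|e_{i_1}\odot\cdots\odot e_{i_n}\|^2\ge 1/n!$. But that identity is false in general: index tuples that are permutations of one another label the \emph{same} symmetric tensor, so those terms are parallel rather than orthogonal, and the surviving cross terms can be negative. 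Concretely, for $x_1=(e_1+e_2)/\sqrt2$, $x_2=(e_1-e_2)/\sqrt2$ one has $\|x_1\odot x_2\|^2=\tfrac12$, while the paper's intermediate expression equals $\tfrac34$ (the two cross terms contribute $-\tfrac14$); the paper's chain of (in)equalities therefore does not establish the bound. Your scheme — writing $x_1\odot\cdots\odot x_n=P(x_1\otimes z)$ with $z=x_2\odot\cdots\odot x_n$, decomposing $P(x_1\otimes z)=\tfrac1n\sum_{k}\tau_k(x_1\otimes z)$ along cosets of the stabilizer, splitting $\|P(x_1\otimes z)\|^2=\tfrac1n\|x_1\|^2\|z\|^2+\tfrac{n-1}{n}\Theta$, and proving $\Theta\ge 0$ — deals head-on with exactly the cross terms the paper ignores, and the induction then loses only the factor $1/\sqrt n$ per step, as needed. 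The coset decomposition, the equality and realness of all off-diagonal terms, and the induction bookkeeping are all sound.

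One correction is needed in the crux step, precisely at the conjugation issue you flagged. Your displayed formula $\Theta=\sum_{\mu,\nu}\langle v_1^\mu,x_1\rangle\langle x_1,v_1^\nu\rangle\prod_{r=2}^{n-1}\langle v_r^\mu,v_r^\nu\rangle$ is right, but it is the squared norm of the contraction whose coefficients are $\langle v_1^\mu,x_1\rangle$, namely of $w=\sum_\mu\langle v_1^\mu,x_1\rangle\,v_2^\mu\otimes\cdots\otimes v_{n-1}^\mu$, since $\|w\|^2=\sum_{\mu,\nu}\langle v_1^\mu,x_1\rangle\overline{\langle v_1^\nu,x_1\rangle}\prod_{r\ge 2}\langle v_r^\mu,v_r^\nu\rangle=\Theta$. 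With your definition $\iota_{x_1}z=\sum_\mu\langle x_1,v_1^\mu\rangle\,v_2^\mu\otimes\cdots\otimes v_{n-1}^\mu$ the scalar prefactors come out conjugated, and over a complex Hilbert space $\|\iota_{x_1}z\|^2$ need not equal $\Theta$. So replace $\langle x_1,v_1^\mu\rangle$ by $\langle v_1^\mu,x_1\rangle$; alternatively, expand $z=\sum_j f_j\otimes z_j$ against an orthonormal basis $\{f_j\}$, which gives $\Theta=\bigl\|\sum_j\langle f_j,x_1\rangle z_j\bigr\|^2\ge 0$ directly and also settles convergence questions for a general element $z$ of the Hilbert-space tensor product (your elementary-decomposition argument strictly applies to finite sums and extends by continuity). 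With that one-line fix your proof is complete.
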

\begin{proof}
	Let $\left \{ e_{i} \right \} _{i=1}^{\infty } $ be an orthonormal basis for $\mathcal{H}$. For every $i=1,2,\cdots,n$, write $x_{i}=\sum\limits_{j=0}^{\infty } a_{i,j} e_{j} $, where $\sum\limits_{j=0}^{\infty } \left | a_{i,j} \right | ^{2} <\infty $. Obviously
	\begin{center}
		$\left \| x_{i}  \right \| ^{2} =\sum\limits_{j=0}^{\infty } \left | a_{i,j} \right | ^{2} $.	
	\end{center}
	Since
	$$  \begin{aligned}
		\left \| x_{1}\odot x_{2}\odot \cdots \odot x_{n}  \right \|^{2}  & = \left \| \sum\limits_{j=0}^{\infty } a_{1,j}e_{j} \odot \sum\limits_{j=0}^{\infty } a_{2,j}e_{j} \odot \cdots \odot \sum\limits_{j=0}^{\infty } a_{n,j}e_{j}  \right \| ^{2}
		\\   & = \left \| \sum\limits_{i_{1}=0}^{\infty} \sum\limits_{i_{2}=0}^{\infty}\cdots \sum\limits_{i_{n}=0}^{\infty} a_{1,i_{1}} a_{2,i_{2}} \cdots a_{n,i_{n}} e_{i_{1} }\odot e_{i_{2} } \odot \cdots \odot e_{i_{n} }   \right \| ^{2}
		\\   & = \sum_{i_{1},i_{2},\cdots,i_{n} \ge 0 }\left | a_{1,i_{1} } a_{2,i_{2} }\cdots a_{n,i_{n} }\right | ^{2}\left \| e_{i_{1} }\odot e_{i_{2} } \odot \cdots \odot e_{i_{n} } \right \| ^{2}
	\end{aligned}$$
	and
	\begin{center}
		$\left \| e_{i_{1} }\odot e_{i_{2} } \odot \cdots \odot e_{i_{n} } \right \|^{2}  \ge \frac{1}{n!} $,
	\end{center}
	we have
	\begin{equation*}
			\left \| x_{1}\odot x_{2}\odot \cdots \odot x_{n}  \right \|^{2}   \ge \frac{1}{n!}\prod\limits_{j=1}^{n}  \sum\limits_{i_{j}=0}^{\infty} \left | a_{j,i_{j} }  \right | ^{2}  = \frac{1}{n!}\prod\limits_{j=1}^{n} \left \| x_{j}    \right \|^{2}. \qedhere
	\end{equation*}
\end{proof}
\begin{prop}
	For $\mathbb{A}_{1},\mathbb{A}_{2},\cdots,\mathbb{A}_{n}
	\in \mathcal{ B(H)}$,
	\begin{equation*}\label{eqsharp}
		\frac{1}{\sqrt{n!} } \sup\limits_{x\in \mathcal{H} \atop \left \| x \right \| =1} \left \{ \left \| \mathbb{A}_{1}x \right \| \left \| \mathbb{A}_{2}x \right \| \cdots \left \| \mathbb{A}_{n}x \right \|  \right \} \le\left \| \mathbb{A}_{1}\odot \mathbb{A}_{2}\odot \cdots \odot \mathbb{A}_{n}    \right \|  .
	\end{equation*}
\end{prop}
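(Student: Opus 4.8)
The plan is to reduce this statement to Proposition \ref{prop3.1} by testing the operator $\mathbb{A}_1 \odot \cdots \odot \mathbb{A}_n$ against the ``diagonal'' symmetric tensor $x \odot \cdots \odot x$ built from an arbitrary unit vector $x \in \mathcal{H}$. First I would fix a unit vector $x$ and form $x^{\odot n} = x \odot \cdots \odot x \in \mathcal{H}^{\odot n}$. Since $x^{\otimes n}$ is already symmetric, it coincides with its symmetrization, so $x^{\odot n}=x^{\otimes n}$ and hence $\left\|x^{\odot n}\right\| = \|x\|^{n} = 1$.

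The central computation is the identity
$$\left(\mathbb{A}_1 \odot \cdots \odot \mathbb{A}_n\right)\left(x^{\odot n}\right) = \left(\mathbb{A}_1 x\right) \odot \cdots \odot \left(\mathbb{A}_n x\right).$$
This follows directly from the definition of $\mathrm{S}_n(\mathbb{A}_1,\ldots,\mathbb{A}_n)$ given in the introduction: applying $\frac{1}{n!}\sum_{\pi \in \Sigma_n}\mathbb{A}_{\pi(1)}\otimes \cdots \otimes \mathbb{A}_{\pi(n)}$ to $x^{\otimes n}$ produces $\frac{1}{n!}\sum_{\pi \in \Sigma_n}\left(\mathbb{A}_{\pi(1)} x\right)\otimes \cdots \otimes \left(\mathbb{A}_{\pi(n)} x\right)$, which is precisely the symmetrization of $\mathbb{A}_1 x \otimes \cdots \otimes \mathbb{A}_n x$, i.e.\ the symmetric tensor $\left(\mathbb{A}_1 x\right)\odot \cdots \odot \left(\mathbb{A}_n x\right)$.

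With this identity in hand the estimate is immediate. From the definition of the operator norm and $\left\|x^{\odot n}\right\|=1$,
$$\left\|\mathbb{A}_1 \odot \cdots \odot \mathbb{A}_n\right\| \ge \left\|\left(\mathbb{A}_1 \odot \cdots \odot \mathbb{A}_n\right)\left(x^{\odot n}\right)\right\| = \left\|\left(\mathbb{A}_1 x\right) \odot \cdots \odot \left(\mathbb{A}_n x\right)\right\|,$$
and then Proposition \ref{prop3.1}, applied to the vectors $\mathbb{A}_1 x,\ldots,\mathbb{A}_n x \in \mathcal{H}$, gives
$$\left\|\left(\mathbb{A}_1 x\right) \odot \cdots \odot \left(\mathbb{A}_n x\right)\right\| \ge \frac{1}{\sqrt{n!}}\left\|\mathbb{A}_1 x\right\|\cdots\left\|\mathbb{A}_n x\right\|.$$
Taking the supremum over all unit vectors $x$ finishes the argument. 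The only delicate point is the verification of the displayed identity, which rests on matching the normalization conventions for $\odot$ on vectors with that for $\odot$ on operators; once $x^{\otimes n}=x^{\odot n}$ and the symmetrization bookkeeping are settled, there is no substantive analytic difficulty, and in particular no need to compute any norm of $e_{i_1}\odot\cdots\odot e_{i_n}$ beyond what Proposition \ref{prop3.1} already supplies.
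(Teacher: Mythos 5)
Your proposal is correct and follows essentially the same route as the paper: the paper's proof also tests $\mathbb{A}_1\odot\cdots\odot\mathbb{A}_n$ against the unit vector $x\otimes x\otimes\cdots\otimes x$ (which, as you note, equals $x\odot\cdots\odot x$), uses the identity $\left(\mathbb{A}_1\odot\cdots\odot\mathbb{A}_n\right)\left(x^{\otimes n}\right)=\mathbb{A}_1x\odot\cdots\odot\mathbb{A}_nx$, and then invokes Proposition \ref{prop3.1} before taking the supremum. Your write-up merely makes explicit the symmetrization bookkeeping that the paper leaves implicit.
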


\begin{proof}
	By Propositon \ref{prop3.1},
		\begin{align*}
		\left \| \mathbb{A}_{1}\odot \mathbb{A}_{2}\odot \cdots \odot \mathbb{A}_{n}    \right \|
		& \ge  \sup_{x\in \mathcal{H} \atop \left \| x \right \| =1}\left \| \left ( \mathbb{A}_{1}\odot \mathbb{A}_{2}\odot \cdots \odot \mathbb{A}_{n}  \right ) \left ( x\otimes x\otimes \cdots \otimes x  \right )  \right \|
		\\ & =\sup_{x\in \mathcal{H} \atop \left \| x \right \| =1}\left \| \mathbb{A}_{1}x\odot \mathbb{A}_{2}x\odot \cdots \odot \mathbb{A}_{n}x   \right \|
		\\ & \ge \frac{1}{\sqrt{n!} } \sup_{x\in \mathcal{H} \atop \left \| x \right \| =1} \left \{ \left \| \mathbb{A}_{1}x \right \| \left \| \mathbb{A}_{2}x \right \| \cdots \left \| \mathbb{A}_{n}x \right \|  \right \}.  \qedhere
	\end{align*}
\end{proof}

\noindent\textbf{Acknowledgement.} This work is supported by National Science Foundation of China (No. 11871308, 12271298) and the Taishan Scholars Project (TSTP20250702). The authors thank the referee for helpful suggestions, which make this paper more readable.

  \noindent{Xiance Tian, School of Mathematics, Shandong University, Jinan 250100, Shandong, P. R. China, Email: 202311798@mail.sdu.edu.cn}

  \noindent{Penghui Wang, School of Mathematics, Shandong University, Jinan 250100, Shandong, P. R. China, Email: phwang@sdu.edu.cn}

  \noindent{Zeyou Zhu, School of Mathematics, Shandong University, Jinan 250100, Shandong, P. R. China, Email: 13155486329@163.com}

\end{document}